\documentclass[11pt]{article}
\usepackage{amsmath,amsthm,amsfonts,latexsym,amssymb,enumerate,color}
\usepackage{graphicx}
\usepackage{mathrsfs}

\newtheorem{thm}{Theorem}[section]
\newtheorem{cor}[thm]{Corollary}
\newtheorem{prop}[thm]{Proposition}

\newtheorem{rem}[thm]{Remark}

\def\CC{\mathbb C}

\def\DD{\mathbb D}

\def\G{\mathcal G}

\def\GN{\mathcal N}

\def\GR{\mathcal R}
\def\RR{\mathbb R}
\def\TT{\mathbb T}

\def\ZZ{\mathbb Z}
\def\beginpf{\begin{proof}}
\def\endpf{\end{proof}}
\def\beq{\begin{equation}}
\def\eeq{\end{equation}}
\def\ol{\overline}
\def\til{\tilde}

\def\wt{\widetilde}

\def\wX{\widetilde X}
\def\wY{\widetilde Y}
\def\ker{\mathop{\rm ker}\nolimits}
\def\ran{\mathop{\rm ran}\nolimits}
\newcommand{\simast}{%
    \ensuremath{%
       \stackrel{\mathsf{\ast}}{\sim}}}
\def\spam{\mathop{\rm span}\nolimits}   

\def\ind{\mathop{\rm ind}\nolimits}

\def\CP{{\rm CP}}
\def\GCD{{\rm GCD}}
\def\essinf{\mathop{\rm ess \, inf}}
\def\clos{\mathop{\rm clos}\nolimits}

\begin{document}

\title{Equivalence after extension for general Toeplitz operators}
\author{M.~Cristina C\^amara\thanks{Center for Mathematical Analysis, Geometry and Dynamical Systems,
Instituto Superior T\'ecnico, Universidade de Lisboa, 
Av. Rovisco Pais,  1049-001 Lisboa, Portugal.   {\tt cristina.camara@tecnico.ulisboa.pt}}\quad  and\quad
Jonathan R.~Partington\thanks{School of Mathematics, University of Leeds, Leeds LS2~9JT, U.K. {\tt j.r.partington@leeds.ac.uk}}}

\date{}

\maketitle
 
\centerline{\em In memory of Franciszek Hugon Szafraniec}

\medskip

\begin{abstract}
This paper reviews the subject of equivalence after extension in the context of Toeplitz operators,
truncated Toeplitz operators, and further generalizations including dual and multiband truncated Toeplitz
operators. Paired operators are shown to play a key role here. Properties under investigation
by these methods
include the invertibility and Fredholm properties of generalized Toeplitz operators, with
new results showing how the kernels of such operators can often be fully described
given only limited information.
 \end{abstract}

\noindent Keywords:
Equivalence after extension, Toeplitz operator, truncated Toeplitz operator, paired operator, dual  truncated Toeplitz operator, multiband truncated Toeplitz operator, model space, nearly invariant subspace, corona pair

\section{Introduction}

In this article we revisit the concept of Equivalence After Extension (EAE)
for operators on Banach spaces --   in this case, Hilbert spaces -- and its relations with various types of compressions of multiplication operator (which we call {\em general Toeplitz operators\/}).
We explain how these relations allow us not only to study various
properties that may be difficult to see otherwise, but also to obtain a better understanding of the concept
itself. In addition, we note how natural isomorphisms of kernels can suggest EAE, and we obtain new
insights and new properties not previously mentioned in the literature.

Let $X,\wX,Y,\wY$ be Banach spaces and let $T:X \to \wX$ and $S: Y \to \wY$ be bounded operators. We say that $T$ and $S$ are {\em equivalent\/} operators if and only if there exist operators $E$ and $F$, invertible, such that
\[
T=ESF \qquad (\hbox{written} \quad T \sim S).
\]
If $T$ and $S$ are equivalent, then they are simultaneously the zero operator or not,
invertible or not, Fredholm or not (with the same Fredholm defect numbers in
the first case), with isomorphic kernels and isomorphic ranges.
In the two latter cases the isomorphisms are immediately given by 
\[ 
\ker T=F^{-1}\ker S, \qquad \ran(T)= E \ran S. 
\]
However, equivalent operators do not in general have the same spectra: for example $I$, the identity
operator, is clearly equivalent to $2I$.

This notion of equivalence has been extensively  used in the study of the solvability of
Wiener--Hopf and singular integral equations; obtaining such an equivalence has been a strong motivation to study Wiener--Hopf factorisation and develop explicit methods to obtain it.\\

To give a   simple example of this, let $\TT$ denote the unit circle in the complex plane and
let $C^\mu(\TT)$, for $0< \mu < 1$, 
denote the algebra of H\"older-continuous functions with exponent $\mu$; that is
$G \in C^\mu(\TT)$ if and only if there is an $A>0$ such that
\[
 |G(z_1)-G(z_2)| \le A |z_1-z_2|^\mu \qquad (z_1,z_2 \in \TT).
\]
Suppose that $G \in C^\mu(\TT)$ is invertible, so that $G(z) \ne 0$ for all $z \in \TT$
(we write this $G \in \G C^\mu(\TT)$). Then $G$ admits a factorisation, known as a
Wiener--Hopf factorisation, as follows:
\beq\label{eq:4}
G= G_- z^k G_+
\eeq
with $k \in \ZZ$, $G^{\pm 1}_- \in \ol{H^\infty} \cap C^\mu(\TT)$, and
$G^{\pm 1}_+ \in H^\infty \cap C^\mu(\TT)$.
Here $k$ is the winding number of $G$ around the origin in the complex plane.\\

Now denote by $P^+$ the orthogonal projection from $L^2:=L^2(\TT)$ onto $H^2_+:= H^2(\DD)$ and define the {\em Toeplitz operator with symbol $G$\/} by
\beq \label{eq:5}
T_G = P^+ G P^+_{| H^2_+}.
\eeq
The factorization \eqref{eq:4}
induces the operator factorization
\[
T_G = T_{G_-} T_{z^k} T_{G_+},
\] 
where $T_{G_-}$ and $T_{G_+}$ are invertible, so we have that
\[
T_G \sim T_{z^k},
\]
thus reducing the study of many properties of $T_G$ to those of a very simple Toeplitz operator $T_{z^k}$.\\

The concept of equivalent operators was generalised by Bart and Tseka\-novski\u\i~\cite{BTsk} in 1992. We say that two operators $T: X \to \wX$ and $S:Y \to \wY$ are
{\em equivalent after extension\/} (EAE) if and only if there exist Banach spaces $X_0$, $Y_0$
and invertible operators $E,F$ such that
\beq\label{eq:8}
\begin{pmatrix}
T & 0 \\ 0 & I_{X_0}
\end{pmatrix} = 
E
\begin{pmatrix}
S & 0 \\ 0 & I_{Y_0}
\end{pmatrix}
F.
\eeq
The spaces $X_0$ and $Y_0$ are called {\em extension spaces}. Then we write $T \simast S$.
That is,
\[ T \simast S \iff \begin{pmatrix}
T & 0 \\ 0 & I_{X_0}
\end{pmatrix} \sim
\begin{pmatrix}
S & 0 \\ 0 & I_{Y_0}
\end{pmatrix}.
\]
Some shared properties of operators that are EAE are the following:
\begin{itemize}
\item $\ker T \cong \ker S$;
\item $\ran T$ is closed if and only if $\ran S$ is closed, and in that case\\
$\wX/\ran T \cong \wY/\ran S$;
\item if one of the operators is left/right invertible, then so is the other;
\item if $T$ is Fredholm  (that is, if $T$  has closed range and
the spaces   
$\ker T$ and $\wX/\ran T$ are finite-dimensional), then  $S$ is also Fredholm, with $\dim \ker T=\dim \ker S$
and $\dim \wX/\ran T = \dim  \wY/\ran S$.
\end{itemize}
Some other properties are not necessarily shared.
For instance, one operator may be the zero operator, when the other is not; 
moreover, they do not necessarily share the same spectrum.

However, one may reformulate the questions of the zero operator or 
the spectrum in terms of kernels and invertibility, in such a way that one still obtains
answers for one operator from the study of another operator that is EAE. We give examples later.

Useful as the notion of EAE may be, behind it lie several difficult questions.\\

\noindent {\bf Question 1}. How do we show that two operators are EAE?\\

In general we have to construct the operators $E$ and $F$ in \eqref{eq:8} and choose the spaces $X_0$ and $Y_0$. Moreover,
given an operator $T$ whose properties one wants to study, many operators can be chosen that
are EAE to $T$. The choice is useful if studying the corresponding properties
of $S$ is simpler that the original task for $T$. So there are two more natural questions:\\

\noindent {\bf Question 2.} How do we find a {\em useful\/} EAE relation?\\

\noindent {\bf Question 3}. Which extension spaces should we choose?\\

In what follows we take advantage of the strong connection that must exist between the solutions of the two equations
\[
Tf=g, \qquad S\phi=\psi,
\]
and, in particular,
\[
Tf=0, \qquad S\phi=0,
\]
to suggest answers to the questions above in various examples.

\section{Toeplitz operators on $H^2_+$}

Recall that the Toeplitz operator $T_G: H^2_+ \to H^2_+$ with symbol $G \in L^\infty$
was defined  in \eqref{eq:5}.
Its kernel consists of the functions $\phi_+ \in H^2_+$ such that 
$P^+ G \phi_+=0$; that is, $G\phi_+ \in H^2_-:= L^2 \ominus H^2_+$. Thus,
for any $\phi_+ \in H^2_+$ we have
\begin{align}\label{eq:18}
T_G \phi_+ = 0 & \iff G\phi_+\in H^2_-\nonumber
\\
& \iff G\phi_+ + \phi_- = 0 \quad \hbox{for some } \phi_- \in H^2_- \nonumber \\
& \iff (GP^+ + P^-)\phi=0 \quad \hbox{with } \phi \in L^2 \hbox{ and } P^\pm\phi=\phi_\pm.
\end{align}
The operator $GP^+ + P^-$ is called a {\em paired operator}. These are operators
of the form 
\[
S_{A,B} = AP^+ + B P^-,
\]
where $A$ and $B$ are bounded operators on $L^2$. 
For a recent study of their properties we refer to \cite{CGP24,CP24}.
In this case
we take $A$ to be multiplication by $G$ and $B=I$, the identity. From
\eqref{eq:18} we see that
\beq\label{eq:19}
\phi_+ \in \ker T_G \iff \phi \in \ker (GP^++P^-), \hbox{ and } P^+\phi=\phi_+ 
\eeq
and
\[
P^+: \ker (GP^++P^-) \to \ker T_G
\]
is an isomorphism with inverse defined by
\[
\phi_+ \mapsto \phi_+ - G \phi_+ \quad \hbox{for } \phi_+ \in \ker T_G.
\]
From \eqref{eq:19} and $\ker T_G=P^+ \ker (GP^++P^-)$ we get a natural candidate for
an operator that is EAE to $T_G$, as well as a 
hint of which the unknown spaces should be. Indeed, we have the following:
\begin{thm}\label{thm:2.1aug3}
\beq\label{eq:22}
GP^++P^- \simast T_G
\eeq
\end{thm}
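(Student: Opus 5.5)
We prove \eqref{eq:22} by writing down explicit invertible operators. The kernel isomorphism suggests extending $T_G$ by the identity on $H^2_-$, so we take $X_0=H^2_-$ and $Y_0=\{0\}$. Both sides then act on $L^2 = H^2_+\oplus H^2_-$. The goal is invertible $E,F$ on $L^2$ with
\[
GP^++P^- = E\begin{pmatrix} T_G & 0\\ 0 & I_{H^2_-}\end{pmatrix} F ,
\]
where the diagonal operator is identified with $T_GP^+ + P^-$ on $L^2$.

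First we write $GP^++P^-$ as a block matrix relative to $L^2=H^2_+\oplus H^2_-$, with columns indexed by the domain components and rows by the range components:
\[
GP^++P^- = \begin{pmatrix} P^+GP^+ & 0 \\ P^-GP^+ & I_{H^2_-}\end{pmatrix}
= \begin{pmatrix} T_G & 0 \\ H & I\end{pmatrix},
\]
where $H=P^-G|_{H^2_+}:H^2_+\to H^2_-$ is the Hankel-type operator. This matrix is lower triangular with $I$ in the corner, so it factors as
\[
\begin{pmatrix} T_G & 0 \\ H & I\end{pmatrix}=\begin{pmatrix} I & 0 \\ H' & I\end{pmatrix}\begin{pmatrix} T_G & 0 \\ 0 & I\end{pmatrix}\begin{pmatrix} I & 0 \\ H'' & I\end{pmatrix}.
\]
The simplest choice is $E=I$ and $F=\begin{pmatrix} I & 0\\ H & I\end{pmatrix}$. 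To check it, multiply out: $\begin{pmatrix} T_G&0\\0&I\end{pmatrix}\begin{pmatrix} I&0\\ H&I\end{pmatrix}=\begin{pmatrix} T_G & 0\\ H & I\end{pmatrix}$, which is the required matrix.

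Next we observe that $F$ is invertible, with inverse $\begin{pmatrix} I&0\\ -H&I\end{pmatrix}$. In operator form, $F=P^++P^-GP^++P^-$ and $F^{-1}=I-P^-GP^+$. This gives $GP^++P^-=(T_G\oplus I_{H^2_-})F$, which is the EAE relation with $E=I$, $X_0=H^2_-$ and $Y_0=\{0\}$. As a consistency check, $F^{-1}$ restricted to $\ker T_G$ should recover the stated inverse isomorphism. For $\phi_+\in\ker T_G$ we have $G\phi_+\in H^2_-$, so $F^{-1}\phi_+=\phi_+-G\phi_+$, matching the formula given before the statement.

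The only delicate point is bookkeeping rather than analysis. We must make sure the block decomposition matches the definition \eqref{eq:8}: the extension spaces sit in the correct slots, and $S_{A,B}$ is viewed as an operator on $L^2\cong H^2_+\oplus H^2_-$ in both domain and codomain. Boundedness of $H$ follows from $G\in L^\infty$, so no further obstacle arises.
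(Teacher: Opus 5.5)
Your argument is correct and is essentially the paper's proof. The paper states the same triangular identity in inverted form, $T_G\oplus I_{H^2_-}=(GP^++P^-)(I-P^-GP^+)$, with your $F^{-1}$ as the right factor and the unitary identification $L^2\cong H^2_+\oplus H^2_-$ as the left factor.

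There is one labelling slip. Your displayed identity puts $GP^++P^-$ in the role of $T$ in \eqref{eq:8}, so the extension spaces are $X_0=\{0\}$ and $Y_0=H^2_-$ rather than the reverse; this is harmless, since the relation is symmetric.
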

\begin{proof}
This follows since
\[
\begin{pmatrix}
T_G & 0 \\ 0 & I_{H^2_-}
\end{pmatrix}
=
\begin{pmatrix}
P^+ & -P^- \\ P^- & P^+ 
\end{pmatrix}
\begin{pmatrix}
GP^++P^- & 0 \\ 0 & I_{\{0\}}
\end{pmatrix}
\begin{pmatrix}
P^+ -P^-GP^+ & P^- \\ 0 & I_{\{0\}}
\end{pmatrix},
\]
where the factors on the left and right of the right-hand side are invertible operators.
\end{proof}
Regarding Question 2, the relation \eqref{eq:22} is useful because there is a well-developed
factorisation theory to study properties such as invertibility and Fredholmness of paired operators.

To illustrate how we can use EAE to study properties of one operator that are not shared by 
the other, but which can be reformulated in an appropriate way, consider the following:
{\em when is a Toeplitz operator the zero operator?}

Note that
this property is not shared by the associated paired operator on $L^2$, since
for any $G \in L^\infty$,
the paired operator $GP^+ + P^-$ is never the zero operator (consider its action on 
a function in $H^2_-$). So, one can ask instead, when is
$P_+ \ker (GP^+ + P^-) = H^2_+$?

We start by showing that $P_+ \ker (GP^+ + P^-)$ must be a model space if functions of
a certain type belong to that kernel.

\begin{prop}\label{prop:2.2jul30}
Let $\theta$ be an inner function and let $G \in L^\infty\setminus\{0\}$.
If a function of the form $f=\theta+f_-$, with $f_- \in H^2_-$, belongs to 
$\ker (GP^+ + P^-)$, then we must have $\ol G \in z\theta H^\infty$ and,
if $\ol G = \theta z G_i G_o$, with $G_i$ inner
and $G_o$ outer, in an inner--outer factorization, then
\[
P_+ \ker (GP^+ + P^-) = \ker T_G = K_{z\theta G_i}.
\]
\end{prop}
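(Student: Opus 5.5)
From $f=\theta+f_-\in\ker(GP^++P^-)$ we read off $P^+f=\theta$ and $P^-f=f_-$. So $G\theta+f_-=0$, i.e. $G\theta=-f_-\in H^2_-$. By \eqref{eq:19} this says exactly $\theta\in\ker T_G$. The plan is to turn this membership into information on $\ol G$, and then to compute $\ker T_G$ directly.

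\textbf{Step 1: the form of $\ol G$.} Conjugate $G\theta\in H^2_-$. Using $\ol{H^2_-}=zH^2_+$ and $\ol\theta=\theta^{-1}$ on $\TT$, we get $\ol G\,\ol\theta\in zH^2_+$. Hence $\ol G\in z\theta H^2_+$. Since $\ol G\in L^\infty$, the quotient $\ol G/(z\theta)$ is bounded on $\TT$. A function in $H^2_+$ with bounded boundary values lies in $H^\infty$, so $\ol G\in z\theta H^\infty$.

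Write $\ol G=z\theta G_iG_o$ with $G_iG_o\in H^\infty\setminus\{0\}$; the assumption $G\neq 0$ is what allows the inner--outer factorization. Then
\[
G=\ol z\,\ol\theta\,\ol{G_i}\,\ol{G_o}=\ol{z\theta G_i}\;\ol{G_o}.
\]

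\textbf{Step 2: $\ker T_G$ is the model space.} The first equality $P^+\ker(GP^++P^-)=\ker T_G$ is already given by \eqref{eq:19}. For $\phi_+\in H^2_+$, the condition $\phi_+\in\ker T_G$ means $G\phi_+\in H^2_-=\ol z\,\ol{H^2_+}$. Set $u=z\theta G_i$, which is inner. Then $G\phi_+=\ol u\,\ol{G_o}\,\phi_+$, so the condition becomes
\[
\ol{G_o}\,\ol u\,\phi_+\in\ol z\,\ol{H^2_+}, \quad\text{equivalently}\quad G_o\,u\,\ol{\phi_+}\in zH^2_+ .
\]

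For the inclusion $K_u\subseteq\ker T_G$: take $\phi_+\in K_u=H^2_+\cap u\,\ol z\,\ol{H^2_+}$. Then $u\,\ol{\phi_+}\in zH^2_+$. Multiplying by $G_o\in H^\infty$ keeps us in $zH^2_+$, so $\phi_+\in\ker T_G$.

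For the reverse inclusion: suppose $G_o\,u\,\ol{\phi_+}=zh_+$ with $h_+\in H^2_+$. Then $u\,\ol{\phi_+}=zh_+/G_o$, which belongs to $L^2$. Since $G_o$ is outer, $h_+/G_o$ lies in the Smirnov class $N^+$, and its boundary values are in $L^2$. By the Smirnov theorem, $h_+/G_o\in H^2_+$. Therefore $u\,\ol{\phi_+}\in zH^2_+$, which says $\phi_+\in K_u$. This gives
\[
\ker T_G=K_{z\theta G_i}.
\]

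The main obstacle is this division by the outer function $G_o$. It is handled by the Smirnov class argument rather than by any invertibility of $G_o$, which is not assumed. Everything else is a direct computation with conjugates and the identity $\ol{H^2_-}=zH^2_+$.
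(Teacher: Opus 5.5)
Your proof is correct and takes essentially the same approach as the paper. You read off $\ol G\in z\theta H^\infty$ from $G\theta=-f_-$, then prove both inclusions for $K_{z\theta G_i}$, removing the outer factor with the Smirnov class. The paper does the same division on the anti-analytic side via $\ol z\,\ol{\GN^+}\cap L^2=H^2_-$, which is just the conjugate of your step $N^+\cap L^2=H^2_+$.
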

\begin{proof}
If $G\theta + f_-=0$ then 
\[
G=-\ol\theta f_- \in  \ol\theta H^2_-\cap L^\infty  = \ol z\ol\theta \ol{H^2_+} \cap L^\infty = \ol z\ol\theta \ol{H^\infty}.
\]
Writing $\ol G=\theta z G_i G_o$, we have that, for any $f \in \ker (GP^+ + P^-)$ with $P^\pm f = f_\pm$,
\[
\ol z\ol \theta \ol{G_i} \ol{G_o} f_+=-f_- \iff \ol z\ol\theta \ol{G_i} f_+=- \frac{f_-}{\ol{G_o}} \in \ol z \ol{\GN^+} \cap L^2= H^2_-.
\]
So, $f_+ \in \ker T_G \implies f_+ \in K_{z\theta G_i}$.

Conversely, if $f_+ \in \ker T_{\ol z \ol\theta \ol{G_i}}= K_{z\theta G_i}$, then
\[
Gf_+ = \ol z \ol\theta \ol{G_i} \ol{G_o} f_+ = \ol{G_o} (\underbrace{\ol z\ol\theta \ol{G_i} f_+}_{\in H^2_-}) \in H^2_-,
\]
so $f_+ \in \ker T_G$.
\end{proof}

Taking, in particular, $f=1+f_-$ with $f_- \in H^2_-$, we have:
\begin{cor}
Let $g \ne 0$. Then $1 \in \ker T_G \iff \ker T_G = K_{z\alpha}$ for some inner function $\alpha$. 
\end{cor}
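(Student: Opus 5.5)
We apply Proposition~\ref{prop:2.2jul30} with $\theta=1$ for the forward implication, and use the fact that $1 \in K_{z\alpha}$ for the converse. (The hypothesis ``$g\ne 0$'' is read as $G \ne 0$, as in the proposition.)

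\emph{Forward direction.} Suppose $1 \in \ker T_G$. By \eqref{eq:19}, with $\phi_+=1$, the function $f = 1 - G$ lies in $\ker(GP^++P^-)$ and has $P^+ f = 1$. Since $G\cdot 1 \in H^2_-$, its negative part is $f_- = -G\in H^2_-$, so $f$ has the form $\theta + f_-$ with $\theta = 1$. Proposition~\ref{prop:2.2jul30} then gives $\ol G \in zH^\infty$. Writing $\ol G = z G_i G_o$ in inner--outer form, the same proposition gives
\[
\ker T_G = P^+\ker(GP^++P^-) = K_{zG_i}.
\]
So we may take $\alpha = G_i$. Here $G\neq 0$ is exactly what guarantees that the inner--outer factorization exists and that $G_i$ is a genuine inner function.

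\emph{Converse.} Suppose $\ker T_G = K_{z\alpha}$ with $\alpha$ inner. It suffices to show that $1 \in K_{z\alpha} = H^2_+ \ominus z\alpha H^2_+$. For any $h\in H^2_+$,
\[
\langle 1, z\alpha h\rangle = \ol{(z\alpha h)(0)} = 0,
\]
since $z\alpha h$ vanishes at the origin. Hence $1\perp z\alpha H^2_+$, so $1 \in K_{z\alpha} = \ker T_G$.

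Neither step is a real obstacle. The only point needing care is the bookkeeping in the forward direction: checking that $1\in\ker T_G$ really yields an element of $\ker(GP^++P^-)$ of the form $1+f_-$ with $f_-\in H^2_-$. This follows directly from \eqref{eq:19}, together with the equivalence \eqref{eq:18}.
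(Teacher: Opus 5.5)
Your proposal is correct and matches the paper's argument: the paper obtains the corollary by applying Proposition~\ref{prop:2.2jul30} with $\theta=1$ to the element $1-G\in\ker(GP^++P^-)$, exactly as you do, with the converse being the elementary fact that $1\in K_{z\alpha}$. You are also right to read the hypothesis $g\ne 0$ as $G\ne 0$.
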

Thus we can present the following well-known result in the light of the notion of EAE.
\begin{cor}
$T_G=0 \iff G=0$.
\end{cor}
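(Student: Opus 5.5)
The trivial direction is immediate: if $G=0$ then $T_G=P^+GP^+_{|H^2_+}=0$. For the converse, the plan is to use the reformulation suggested by Theorem~\ref{thm:2.1aug3}: $T_G=0$ means $\ker T_G=H^2_+$, i.e.\ $P_+\ker(GP^++P^-)=H^2_+$. We argue by contradiction, assuming $T_G=0$ with $G\neq 0$, and then apply Proposition~\ref{prop:2.2jul30} and its corollary to reach an impossibility.

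Since $1\in H^2_+=\ker T_G$, the isomorphism $P^+:\ker(GP^++P^-)\to\ker T_G$ gives an element of the kernel of the form $f=1+f_-$, with $f_-=-G\cdot 1\in H^2_-$. Proposition~\ref{prop:2.2jul30} applies with $\theta=1$ and $G\neq0$. It yields $\ol G\in zH^\infty$ and, writing $\ol G=zG_iG_o$,
\[
H^2_+=\ker T_G=K_{zG_i}.
\]
This is exactly the preceding corollary with $\alpha=G_i$.

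The contradiction comes from the fact that a model space $K_{z\alpha}=H^2_+\ominus z\alpha H^2_+$ is never all of $H^2_+$. Indeed, $z\alpha$ is a nonzero element of $z\alpha H^2_+$, hence it is orthogonal to $K_{z\alpha}$ and not contained in it. So $K_{zG_i}\neq H^2_+$, which contradicts the display above, and therefore $G=0$.

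The only point needing care is the hypothesis $G\in L^\infty\setminus\{0\}$ in Proposition~\ref{prop:2.2jul30}. This hypothesis is what guarantees an inner--outer factorisation of $\ol G$ with $G_i$ genuinely inner. Because we assumed $G\neq0$ in the contradiction argument, it is satisfied, and the remaining steps are routine.
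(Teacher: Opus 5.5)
Your proof is correct and follows the paper's own route. The paper obtains this corollary from the preceding one, which is the case $\theta=1$ of Proposition~\ref{prop:2.2jul30}: if $T_G=0$ with $G\neq 0$, then $1\in\ker T_G$ forces $H^2_+=\ker T_G=K_{z\alpha}$, which is impossible. Your explicit kernel element $1-G$ and your observation that $z\alpha\perp K_{z\alpha}$ simply fill in details the paper leaves implicit.
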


It is also clear that, in general, two operators that are EAE do not have identical spectra. 
Thus, to study the spectrum of a Toeplitz operator by means of EAE one must
reformulate the question in terms of invertibility:
when is $T_G-\lambda I_{H^2_+} = T_{G-\lambda}$ invertible? 
This is equivalent to asking : when is $(G-\lambda)P^+ + P^-$ invertible?

The answer to this last question is known (see, e.g. \cite{MP}): it is when $G-\lambda$ admits a 
canonical generalised factorisation relative to $L^2$.
If $G$ is continuous on $\TT$, this can be expressed by saying
that $\lambda$ does not belong to the image of $G$ in the complex plane
and the index $\ind_\lambda G=0$.

By using the approach expressed in \eqref{eq:18} we can also present some invariance properties
of Toeplitz kernels, meaning that if a function $f$ belongs to a 
particular kernel then a subspace of $H^2_+$, determined by $f$, must also be contained in
that kernel.

Recall that
a {\em nearly $S^*$-invariant subspace\/} $M \subset H^2_+$ is one such that
if $f \in M$ and $f(0) =0$ then $f/z \in M$ (that is, one can divide out zeros). This is
a property possessed by model spaces, which are invariant under the backward shift $S^*$,
and indeed by kernels of general Toeplitz operators.
We shall abbreviate this to {\em nearly invariant\/} in what follows.

Following work of Hitt~\cite{hitt} and Hayashi~\cite{hayashi86,hayashi90} it is known that such spaces $M$ have the structure
$M=h_+K$, where $K$ is an $S^*$-invariant subspace (that is, a model space $K_\theta$
in the nontrivial case) and $h_+ \in M$ is an isometric multiplier from $K$ onto $M$.

Toeplitz kernels are nearly $S^*$-invariant subspaces of $H^2_+$, and
in fact they are even nearly $\ol{H^\infty}$-invariant: 
in other words, if $f_+ \in \ker T_g$ and
$h \in H^\infty$ satisfies $\ol h f_+  \in H^2_+$, then $\ol h f_+  \in \ker T_g$.
For if $g f_+ = f_- \in H^2_-$ then $g \ol h f_+= \ol h f_-$, which also lies in $H^2_-$.

For an inner function $\theta \in H^\infty$ we define the subspace
\[
K^\infty_{ \theta}= \{h \in H^\infty: \ol\theta h \in \ol{H^2_+}\}=\{h \in H^\infty: \theta \ol h \in H^2_+ \}.
\]
Note that for such an $h$ we have $\ol z \ol \theta h\in H^2_-$ and so $h \in \ker T_{\ol z\ol\theta}=K_{z\theta}$.
From this we see easily that $K^\infty_{\theta} = K_{z\theta} \cap H^\infty$.

\begin{prop}\label{prop:2.4jul30}
If  $f_+ \in \ker T_g$ for some $g \in L^\infty$, and $f_+=f_{i+}f_{o+}$ is its inner--outer factorization
then $\ol{K^\infty _{{f_{i+}}} }f_+ \subset \ker T_g$.
\end{prop}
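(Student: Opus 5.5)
Take $h \in K^\infty_{f_{i+}}$, so $h\in H^\infty$ and $f_{i+}\ol h \in H^2_+$; we must show $\ol h f_+ \in \ker T_g$. The plan is to apply the near $\ol{H^\infty}$-invariance of Toeplitz kernels noted just before the statement: if $f_+\in\ker T_g$, $h\in H^\infty$ and $\ol h f_+\in H^2_+$, then $\ol h f_+\in\ker T_g$. It therefore suffices to check that $\ol h f_+\in H^2_+$.

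Write $\ol h f_+ = (f_{i+}\ol h)\, f_{o+}$. The first factor $u:=f_{i+}\ol h$ lies in $H^2_+$ and is bounded on $\TT$, since $|u|=|h|$ a.e. Hence $u\in H^2_+\cap L^\infty = H^\infty$. The second factor $f_{o+}$ lies in $H^2_+$, since $|f_{o+}|=|f_+|$ on $\TT$ and $f_{o+}$ is outer, so it is in $H^2_+$. A product of an $H^\infty$ function and an $H^2_+$ function lies in $H^2_+$, so $\ol h f_+ = u f_{o+}\in H^2_+$.

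Combining the two steps gives $\ol h f_+\in\ker T_g$ for every $h\in K^\infty_{f_{i+}}$, that is, $\ol{K^\infty_{f_{i+}}}f_+\subset\ker T_g$. For completeness, the invariance itself follows directly: from $gf_+=f_-\in H^2_-$ we get $g\ol h f_+=\ol h f_-$. This lies in $H^2_-$ because $\ol{H^\infty}\,H^2_-\subset H^2_-$, which holds since $H^2_-=\ol z\,\ol{H^2_+}$.

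The only point needing care is the identification $H^2_+\cap L^\infty=H^\infty$, together with the fact that the outer factor of an $H^2_+$ function is again in $H^2_+$. Both are standard facts, so I expect no real obstacle.
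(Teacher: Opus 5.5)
Your proof is correct and follows the paper's argument: first show $\ol h f_+\in H^2_+$, then use $g\ol h f_+=\ol h f_-\in H^2_-$. The only cosmetic difference is in the first step, where you use $f_{i+}\ol h\in H^2_+\cap L^\infty=H^\infty$, whereas the paper writes $\ol h f_{i+}f_{o+}\in H^1_+\cap L^2=H^2_+$.
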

\begin{proof}
If
$h \in K^\infty _{{f_{i+}}}$ and $g f_+ = f_- \in H^2_-$ then 
\[
\ol h f_+ = \ol h f_{i+} f_{o+} \in H^1_+ \cap L^2=H^2_+
\]
 and
\[
 g \ol h f_+ = f_- \ol h \in H^2_-.
\]
\end{proof}

In particular, we see that the property of near $S^*$-invariance for Toeplitz kernels can be expressed as in Proposition~\ref{prop:2.4jul30}. Indeed, for $f_+ \in \ker T_g$, the condition $f_+(0)=0$ is equivalent to
$z$ dividing $f_{i+}$, and $\ol z \in K^\infty_{f_{i+}}$, so $\ol z f_+ \in \ker T_g$.

Another form of (near) invariance arises if we define $R_\TT$ to denote the space of rational functions whose
only poles lie on $\TT$; in particular, they are proper, in the sense that they have a finite limit at $\infty$.
Now for $u$ an outer function we define
\[ 
 K'_u = \{ r \in R_\TT: \ol u r \in \ol{H^2_+} \}.
\]
\begin{prop}
If  $f_+ \in \ker T_g$ for some $g \in L^\infty$, and $f_+=f_{i+}f_{o+}$ is its inner--outer factorization
then $\ol{K'_{f_{o+}}} f_+ \subset \ker T_g$.
\end{prop}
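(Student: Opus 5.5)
The argument mirrors that of Proposition~\ref{prop:2.4jul30}, with the inner factor replaced by the outer factor and $H^\infty$ replaced by $R_\TT$. Fix $r \in K'_{f_{o+}}$ and write $g f_+ = f_- \in H^2_-$. Two things must be checked: that $\ol r f_+ \in H^2_+$, and that $g\ol r f_+ \in H^2_-$.

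\emph{Step 1: $\ol r f_+ \in H^2_+$.} By definition of $K'_{f_{o+}}$ we have $\ol{f_{o+}}\, r \in \ol{H^2_+}$, so $f_{o+}\ol r \in H^2_+$. Hence
\[
\ol r f_+ = f_{i+}\,(f_{o+}\ol r) \in H^2_+ ,
\]
since $f_{i+}$ is inner, hence bounded and analytic. This step is immediate.

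\emph{Step 2: $g\ol r f_+ \in H^2_-$.} We have $g\ol r f_+ = \ol r f_-$, and by Step 1 this lies in $L^2$. The difficulty is that $\ol r$ need not be bounded, because $r$ may have poles on $\TT$, so we cannot simply say that $\ol r$ times an $H^2_-$ function stays in $H^2_-$. We handle this with a Smirnov-class argument.

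Since $r \in R_\TT$ is proper with all its poles on $\TT$, write $r = p/q$. Here $q$ is a polynomial with zeros only on $\TT$, and $\deg p \le \deg q$. Then:
\begin{itemize}
\item $r$ is analytic on $\DD$ and lies in the Smirnov class $\GN^+$, because $q$ is outer (its zeros lie on $\TT$) and $p$ is a polynomial;
\item consequently $\ol r \in \ol{\GN^+}$, as a boundary function analytic in the exterior of the disc;
\item we also have $\ol z\,\ol{\GN^+}\supseteq \overline{H^2_+}\,\ol z = H^2_-$.
\end{itemize}
Hence
\[
\ol r f_- \in \ol z\,\ol{\GN^+}\cdot\ol{\GN^+} \subset \ol z\,\ol{\GN^+}.
\]
Combining this with $\ol r f_- \in L^2$ from Step 1, the Smirnov theorem $\ol z\,\ol{\GN^+}\cap L^2 = H^2_-$ (already used in the proof of Proposition~\ref{prop:2.2jul30}) gives $\ol r f_- \in H^2_-$.

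\emph{Conclusion.} Steps 1 and 2 together show that $\ol r f_+ \in \ker T_g$ for every $r \in K'_{f_{o+}}$, which is the claim.

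The main obstacle is Step 2, namely justifying that $r$, and hence $\ol r$, lies in the Smirnov class despite the poles on $\TT$. This works because the denominator $q$ has no zeros in $\DD$ and is therefore outer, and because properness ensures there is no pole at $\infty$ that would spoil analyticity of $\ol r$ outside the disc. These two facts about $r$ are the only properties used.
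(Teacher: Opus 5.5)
Your proof is correct and follows the paper's argument. Step 1 is identical, and Step 2 is just the complex conjugate of the paper's observation that $\ol z\, r\,\ol{f_-}\in L^2\cap\GN^+ = H^2_+$.

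One small correction to your closing remark: properness of $r$ plays no role. The fact $\ol r\in\ol{\GN^+}$ follows from $r$ being analytic in $\DD$ with an outer denominator, and the behaviour of $\ol r$ at $\infty$ in the exterior of the disc corresponds to $r$ at $0$, not to $r$ at $\infty$.
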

\begin{proof}
For $r \in K'_{f_{o+}}$ we have 
$\ol r f_+ = \ol r f_{i+}f_{o+} \in H^2_+$.

Also, with $g f_+=f_-$ as before, we have
$g \ol r f_+= \ol r f_- \in H^2_-$. This last assertion follows because $\ol z r  \ol{f_-} \in L^2 \cap \GN^+ = H^2_+$,
where $\GN^+$ is the Smirnov class (the class of functions that can be expressed as the ratio of two $H^2_+$ functions with the denominator outer).
\end{proof}

\begin{rem}{\rm
The situation for Hankel operators is less developed than
for Toeplitz operators. One commonly used definition of a Hankel operator
$\Gamma_G: H^2_+ \to H^2_-$ is the following:
\[
\Gamma_G f = P^- G P^+ f, \qquad f \in H^2_+.
\]
In order to obtain a unitarily equivalent operator on $H^2_+$ we may use
 the linear inversion involution, $J: z^n \mapsto z^{-n-1}$ $(n \in \ZZ)$, which exchanges $H^2_+$ and $H^2_-$, defining
 \[
 \wt \Gamma_G f = P^+ J G P^+ f, \qquad f \in H^2_+.
 \]
 We can link these to paired operators
since $\Gamma_G \phi_+=0 \iff P^- G\phi_+=0 \iff JG \phi_+= -\phi_-$ for some $\phi_- \in H^2_-$,
and thus, with $\phi=\phi_+ + \phi_-$ we have
\beq\label{eq:a23}
(JG P^+ + P^-)\phi=0.
\eeq
Conversely, if $\phi$ satisfies \eqref{eq:a23}, we see that $\phi_+ \in \ker \Gamma_G$.
}
\end{rem}

\section{(Asymmetric) truncated Toeplitz operators}
\label{sec:3}

Recall that for $\theta$ an inner function, the model spaces $K_\theta =H^2_+ \ominus \theta H^2_+$  are the nontrivial invariant subspaces for the
backward shift $T_{\ol z}$.
Now for $G \in L^\infty$ and $\theta, \alpha$ inner we define the {\em asymmetric
truncated Toeplitz operator  (ATTO) with symbol $G$}
\[
A^{\theta,\alpha}_G: K_\theta \to K_\alpha, \qquad A^{\theta,\alpha}_G = P_\alpha G {P_\theta}_{|K_\theta},
\]
where $P_\alpha$ and $P_\theta$ are the orthogonal projections from $L^2$ onto
$K_\alpha$ and $K_\theta$ respectively. 
When $\theta=\alpha$ we have the {\em truncated Toeplitz operators\/} (TTO)
of Sarason \cite{sarason07}, namely
\[
A^\theta_G: K_\theta \to K_\theta, \qquad A^\theta_G=P_\theta G {P_\theta}_{|K_\theta}.
\]
One can also define ATTO with symbols $G$ in $L^2$ by
\[
A^\theta_G f = P_\theta Gf \qquad \hbox{for all} \quad f \in K_\theta \cap H^\infty.
\]

TTO have attracted a great deal of interest (see, in particular, the book \cite{GMR16}).
ATTO were introduced in
\cite{CP17} and studied, for the first time, by means of EAE. The
motivation of this came from the study of the kernels of ATTO. We have
\beq\label{eq:27}
A^{\theta,\alpha}_G \phi_\theta =0 \iff P_\alpha G \phi_\theta= 0 \iff G\phi_\theta \in K_\alpha^\perp = H^2_- \oplus \alpha H^2_+
\eeq 
with $\phi_\theta \in K_\theta$. This is equivalent to
\begin{align}\label{eq:28}
\left\{
\begin{aligned}
\ol\theta \phi_{1+} &= \phi_{1-}  \\
G \phi_\theta &= \phi_{2-} - \alpha \phi_{2+} 
\end{aligned} \right.
& \implies
\begin{pmatrix}
\ol\theta & 0 \\ G & \alpha \end{pmatrix}
\begin{pmatrix} \phi_{1+} \\ \phi_{2+} \end{pmatrix} =
 \begin{pmatrix}
 \phi_{1-} \\ \phi_{2-} \end{pmatrix} \nonumber \\
 & \iff 
\begin{pmatrix} \phi_{1+} \\ \phi_{2+} \end{pmatrix} \in \ker T_\G,
\end{align}
where $\phi_{1 \pm},\phi_{2\pm} \in H^2_{\pm}$ (indeed $\phi_{1+}=\phi_\theta$) and
\beq\label{eq:29}
\G = \begin{pmatrix}
\ol\theta & 0 \\ G & \alpha \end{pmatrix}.
\eeq
We see that
\beq\label{eq:12ajun23}
\ker A^{\theta,\alpha}_G = P_1 \ker T_\G,
\eeq
where $P_1(x,y)=x$, defines an isomorphism between the two kernels.

As in the previous example, by reformulating the problem of characterising the kernel in terms
of a Riemann--Hilbert problem we obtain a natural candidate for equivalence after extension
with $A^{\theta,\alpha}_G$. Indeed, we have the following:

\begin{thm}\label{thm:1} \cite{CP16,CP17}
\begin{align}
A_G^{\theta,\alpha} & \simast T_\G, & \G&=\begin{pmatrix} \ol\theta & 0 \\ G & \alpha \end{pmatrix}, \qquad \hbox{and so} \nonumber\\
A_G^{\theta} & \simast T_{\G_1}, & {\G_1}&=\begin{pmatrix} \ol\theta & 0 \\ G & \theta \end{pmatrix}.
\end{align}
\end{thm}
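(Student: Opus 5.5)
The plan is to exhibit the equivalence after extension directly. I will decompose the domain and codomain of $T_\G$ so that $T_\G$ becomes a $3\times 3$ block operator matrix. In that matrix $A^{\theta,\alpha}_G$ appears as one entry, two identity blocks appear in positions that allow all the other entries to be cleared, and invertible triangular operators do the clearing. The kernel computation \eqref{eq:28}--\eqref{eq:12ajun23} suggests the splittings: the first component of a kernel element is $\phi_\theta$, and the second component is absorbed by $\alpha H^2_+$. The extension spaces will be $X_0 = H^2_+\oplus H^2_+$ on the side of $A^{\theta,\alpha}_G$ and $Y_0=\{0\}$ on the side of $T_\G$. The second assertion, for $A^\theta_G$ and $\G_1$, is then the special case $\alpha=\theta$.

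The first step is the decomposition. Write $T_\G(x,y) = (P^+\ol\theta x,\; P^+(Gx)+\alpha y)$ on $H^2_+\oplus H^2_+$. On the domain, use the unitary identification
\[
H^2_+\oplus H^2_+ \cong K_\theta\oplus H^2_+\oplus H^2_+, \qquad (x,y)\mapsto (x_\theta, x', y), \quad x = x_\theta+\theta x'.
\]
On the codomain, split the second coordinate $H^2_+ = K_\alpha\oplus\alpha H^2_+$, writing $w = P_\alpha w + \alpha\,\ol\alpha(I-P_\alpha)w$. This gives the unitary identification of the codomain with $H^2_+\oplus K_\alpha\oplus H^2_+$. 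Since $P^+\ol\theta x = x'$ and $P_\alpha(\alpha y)=0$, the operator $T_\G$ becomes, up to these unitaries (and after reordering the output coordinates as $K_\alpha, H^2_+, H^2_+$),
\[
\begin{pmatrix} A^{\theta,\alpha}_G & B & 0\\ 0 & I & 0\\ C & D & I\end{pmatrix}.
\]
Here the entries are as follows:
\begin{itemize}
\item $B=P_\alpha G\theta$ acts from $H^2_+$ to $K_\alpha$;
\item $C=\ol\alpha(P^+-P_\alpha)G$ acts on $K_\theta$;
\item $D=\ol\alpha(P^+-P_\alpha)G\theta$ acts on $H^2_+$.
\end{itemize}
The identity $P_\alpha P^+=P_\alpha$ is used in identifying the first row. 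All of these entries are bounded.

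The second step is the elimination. Subtracting $B$ times the second row from the first row, and $D$ times the second row from the third row, is left multiplication by an invertible lower/upper unitriangular operator matrix. After it, the middle column is zero apart from $I$. Next, subtracting the third column composed with $C$ from the first column is right multiplication by an invertible unitriangular matrix, and it removes $C$. The result is $\mathrm{diag}(A^{\theta,\alpha}_G, I_{H^2_+}, I_{H^2_+})$. Composing with the unitaries from the first step gives invertible $E,F$ with
\[
\begin{pmatrix} A^{\theta,\alpha}_G & 0\\ 0& I_{H^2_+\oplus H^2_+}\end{pmatrix} = E\,T_\G\,F,
\]
which is exactly \eqref{eq:8} with $Y_0=\{0\}$.

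The main obstacle is bookkeeping rather than conceptual. One must check that the coordinate maps are genuinely isomorphisms, in particular that $x\mapsto(x_\theta,\ol\theta(I-P_\theta)x)$ is unitary onto $K_\theta\oplus H^2_+$, and similarly for $\alpha$. One must also check that the triangular factors have the correct domains and ranges, so that their inverses are bounded; they are unitriangular, so their inverses are again unitriangular. As a consistency check, the resulting $F$ should restrict on kernels to the inverse of the isomorphism $P_1$ in \eqref{eq:12ajun23}.
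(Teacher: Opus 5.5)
Your argument is correct. The two coordinate changes are unitary, and the $3\times 3$ matrix is right; it uses $P^+(\ol\theta x_\theta)=0$ for $x_\theta\in K_\theta$, $P_\alpha(\alpha y)=0$ and $P_\alpha P^+=P_\alpha$. The elimination then gives $\mathrm{diag}(A^{\theta,\alpha}_G,I,I)=E\,T_\G\,F$ with $Y_0=\{0\}$.

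Your route is organised differently from the one the paper indicates. The paper passes through the intermediate operator $P_\alpha GP_\theta+Q_\theta: H^2_+\to K_\alpha\oplus\theta H^2_+$ and proves $A^{\theta,\alpha}_G\simast P_\alpha GP_\theta+Q_\theta\simast T_\G$ as two separate steps. Your domain splitting $H^2_+=K_\theta\oplus\theta H^2_+$ is essentially the first step: the intermediate operator is $A^{\theta,\alpha}_G\oplus I_{\theta H^2_+}$ in disguise. Your use of the two identity blocks to clear $B$, $C$, $D$ does the work of the second step.

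Doing it in one shot gains you three things:
\begin{itemize}
\item explicit $E$ and $F$;
\item the one-sided extension $X_0=H^2_+\oplus H^2_+$, $Y_0=\{0\}$, without composing two EAE relations (composing would combine the extension spaces);
\item an explicit kernel isomorphism: $F$ sends $\phi_\theta\in\ker A^{\theta,\alpha}_G$ to $(\phi_\theta,-\ol\alpha P^+(G\phi_\theta))$, which is the inverse of $P_1$ in \eqref{eq:12ajun23}.
\end{itemize}
The paper's route instead isolates an operator on all of $H^2_+$ built from the complementary projections $P_\theta$, $Q_\theta$, i.e.\ an operator of paired type. This fits the paper's recurring theme that paired operators are the natural intermediaries (compare Theorem~\ref{thm:2.1aug3} and the DTTO section), and it keeps each step individually simple.

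One wording point: your left factor is not itself triangular. It is the product of an upper and a lower unitriangular matrix (equivalently $I+N$ with $N^2=0$), so its invertibility is still immediate.
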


We omit the full details of the proof, but we remark that it proceeds in two simpler steps. First,
we introduce the intermediate operator 
\[
P_\alpha g P_\theta + Q_\theta: H^2_+ \to K_\alpha \oplus \theta H^2_+,
\]
where $P_\theta$ and $Q_\theta$ are the orthogonal projections from $H^2_+$ ono $K_\theta$ and $\theta H^2_+$
respectively. Then the proof proceeds by showing that
$A_G^{\theta,\alpha}   \simast P_\alpha g P_\theta + Q_\theta$ and  
$P_\alpha g P_\theta + Q_\theta \simast T_\G$.\\

\begin{rem}{\rm
The EAE of Theorem~\ref{thm:1} is useful in the study of ATTO
because there are tools to study various properties of block Toeplitz operators -- such as invertibility and Fredholmness -- which one can use to study ATTO through EAE, establishing connections that might otherwise be difficult to see.

These tools include relations with the Corona Theorem. Indeed, corona problems, seen as left-invertibility problems, have a strong connection with the invertibility and Fredholmness of block Toeplitz operators
(see, for instance, \cite{CP17} and references therein), as we explain next. In this way, by using Theorem~\ref{thm:1}, the connections of ATTO with the Corona Theorem appear in  a
very natural way.
}
\end{rem}

Let $\CP^\pm$ denote the set of corona pairs in $\DD$ and its complement; i.e.,
\begin{align*}
\CP^+ &= \{H_+=(h_{1+},h_{2+}) \in (H^\infty)^2: \inf_{z \in \DD} (|h_{1+}(z)|+|h_{2+}(z)|)>0\},\\
\CP^- &= \ol{\CP^+}.
\end{align*}
By the Corona Theorem, $H_+ \in \CP^+$ if and only if there exist 
$\wt H_+ =(\wt h_{1+} ,\wt h_{2+} ) \in (H^\infty)^2$ such that
\[
\wt H_+^T H_+ = 1,
\]
and analogously for $H_- \in \CP^-$. Now define $\CP^\pm_M$ as the set of all pairs $(f^M_{1\pm},f^M_{2\pm})$ with $f^M_{i\pm}=r_i f_{i\pm}$, where
$r_i^{\pm 1} \in \GR$, the space of rational $L^\infty$ functions, and $(f_{1\pm},f_{2\pm}) \in \CP^\pm$.

We have the following \cite{CDR10}.

\begin{thm}
Let $G \in (L^\infty)^{2 \times 2}$ and suppose that there exist
$\phi^M_+, \phi^M_- \in \CP^\pm_M$ such that
\[
G\phi^M_+=\phi^M_-.
\]
Let, moreover, $\gamma=\det G$. Then $T_G$ is Fredholm if and only if $T_\gamma$ is Fredholm and, in that case,
the Fredholm indices of the two operators are the same; i.e.,
\[
\ind T_G = \ind T_\gamma.
\]
\end{thm}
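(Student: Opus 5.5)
The plan is to use the two columns $\phi^M_\pm$ to build a factorization of $G$ that reduces $T_G$ to a triangular block Toeplitz operator whose diagonal entries are scalar, and then read off the Fredholm properties from the diagonal.

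\textbf{Step 1: complete the corona pairs to invertible matrices.} First strip off the rational factors. Write $\phi^M_\pm = R_\pm \phi_\pm$ with $R_\pm=\mathrm{diag}(r^\pm_1,r^\pm_2)$ and $\phi_\pm\in\CP^\pm$. By the Corona Theorem there are $\wt\phi_+\in (H^\infty)^2$ with $\wt\phi_+^T\phi_+=1$, and similarly $\wt\phi_-\in(\ol{H^\infty})^2$. Using these we form
\[
M_+=\begin{pmatrix}\phi_{1+} & -\wt\phi_{2+}\\ \phi_{2+} & \wt\phi_{1+}\end{pmatrix},
\]
which has determinant $1$. Hence $M_+^{\pm1}\in (H^\infty)^{2\times2}$, and the same construction gives $M_-$ with $M_-^{\pm1}\in(\ol{H^\infty})^{2\times 2}$. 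The first column of $M_\pm$ is $\phi_\pm$.

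\textbf{Step 2: triangularize $G$.} The relation $G R_+\phi_+=R_-\phi_-$ gives
\[
M_-^{-1}R_-^{-1}\,G\,R_+M_+ \;=\;\begin{pmatrix}1 & a\\ 0 & \delta\end{pmatrix},
\]
with $a,\delta\in L^\infty$. Taking determinants yields $\delta=\gamma\,\det R_+/\det R_-$.

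\textbf{Step 3: transfer Fredholmness through the factors.} The factors $M_\pm^{\pm1}$ are analytic or antianalytic, so the standard identity $T_{A_-BA_+}=T_{A_-}T_BT_{A_+}$ applies, with $T_{M_\pm}$ invertible. It follows that $T_{R_-^{-1}GR_+}$ is equivalent to the Toeplitz operator with the triangular symbol above. For a triangular symbol with a $1$ in the corner, $T$ is upper triangular with an invertible $(1,1)$ entry, so its Fredholmness and index coincide with those of $T_\delta$. This part is routine.

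\textbf{Step 4: remove the rational diagonal factors.} This is the main obstacle, since $r_i^{\pm1}\in\GR$ need not be analytic or antianalytic. I would factor each scalar rational function as $r=r_-\,z^{k}\,r_+\,r_0$, where $r_\pm^{\pm1}$ are rational and (anti)analytic without poles on $\TT$, and $r_0$ collects the zeros and poles on $\TT$. For rational functions invertible in $L^\infty$ there are no poles or zeros on $\TT$, so in fact $r_0\equiv 1$. The diagonal matrices then factor as $\mathrm{diag}(z^{k_1},z^{k_2})$ times invertible (anti)analytic factors.

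Two checks are needed at this point. First, $T_{G}$ and $T_{D_-GD_+}$, with $D_\pm$ diagonal of the form $z^{k}$ times an invertible factor, are simultaneously Fredholm, with indices differing by $\pm(k_1+k_2)$ terms, because $T_{z^k}$ and $T_{\ol z^k}$ are Fredholm and multiplication by a power of $z$ on one side gives $T_{z^kF}=T_{z^k}T_F$. Second, the same shift occurs for scalars: $T_{\delta}$ versus $T_\gamma$, via $\det R_\pm$, which carries the winding numbers $k^\pm_1+k^\pm_2$.

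Matching these index shifts gives $\ind T_G=\ind T_\gamma$, and Fredholmness is equivalent throughout. Care is needed with the order of the Toeplitz products, using the identities $T_{\ol h F}=T_{\ol h}T_F$ and $T_{Fh}=T_FT_h$ with compact error terms only where $z^k$ appears on the "wrong" side, since Fredholm index is stable under compact perturbation.
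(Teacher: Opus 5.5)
The paper does not prove this theorem; it quotes it from \cite{CDR10}, so there is no in-text argument to compare yours against. On its own merits your proof is correct. Steps 1--3 are sound as written. These are: completing $\phi_\pm$ to unimodular matrices $M_\pm$ via the Corona Theorem, triangularizing $M_-^{-1}R_-^{-1}GR_+M_+$, and reducing $T_{R_-^{-1}GR_+}$ to $T_\delta$ with $\delta=\gamma\det R_+/\det R_-$.

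Step 4 needs tidying, though there is no real gap.

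\emph{The stated identity is reversed.} The identity $T_{z^kF}=T_{z^k}T_F$ is false for $k>0$; the exact one is $T_{Fz^k}=T_FT_{z^k}$.

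\emph{More factors sit on the wrong side than you say.} It is not only $z^k$ that appears on the wrong side. The analytic invertible factor of $R_-^{-1}$ stands to the left of $G$, and the antianalytic factor of $R_+$ stands to its right. Neither can be commuted past $G$. All of these produce finite-rank error terms, because the factors are rational, so nothing breaks.

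\emph{The scalar factorization is unnecessary.} You can drop $r=r_-z^kr_+$ altogether. Since $R_\pm^{\pm1}$ are continuous on $\TT$, both $T_{R_-^{-1}GR_+}-T_{R_-^{-1}}T_GT_{R_+}$ and $T_\delta-T_\gamma T_{\det R_+/\det R_-}$ are compact. Indeed $T_{cF}-T_cT_F=P^+cP^-F|_{H^2_+}$ and $T_{Fc}-T_FT_c=P^+FP^-c|_{H^2_+}$, which are finite rank for rational $c$ without poles on $\TT$. Moreover, $T_{R_-^{-1}}$, $T_{R_+}$ and $T_{\det R_+/\det R_-}$ are Fredholm, each with index equal to minus the winding number of the determinant of its symbol.

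\emph{Finishing the argument.} Passing to the Calkin algebra gives both Fredholm equivalences in both directions. Both indices are shifted by the same amount, $\mathrm{wind}\det R_- - \mathrm{wind}\det R_+$: this shift takes $\ind T_G$ to $\ind T_{R_-^{-1}GR_+}$ and $\ind T_\gamma$ to $\ind T_\delta$. Together with $\ind T_{R_-^{-1}GR_+}=\ind T_\delta$ from Step 3, this yields $\ind T_G=\ind T_\gamma$.
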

\begin{thm}\cite{CDR10}
Let $\gamma \in L^\infty$. Then $T_\gamma$ is Fredholm if and only if $\gamma\in \G L^\infty$ and $\gamma$ admits a factorization
\beq\label{eq:A6jul30}
\gamma=\gamma_- z^k \gamma_+ , \qquad \hbox{with} \quad
z \in \ZZ, \gamma_+^{\pm 1} \in H^2_+, \ol\gamma_-^{\pm 1} \in H^2_+
\eeq
(called a {\em Wiener--Hopf factorization in $L^2$}); we then have that $\ind T_\gamma=-k$.
\end{thm}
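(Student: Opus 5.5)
We prove both directions, treating the factorization \eqref{eq:A6jul30} as the scalar Wiener--Hopf factorization in $L^2$ with $k\in\ZZ$.

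\emph{Sufficiency.} Suppose $\gamma\in\G L^\infty$ admits \eqref{eq:A6jul30}. First take $k=0$. The formal inverse of $T_\gamma$ is $R=\gamma_+^{-1}P^+\gamma_-^{-1}$, defined on the dense set of polynomials. Because $\gamma_-^{-1}\in\ol{H^2_+}$ and $\gamma_+^{-1}\in H^2_+$, one checks directly that $T_\gamma R=I$ and $RT_\gamma=I$ on suitable dense sets. The real issue is boundedness of $R$, which the factorization by itself does not provide. The statement must therefore be read with the standard $L^2$ notion of generalised factorization, which also requires the operator $\gamma_+^{-1}P^+\gamma_+^{-1}\gamma P^+$ (equivalently $\gamma_+^{-1}P^+\gamma_-^{-1}$) to be bounded on $L^2$. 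This is the Simonenko/Widom condition that $|\gamma_\pm|^2$ be Muckenhoupt $A_2$ weights. Under that reading $R$ extends to a bounded inverse, so $T_\gamma$ is invertible. For general $k$, write $T_\gamma=T_{\gamma_-}T_{z^k}T_{\gamma_+}$ in the sense of unbounded factors, or reduce to $k=0$ via $T_{\gamma}=T_{z^k\gamma_0}$ with $\gamma_0=\gamma_-\gamma_+$. Since $T_{z^k}T_{\gamma_0}$ differs from $T_{\gamma}$ by a finite-rank operator, $T_\gamma$ is Fredholm with the same index as $T_{z^k}$, namely $-k$: the shift for $k>0$ has cokernel dimension $k$, and the backward shift for $k<0$ has kernel dimension $|k|$.

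\emph{Necessity.} Suppose $T_\gamma$ is Fredholm. The first step is to show $\gamma\in\G L^\infty$; this is Hartman--Wintner/Coburn type, since Fredholmness of $T_\gamma$ forces $\essinf|\gamma|>0$ (a standard argument using the lower bound of $T_\gamma$ modulo compacts, tested on $z^n f$ as $n\to\infty$). Next, multiply by a suitable power of $z$ to produce an operator $T_{z^{-k}\gamma}$ that is Fredholm of index $0$ with trivial kernel; by Coburn's lemma one of the kernel and cokernel is trivial, so this operator is invertible. For an invertible $T_\phi$, take $\gamma_+^{-1}:=T_\phi^{-1}1\in H^2_+$. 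Then $\phi\gamma_+^{-1}=1+\ol z\ol h$ with $h\in H^2_+$, and this defines $\gamma_-^{-1}$. Applying the same construction to the adjoint $T_{\ol\phi}$ produces the factors $\gamma_+$ and $\ol{\gamma_-}$ in $H^2_+$.

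The main obstacle is the consistency step: showing that these four functions satisfy $\phi=\gamma_-\gamma_+$, so that the factors are mutual inverses and lie in the stated classes. I would prove this by comparing $\gamma_+^{-1}$ and $\gamma_+$ through the identity $\langle T_\phi^{-1}1,\cdot\rangle$, then using outerness, which follows from invertibility via nearly invariant kernels. This recovers \eqref{eq:A6jul30} with $\ind T_\gamma=-k$.
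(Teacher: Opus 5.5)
The paper gives no proof of this statement; it only cites \cite{CDR10}. Your argument therefore has to stand on its own.

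Your reading of the hypothesis is correct and worth keeping. As printed, the statement omits the boundedness of $\gamma_+^{-1}P^+\gamma_-^{-1}I$ on $L^2$, and without it the ``if'' direction is false. For example, take $\gamma=\ol h/h$ with $h$ outer, $|h|^{\pm 2}\in L^1$ but $|h|^2\notin A_2$. Then $T_\gamma$ is injective with dense range but not invertible, hence not Fredholm. With your reading, the following parts are fine: your sufficiency argument, the proof that $\gamma\in\G L^\infty$, and the reduction (index shift plus Coburn's lemma) to an invertible $T_\phi$ with $\phi=z^{-k}\gamma$.

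The gap is the step you yourself call the main obstacle. The remark about $\langle T_\phi^{-1}1,\cdot\rangle$, outerness and nearly invariant kernels is not an argument. The bookkeeping before it is also off: $\phi\gamma_+^{-1}$ is $\gamma_-$, not $\gamma_-^{-1}$, and the adjoint construction yields $\ol{\gamma_+}$ and $\ol{\gamma_-}^{\,-1}$, not $\ol{\gamma_-}$.

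The missing idea is a short Liouville-type argument. Put $u_+=T_\phi^{-1}1$ and $v_+=T_{\ol\phi}^{-1}1$. Then $\phi u_+=1+u_-$ and, conjugating $\ol\phi v_+=1+v_-$, $\phi\ol{v_+}=1+\ol{v_-}$, with $u_-,v_-\in H^2_-$. So $\phi u_+\in\ol{H^2_+}$ and $\phi\ol{v_+}\in H^2_+$, and
\[
u_+\cdot(\phi\ol{v_+})=(\phi u_+)\cdot\ol{v_+}\in H^1_+\cap\ol{H^1_+}=\CC .
\]
The constant is $c=u_+(0)$, and it is nonzero. If $u_+(0)=0$, then $\phi\,\ol z u_+=\ol z(1+u_-)\in H^2_-$, so $\ol z u_+\in\ker T_\phi=\{0\}$, contradicting $T_\phi u_+=1$. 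Now set $\gamma_+:=u_+^{-1}=c^{-1}\phi\ol{v_+}\in H^2_+$ and $\gamma_-:=\phi u_+\in\ol{H^2_+}$, with $\gamma_-^{-1}=c^{-1}\ol{v_+}\in\ol{H^2_+}$. This gives $\phi=\gamma_-\gamma_+$ with all four functions in the required classes. Outerness is a consequence here, not an input.

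Under your own reading, necessity must also deliver the boundedness condition, which you never address. It follows from the identity behind your sufficiency part. For every analytic polynomial $p$ one has $T_\phi\bigl(\gamma_+^{-1}P^+(\gamma_-^{-1}p)\bigr)=p$, using only the factorization. Hence $\gamma_+^{-1}P^+\gamma_-^{-1}$ coincides with the bounded operator $T_\phi^{-1}$ on analytic polynomials. It also vanishes on polynomials in $\ol z$ with zero constant term, so it is bounded. Finally, $\gamma=z^k\phi=\gamma_-z^k\gamma_+$, with $\ind T_\gamma=-k$ by your choice of $k$.
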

\begin{thm}\label{thm:A3jul30} \cite{CDR10}
If $G \in (L^\infty)^{2 \times 2}$ with $\det G=\gamma$, satisfying \eqref{eq:A6jul30}
and if there exist $\phi_\pm \in \CP^\pm$ such that
\beq\label{eq:A7jul30}
G\phi_+=\phi_-,
\eeq
then $T_G$ is invertible, injective, or surjective if and only if $T_\gamma$ is invertible $(k=0)$,
injective $(k \ge 0)$, or surjective $(k \le 0)$, respectively.
\end{thm}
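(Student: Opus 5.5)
The idea is to use the corona pairs to factor $G$ into an invertible matrix function times a simple triangular one, and then pass to the determinant. Write $\phi_+=(\phi_{1+},\phi_{2+})^T\in\CP^+$ and $\phi_-=(\phi_{1-},\phi_{2-})^T\in\CP^-$. By the Corona Theorem there are $\wt\phi_+\in(H^\infty)^2$ with $\wt\phi_+^T\phi_+=1$, and $\wt\phi_-\in(\ol{H^\infty})^2$ with $\wt\phi_-^T\phi_-=1$.

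First, complete these to matrices. Set
\[
M_+=\begin{pmatrix}\phi_{1+} & -\wt\phi_{2+}\\ \phi_{2+} & \wt\phi_{1+}\end{pmatrix},\qquad
M_-=\begin{pmatrix}\phi_{1-} & -\wt\phi_{2-}\\ \phi_{2-} & \wt\phi_{1-}\end{pmatrix}.
\]
Then $\det M_\pm=\wt\phi_\pm^T\phi_\pm=1$, so $M_+^{\pm1}\in(H^\infty)^{2\times2}$ and $M_-^{\pm1}\in(\ol{H^\infty})^{2\times 2}$. Consider $N=M_-^{-1}GM_+$. Its first column is $M_-^{-1}G\phi_+=M_-^{-1}\phi_-=(1,0)^T$, so
\[
N=\begin{pmatrix}1 & a\\ 0 & b\end{pmatrix},\qquad a,b\in L^\infty .
\]
Taking determinants gives $b=\det N=\det G=\gamma$.

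Next, use the standard identity $T_{M_-^{-1}}T_{G}T_{M_+}=T_{M_-^{-1}GM_+}$, which holds because $M_+$ is analytic and $M_-^{-1}$ is antianalytic. The factors $T_{M_+}$ and $T_{M_-^{-1}}$ are invertible, with inverses $T_{M_+^{-1}}$ and $T_{M_-}$. Hence $T_G\sim T_N$. Equivalence preserves invertibility, injectivity and surjectivity, so it suffices to prove the claim for $T_N$.

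Now analyse the triangular operator. For $(x,y)\in (H^2_+)^2$,
\[
T_N(x,y)=(x+T_a y,\;T_\gamma y).
\]
This factors as
\[
T_N=\begin{pmatrix} I & T_a\\ 0& I\end{pmatrix}\begin{pmatrix} I & 0\\ 0& T_\gamma\end{pmatrix},
\]
and the first factor is invertible. Therefore $T_N\sim I\oplus T_\gamma$, and so $T_G$ is invertible, injective or surjective exactly when $T_\gamma$ is.

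Finally, use the factorization \eqref{eq:A6jul30} of $\gamma$. When $k\ge0$, $T_\gamma$ is injective: if $\gamma f_+\in H^2_-$, then $z^k\gamma_+f_+\in\ol{\gamma_-}^{-1}$-type antianalytic Smirnov functions, which forces $\gamma_+ f_+=0$ by the usual Smirnov-class argument. Surjectivity for $k\le0$ and invertibility for $k=0$ follow from the same argument applied to the adjoint $T_{\ol\gamma}$. The converses come from the Fredholm index $-k$: if $T_\gamma$ is injective and $k<0$, then $T_\gamma$ is Fredholm with index $-k>0$ and trivial kernel, which is impossible; the surjective case is analogous.

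The main obstacle is this last step, at the $L^2$ (rather than bounded) factorization level. There the standard argument via Smirnov classes is needed to get the kernel dimension $\max(-k,0)$; I would cite the classical result for scalar Toeplitz operators with $L^2$ Wiener--Hopf factorization rather than reprove it.
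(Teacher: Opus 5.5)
The paper does not prove this statement; it only quotes it from the literature. So your argument has to stand on its own, and its main part does. The key facts all hold:
\begin{itemize}
\item $\det M_\pm=\wt\phi_\pm^T\phi_\pm=1$;
\item the first column of $N=M_-^{-1}GM_+$ is $M_-^{-1}\phi_-=e_1$, and $\det N=\gamma$;
\item $T_{M_-^{-1}}T_GT_{M_+}=T_N$, where $T_{M_+}$ and $T_{M_-^{-1}}$ are invertible with inverses $T_{M_+^{-1}}$ and $T_{M_-}$.
\end{itemize}
There is one slip. The product you wrote equals $\begin{pmatrix} I & T_aT_\gamma\\ 0 & T_\gamma\end{pmatrix}$, not $T_N$. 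The correct identity is $T_N=\begin{pmatrix} I&0\\ 0&T_\gamma\end{pmatrix}\begin{pmatrix} I&T_a\\ 0&I\end{pmatrix}$, with the invertible unipotent factor on the right. With this fix, $T_G\sim I\oplus T_\gamma$. That does not use the factorization \eqref{eq:A6jul30} at all, and gives more than claimed: isomorphic kernels and cokernels, and simultaneous Fredholmness. So the equivalence between invertibility, injectivity and surjectivity of $T_G$ and of $T_\gamma$ is fully established.

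The gap is in the last step, the characterisation in terms of $k$.
\begin{itemize}
\item \textbf{Dense range is not surjectivity.} You claim surjectivity for $k\le0$ follows from the same argument applied to $T_{\ol\gamma}$. That argument only gives $\ker T_{\ol\gamma}=\{0\}$, i.e.\ $\ran T_\gamma$ is dense. Surjectivity also needs closed range, and a bare $L^2$ factorization does not provide it.
\item \textbf{Counterexample.} Take $\gamma=\ol{h_+}/h_+$ with $h_+$ outer, $|h_+|^{\pm2}\in L^1$ but $|h_+|^2\notin A_2$. It is unimodular and has such a factorization with $k=0$ ($\gamma_-=\ol{h_+}$, $\gamma_+=h_+^{-1}$). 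Yet $T_\gamma$ is injective with dense range and not invertible.
\item \textbf{How to read the hypothesis.} \eqref{eq:A6jul30} must be read as in the preceding theorem: with $\gamma\in\G L^\infty$, and as the factorization that makes $T_\gamma$ Fredholm of index $-k$. You already invoke this for your converses.
\item \textbf{The fix.} Use Fredholmness in both directions together with Coburn's lemma: for $\gamma\neq0$, one of $\ker T_\gamma$, $\ker T_{\ol\gamma}$ is trivial. Index $-k$ then gives directly: injective iff $k\ge0$; surjective (closed range plus trivial cokernel) iff $k\le0$; invertible iff $k=0$. The vague Smirnov-class argument becomes unnecessary.
\item \textbf{Converses without the index.} For $k<0$, $0\ne\gamma_+^{-1}\in\ker T_\gamma$. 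For $k>0$, $0\ne\ol{\gamma_-}^{-1}\in\ker T_{\ol\gamma}$.
\end{itemize}

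Finally, the obstacle you name at the end is misplaced. The kernel dimension $\max(-k,0)$ follows from the $L^2$ factorization alone. What genuinely requires the stronger, Fredholm-type factorization is closedness of the range.
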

Moreover, we have:
\begin{thm}\label{thm:A4jul30}
Let $G \in (L^\infty)^{2 \times 2}$ with $\det G=\gamma$, satisfying \eqref{eq:A6jul30} with $k=0$ (called a canonical Wiener--Hopf factorization in $L^2$) and let $\phi_+ \in (H^\infty)^2$ and $\phi_- \in (\ol{H^\infty})^2$ satisfy \eqref{eq:A7jul30}. If $\phi_+ \in \CP^+$
(respectively, $\phi_- \in \CP^-$) then $T_G$ is invertible if and only if $\phi_- \in \CP^-$
(respectively,  $\phi_+ \in \CP^+$).
\end{thm}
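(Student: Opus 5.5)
One direction comes straight from Theorem~\ref{thm:A3jul30}; for the other I would show that invertibility of $T_G$ forces the missing corona condition by an explicit construction.

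\emph{Sufficiency.} Suppose $\phi_+\in\CP^+$ and $\phi_-\in\CP^-$. Then \eqref{eq:A7jul30} holds with corona pairs on both sides, and $\gamma$ has a canonical factorisation ($k=0$). Theorem~\ref{thm:A3jul30} gives that $T_G$ is invertible. The same argument applies when $\phi_-\in\CP^-$ is the hypothesis and $\phi_+\in\CP^+$ is the extra condition.

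\emph{Necessity.} Assume $\phi_+\in\CP^+$ and $T_G$ invertible; we must show $\phi_-\in\CP^-$. By the Corona Theorem choose $\wt\phi_+\in(H^\infty)^2$ with $\wt\phi_+^T\phi_+=1$. Complete $\phi_+$ to a matrix
\[
\Phi_+=\begin{pmatrix}\phi_{1+} & -\wt\phi_{2+}\\ \phi_{2+} & \wt\phi_{1+}\end{pmatrix},
\]
which has $\det\Phi_+=1$, so $\Phi_+^{\pm1}\in(H^\infty)^{2\times2}$. Then
\[
G\Phi_+=\begin{pmatrix}\phi_- & G\psi_+\end{pmatrix}, \qquad \psi_+=(-\wt\phi_{2+},\wt\phi_{1+})^T .
\]
Since $T_{\Phi_+}$ is invertible, $T_{G\Phi_+}=T_G T_{\Phi_+}$ is invertible. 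Write $\gamma=\gamma_-\gamma_+$ for the canonical factorisation of $\det G$; then $\det(G\Phi_+)=\gamma$.

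Now suppose, for contradiction, that $\phi_-\notin\CP^-$. Then $\ol{\phi_-}$ is a bounded analytic pair with $\inf_{\DD}(|\ol{\phi_{1-}}|+|\ol{\phi_{2-}}|)=0$.

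\emph{Route via a left inverse.} Invertibility of $T_{G\Phi_+}$ implies that $G\Phi_+$ admits a canonical Wiener--Hopf factorisation in $L^2$, say $G\Phi_+=W_-W_+$ with $W_\pm^{\pm1}$ in the appropriate $H^2$ classes. Then $W_+^{-1}$ applied to the first column should produce a left inverse of $\phi_-$ in $\ol{H^\infty}$, namely the first row of $W_-^{-1}$ composed suitably. That would contradict the failure of the corona condition.

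\emph{Alternative route via approximate eigenvectors.} Take points $z_n\in\DD$ where $\ol{\phi_-}(\ol{z_n})\to0$. Use normalised reproducing kernels $k_{z_n}$ to build test functions $u_n=P^+(\ol{G}^{T}\text{-adjusted}\ \cdots)$, or apply $T_{G\Phi_+}^*$ to $\ol{?}$-weighted kernels, to show that $\|T_{G\Phi_+}^*v_n\|\to0$ with $\|v_n\|=1$. This would contradict invertibility. Concretely, the candidate is $v_n=(\ol{\wt h}\ \text{row})\,k_{z_n}$ chosen orthogonal to the second column image, so that the contraction comes from $\phi_-$ being small at $z_n$. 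The determinant identity $\phi_{1-}(G\psi_+)_2-\phi_{2-}(G\psi_+)_1=\gamma$ links the two columns and lets $\gamma_-$ be divided out.

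The main obstacle is this necessity step. Making the $L^2$ factorisation produce a bounded ($\ol{H^\infty}$) left inverse for $\phi_-$, rather than merely an $H^2$ one, is the delicate point. I would try first the determinant/adjugate trick: write $\phi_-=\gamma\,(\text{second column})^{\perp}$-type relations after multiplying by $\gamma_-^{-1}$, and use Theorem~\ref{thm:A3jul30} in the reverse roles by swapping $G$ for $(G^T)^{-1}$, so that the roles of $\phi_+$ and $\phi_-$ interchange. The statement with the roles of $\phi_\pm$ reversed then follows by applying the result to $\ol{G}^{-1}$, or to the adjoint $T_{G^*}$, after conjugation.
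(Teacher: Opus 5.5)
Your sufficiency half is fine: it is Theorem~\ref{thm:A3jul30} with $k=0$. The necessity half, which is the real content of the statement, is not proved: you sketch two routes and complete neither.

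The factorisation route does not work as stated. From $G\Phi_+=W_-W_+$ you get $W_-^{-1}\phi_-=W_+(1,0)^T=c$, a nonzero constant. So $rW_-^{-1}$ (with $rc=1$) is a left inverse of $\phi_-$, but its entries lie only in $\ol{H^2_+}$. An $\ol{H^2}$ left inverse does not give the corona condition. For example, $\phi_-=(\ol h,0)$ with $h=(1-z)^{1/4}$ has the left inverse $(\ol{1/h},0)$, yet $\inf_{\DD}|h|=0$. Your second route has the right ingredient (reproducing kernels at points where $\ol{\phi_-}$ is small), but you aim it at the adjoint with unspecified test vectors. The determinant identity and $\gamma_-$ play no role in this direction.

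The missing step is short and uses only that $T_G$ is bounded below. Write $\phi_-=\ol h$ with $h\in(H^\infty)^2$. Recall that $P^+(\ol{h_j}k_z)=\ol{h_j(z)}\,k_z$ for the reproducing kernel $k_z$, $z\in\DD$. Then
\[
T_G(\phi_+k_z)=P^+(\phi_-k_z)=\ol{h(z)}\,k_z,\qquad \|\phi_+k_z\|\ge\|\wt\phi_+\|_\infty^{-1}\|k_z\|,
\]
the second inequality because $\wt\phi_+^T\phi_+=1$. Hence $|h(z)|\ge\|T_G^{-1}\|^{-1}\|\wt\phi_+\|_\infty^{-1}$ for all $z\in\DD$, that is, $\phi_-\in\CP^-$. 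Equivalently, test $T_{G\Phi_+}$ on $(k_z,0)^T$. No contradiction argument is needed, and the hypothesis on $\gamma$ is not used here.

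For the case with the roles of $\phi_\pm$ exchanged, you only assert that some symmetry does the job. One does, but it must be justified:
\begin{enumerate}
\item Invertibility of $T_G$ forces $G$ to be invertible in $(L^\infty)^{2\times2}$.
\item For invertible $A$ and $Q=I-P$, the standard lemma says $PAP$ is invertible on $\ran P$ iff $QA^{-1}Q$ is invertible on $\ran Q$. Hence $T_G$ is invertible iff $P^-G^{-1}P^-$ is invertible on $(H^2_-)^2$.
\item The antiunitary map $f\mapsto\ol z\,\ol f$ carries $P^-G^{-1}P^-$ onto $T_{\ol G^{-1}}$.
\end{enumerate}
Then apply the first case to $\ol G^{-1}\ol{\phi_-}=\ol{\phi_+}$, where $\ol{\phi_-}\in\CP^+$, to obtain $\phi_+\in\CP^+$.
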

These results can be applied to a Toeplitz operator with matrix symbol of the form
\beq\label{eq:A8jul30}
G  = \begin{pmatrix}
\ol\theta & 0 \\ g & \theta
\end{pmatrix},
\eeq
which is EAE to the TTO $A^\theta_g$, taking into account that $\gamma=\det G=1$ satisfies 
\eqref{eq:A6jul30} with $k=0$. We present here two examples, which will illustrate this and which
will be related with dual truncated Toeplitz operators, defined on the orthogonal complement of a model space, in Section~\ref{sec:4jul30}.

\begin{prop}\label{prop:A5aug3}
Let $G$ be defined by \eqref{eq:A8jul30}, with 
\beq\label{eq:A9jul30}
g^{-1} \in H^\infty  \quad \hbox{and} \quad \ol\theta g^{-1} \in \ol{H^\infty}.
\eeq
Then $T_G$ and $A^\theta_g$ are invertible if and only if $g \in \G H^\infty$.
\end{prop}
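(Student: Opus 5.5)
The plan is to apply Theorem~\ref{thm:A4jul30} to the symbol $G$ in \eqref{eq:A8jul30}. To do this I need an explicit pair $\phi_+\in(H^\infty)^2$, $\phi_-\in(\ol{H^\infty})^2$ with $G\phi_+=\phi_-$ such that one of the two is obviously a corona pair. The result for $A^\theta_g$ then follows from the equivalence after extension of Theorem~\ref{thm:1}, because invertibility is preserved under EAE.

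\emph{Step 1: the determinant.} Since $\det G=\ol\theta\theta=1$, the function $\gamma=1$ satisfies \eqref{eq:A6jul30} with $k=0$ and $\gamma_\pm=1$. So the hypothesis of a canonical Wiener--Hopf factorization in $L^2$ in Theorem~\ref{thm:A4jul30} holds.

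\emph{Step 2: the pair $\phi_\pm$.} This is where \eqref{eq:A9jul30} is used. Take
\[
\phi_+=\begin{pmatrix} g^{-1}\\ 0\end{pmatrix}\in (H^\infty)^2 .
\]
A direct computation gives
\[
G\phi_+=\begin{pmatrix}\ol\theta g^{-1}\\ g\,g^{-1}\end{pmatrix}=\begin{pmatrix}\ol\theta g^{-1}\\ 1\end{pmatrix}=:\phi_- .
\]
The first entry lies in $\ol{H^\infty}$ by hypothesis and the second is the constant $1$, so $\phi_-\in(\ol{H^\infty})^2$. Moreover $\phi_-\in\CP^-$ trivially, since $|\ol\theta g^{-1}|+|1|\ge 1$ everywhere in the complement of the disc.

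\emph{Step 3: the invertibility criterion.} By Theorem~\ref{thm:A4jul30}, with the roles of $\phi_-\in\CP^-$ as the known corona pair, $T_G$ is invertible if and only if $\phi_+\in\CP^+$. This means $\inf_{z\in\DD}|g^{-1}(z)|>0$, since the second component vanishes. It remains to identify this condition with $g\in\G H^\infty$.
\begin{itemize}
\item If $|g^{-1}|\ge\delta>0$ on $\DD$, then $g=(g^{-1})^{-1}$ is bounded and analytic in $\DD$, so $g\in H^\infty$ with inverse $g^{-1}\in H^\infty$.
\item Conversely, if $g\in\G H^\infty$, then $|g^{-1}|\ge 1/\|g\|_\infty$ on $\DD$.
\end{itemize}
The boundary values are consistent with $g$ as an $L^\infty$ function, because $g\cdot g^{-1}=1$ a.e. on $\TT$.

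\emph{Step 4: transfer to $A^\theta_g$.} Theorem~\ref{thm:1} gives $A^\theta_g\simast T_G$. Since one of two operators that are EAE is invertible exactly when the other is, the same criterion holds for $A^\theta_g$.

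I do not expect any step to be a real obstacle. The only point needing care is guessing the pair $\phi_\pm$ in Step~2, and \eqref{eq:A9jul30} is tailored exactly to make $(g^{-1},0)$ work. A second point of care is making sure the identification $\phi_+\in\CP^+\iff g\in\G H^\infty$ is stated in terms of the analytic extension to $\DD$, not merely a.e. on $\TT$.
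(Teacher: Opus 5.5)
Your proposal is correct and is essentially the paper's proof: the same test vector $(g^{-1},0)$, the identity $G(g^{-1},0)^T=(\ol\theta g^{-1},1)^T$ with $(\ol\theta g^{-1},1)\in\CP^-$, and Theorem~\ref{thm:A4jul30} applied with $\det G=1$. You also make explicit the transfer to $A^\theta_g$ via Theorem~\ref{thm:1}, and where the paper writes $(g,0)\in\CP^+$, your condition $(g^{-1},0)\in\CP^+$ is what Theorem~\ref{thm:A4jul30} literally yields; both are equivalent to $g\in\G H^\infty$.
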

\beginpf
We have that
\[
\begin{pmatrix}
\ol\theta & 0 \\ g & \theta
\end{pmatrix}
\begin{pmatrix}
g^{-1} \\ 0 
\end{pmatrix} =\begin{pmatrix}
\ol\theta g^{-1} \\ 1 
\end{pmatrix},
\]
where $(\ol\theta g^{-1},1) \in \CP^-$, so, by Theorems~\ref{thm:A3jul30}
and \ref{thm:A4jul30}, $T_G$ is invertibke if and only if $(g,0) \in \CP^+$, which is
equivalent to $g \in \G H^\infty$.
\end{proof}

The assumption \eqref{eq:A9jul30} is satisfied, in particular, if
$g^{-1} \in K_{z\theta} \cap L^\infty$.\\

Using the previous results relating the invertibility and Fredholmness of $2 \times 2$ block Toeplitz operators with the solutions of
certain corona problems and the EAE of TTO with Toeplitz operators with symbols
of the form \eqref{eq:A8jul30}, one also obtains the following results for analytic symbols $g$ 
(or anti-analytic symbols, in which case it is enough to consider the adjoint operator).

\begin{thm}\label{thm:A6aug3}
Let $g \in H^\infty$ and denote by $g_i$ the inner factor of $g$. Then:
\\
(i) $A^\theta_g$ is Fredholm if and only if $\gamma=\GCD(\theta,g_i)$ is
a finite Blaschke product and $\ol\gamma(\theta,g) \in \CP^+$;
\\
(ii) $A^\theta_g$ is invertible if and only if $(\theta,g) \in \CP^+$;
\\
(iii) $\ker A^\theta_g = \frac{\theta}{\gamma} K_\gamma$.
\end{thm}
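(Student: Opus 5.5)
We use the equivalence after extension $A^\theta_g \simast T_G$ with $G=\begin{pmatrix}\ol\theta & 0\\ g & \theta\end{pmatrix}$ from Theorem~\ref{thm:1}. Invertibility and Fredholmness transfer between the two operators, and \eqref{eq:12ajun23} gives $\ker A^\theta_g = P_1\ker T_G$. Since $\det G=1$ satisfies \eqref{eq:A6jul30} with $k=0$, the results of \cite{CDR10} quoted above (Theorems~\ref{thm:A3jul30} and \ref{thm:A4jul30}) apply once we find a suitable solution of $G\phi_+=\phi_-$. Because $g\in H^\infty$, the natural choice is
\[
G\begin{pmatrix}\theta\\ -g\end{pmatrix}=\begin{pmatrix}1\\ 0\end{pmatrix},
\]
where $\phi_+=(\theta,-g)\in (H^\infty)^2$ and $\phi_-=(1,0)\in\CP^-$. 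Theorem~\ref{thm:A4jul30} then gives (ii) directly: $T_G$, and hence $A^\theta_g$, is invertible if and only if $(\theta,-g)\in\CP^+$, that is, $(\theta,g)\in\CP^+$.

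For (iii) I would compute the kernel directly rather than through $T_G$. We have $f\in K_\theta$ with $A^\theta_g f=0$ if and only if $gf\in\theta H^2_+$, since $gf\in H^2_+$ already. Write $\gamma=\GCD(\theta,g_i)$, $\theta=\gamma\theta_1$ and $g=\gamma g_1$, so that $\theta_1$ and the inner factor of $g_1$ are coprime. The condition becomes $g_1f\in\theta_1H^2_+$. Coprimeness gives $f\in\theta_1H^2_+$: divide by the outer part of $g_1$ in the Smirnov class, then use that $\theta_1$ divides $g_{1i}f_i$ and is coprime with $g_{1i}$. Conversely, every $f\in\theta_1H^2_+\cap K_\theta=\theta_1K_\gamma$ lies in the kernel, because $gf\in\gamma\theta_1 H^2_+$. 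Hence $\ker A^\theta_g=\frac{\theta}{\gamma}K_\gamma$.

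For (i), the kernel is finite-dimensional if and only if $\gamma$ is a finite Blaschke product. Since $A^\theta_g$ is unitarily equivalent to the adjoint-type operator $A^\theta_{\ol g}{}^*$ via the conjugation $C_\theta$ (TTOs are complex symmetric), the cokernel has the same dimension, so Fredholmness reduces to closed range together with $\gamma$ finite. To handle the range, factor out the finite part. Write $A^\theta_g$ in terms of $A^{\theta_1}_{g_1}$ acting on $K_{\theta_1}$: one has $K_\theta=K_{\theta_1}\oplus\theta_1K_\gamma$, and $\gamma$ is finite, so the difference is finite rank. Then $A^\theta_g$ is Fredholm if and only if $A^{\theta_1}_{g_1}$ is Fredholm, and a Fredholm TTO with trivial kernel is invertible. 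So (ii) applied to $(\theta_1,g_1)=\ol\gamma(\theta,g)$ gives the criterion $\ol\gamma(\theta,g)\in\CP^+$. The cleanest alternative is to apply the first theorem from \cite{CDR10} above (the $\CP^\pm_M$ Fredholm criterion), with $\phi_+=\gamma\cdot\ol\gamma(\theta,-g)$, where multiplication by the rational $\gamma$ is allowed in $\CP^+_M$.

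The main obstacle is the necessity direction in (i). If $\gamma$ is infinite, the kernel is infinite-dimensional, so $A^\theta_g$ is not Fredholm. If $\gamma$ is finite but $\ol\gamma(\theta,g)\notin\CP^+$, one must show the range is not closed. I would reduce to the invertibility statement (ii) via the finite-rank reduction above, or via the EAE with $T_G$ and the converse direction of Theorem~\ref{thm:A4jul30}; making this reduction precise is the delicate step.
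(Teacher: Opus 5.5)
Your argument for (ii) matches the paper's route: the EAE with $T_G$, the identity $G(\theta,-g)^T=(1,0)^T$ with $(1,0)\in\CP^-$, and Theorem~\ref{thm:A4jul30}. Your direct computation $\ker A^\theta_g=\theta_1K_\gamma$ in (iii) is correct and more elementary than extracting it from $P_1\ker T_G$. The complex-symmetry step in (i) is also fine once stated properly: $C_\theta A^\theta_g C_\theta=(A^\theta_g)^*=A^\theta_{\ol g}$ with $C_\theta$ antiunitary, so $\dim\ker (A^\theta_g)^*=\dim\ker A^\theta_g$.

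\textbf{The gap is the reduction in (i).} You assert that $A^\theta_g$ and $A^{\theta_1}_{g_1}\oplus 0$, relative to $K_\theta=K_{\theta_1}\oplus\theta_1K_\gamma$, differ by a finite-rank operator. That is false, because the compression of $A^\theta_g$ to $K_{\theta_1}$ is $A^{\theta_1}_{g}=A^{\theta_1}_{\gamma g_1}$, not $A^{\theta_1}_{g_1}$. For example, take $\theta=z\theta_1$ with $\theta_1$ an infinite Blaschke product, and $g=z$. Then $\gamma=z$, $g_1=1$, and the $(1,1)$ block of the difference is $A^{\theta_1}_{z-1}$. This operator is injective on the infinite-dimensional space $K_{\theta_1}$ (by your own (iii), since $z-1$ is outer), so it is not of finite rank. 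The $\CP^\pm_M$ criterion with $\phi_+=\gamma(\theta_1,-g_1)$ only gives the ``if'' direction, since it presupposes $\phi_+\in\CP^+_M$. So the necessity half of (i) rests entirely on this reduction, which you yourself flagged as delicate but did not carry out.

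The missing idea is to split off $\gamma$ as a Fredholm \emph{factor}, not a finite-rank perturbation.
\begin{enumerate}
\item For $u,v\in H^\infty$ one has $A^\theta_{uv}=A^\theta_uA^\theta_v$, because $(I-P_\theta)(vf)\in\theta H^2_+$ and $u\theta H^2_+\subset\theta H^2_+$. Hence $A^\theta_g=A^\theta_\gamma A^\theta_{g_1}$.
\item If $\gamma$ is a finite Blaschke product, $A^\theta_\gamma$ is Fredholm. It acts as multiplication by $\gamma$ on $K_{\theta_1}$ (since $\gamma K_{\theta_1}\subset K_\theta$) and annihilates $\theta_1K_\gamma$. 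So its kernel is $\theta_1K_\gamma$, and its range is the closed subspace $\gamma K_{\theta_1}$, of codimension $\dim K_\gamma<\infty$.
\item By Atkinson's theorem, $A^\theta_g$ is Fredholm if and only if $A^\theta_{g_1}$ is.
\item Only now is your finite-rank argument valid. The compression of $A^\theta_{g_1}$ to $K_{\theta_1}$ is $A^{\theta_1}_{g_1}$, and the complementary summand $\theta_1K_\gamma$ is finite-dimensional.
\item Since $\GCD(\theta_1,g_i/\gamma)=1$, (iii) gives $\ker A^{\theta_1}_{g_1}=\{0\}$, and complex symmetry gives index zero. Hence $A^{\theta_1}_{g_1}$ is Fredholm if and only if it is invertible, which by (ii) holds if and only if $(\theta_1,g_1)=\ol\gamma(\theta,g)\in\CP^+$.
\end{enumerate}
With this repair, your route to (i) is a valid alternative that stays inside TTO theory. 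The paper instead obtains all three parts from the EAE with $T_G$ and the corona-type Fredholm results for $2\times 2$ block Toeplitz operators quoted before the statement.
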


By studying the invertibility, Fredholmness, and kernel of block Toeplitz operators with symbols of the form \eqref{eq:A8jul30}, where $g$ is replaced by $g-\lambda$ for $\lambda\in \CC$,
one can also study the spectrum of the TTO $A^\theta_g$. This was done in
\cite{CP16}.\\

%
%

Furthermore, although the ATTO $A_G^{\theta,\alpha}$ can be the zero operator while $T_\G$
is never zero, one can reformulate the question of when $A_G^{\theta,\alpha}$ is zero
by asking when its kernel is equal to $K_\theta$ and using \eqref{eq:12ajun23}.

Indeed, this was answered by Jurasik and {\L}anucha~\cite{JL16},
the result for symmetric TTO having been given by 
 Sarason~\cite[Thm.~3.1]{sarason07}. 
We shall give an alternative and simpler proof, based on the ideas above.
\begin{thm}
If $G \in L^2$ then $A^{\theta,\alpha}_G=0$ if and only if $G \in \alpha H^2_+ \oplus \ol\theta \ol{H^2_+}$.
\end{thm}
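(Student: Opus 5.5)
The sufficiency direction is a direct computation. Suppose $G=\alpha h_+ + \ol\theta\,\ol{k_+}$ with $h_+,k_+\in H^2_+$, and let $f\in K_\theta\cap H^\infty$.

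For the first term, $\alpha h_+ f\in \alpha H^2_+$. This is legitimate: $h_+f\in H^2_+$ because $f$ is bounded.

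For the second term, since $f\in K_\theta$ we may write $\ol\theta f=\ol z\,\ol{\tilde f}$ with $\tilde f\in K_\theta$. Then $\ol\theta\,\ol{k_+}f=\ol z\,\overline{k_+\tilde f}$. If $\tilde f$ is bounded, this lies in $\ol z\,\ol{H^1_+}$ and also in $L^2$, so it belongs to $H^2_-$.

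Both terms therefore lie in $K_\alpha^\perp=H^2_-\oplus\alpha H^2_+$, as in \eqref{eq:27}, and so $A^{\theta,\alpha}_G f=0$. Boundedness of $\tilde f$ is cleanest if we test on reproducing kernels $k^\theta_w$, whose conjugations $\tilde k^\theta_w$ are bounded. These span a dense subspace, and that is enough to conclude that the (densely defined, bounded) operator vanishes.

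For necessity I will follow the kernel reformulation. Assume $A^{\theta,\alpha}_G=0$, so every $\phi_\theta\in K_\theta\cap H^\infty$ satisfies $G\phi_\theta\in H^2_-\oplus\alpha H^2_+$. The idea, as in Proposition~\ref{prop:2.2jul30}, is to feed in a well-chosen element. Take $\phi=k^\theta_0=1-\ol{\theta(0)}\theta$ and $S^*\theta=\ol z(\theta-\theta(0))$, both in $K_\theta\cap H^\infty$. These give
\[
G(1-\ol{\theta(0)}\theta)=\psi_1^- +\alpha\psi_1^+,\qquad G\,\ol z(\theta-\theta(0))=\psi_2^-+\alpha\psi_2^+ .
\]
Taking the combination $\theta(0)\cdot(\text{first})+(\text{second})\cdot z$ eliminates $G\theta$ and isolates $G(\theta(0)-\ldots)$. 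Cleaner still is to use the pair $1$ and $\theta$ directly, since $\theta(0)\cdot 1$ and $\theta$ are linked through these two kernel elements. The goal is to obtain an identity $G\cdot c=\text{(element of }H^2_-\oplus \alpha H^2_+) + \theta\cdot(\ldots)$, with $c$ a nonzero constant when $\theta(0)\neq0$. If needed, I first reduce to the case $\theta(0)=0$ or $\theta(0)\neq0$ by a Frostman/Crofoot-type change of $\theta$. Alternatively, I simply combine so that the inner-function terms cancel.

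A more robust route is the direct decomposition. Write $G=G_1+G_2+G_3$ with $G_1=P^-(\theta G)\,\ol\theta$-type pieces, or concretely as $G=\ol\theta\,\ol{u}+v+\alpha w$. Here $\ol\theta\,\ol u$ collects the part of $G$ in $\ol\theta\,\ol{H^2_+}$, $\alpha w$ collects the part in $\alpha H^2_+$, and the remainder $v$ lies in $(\alpha H^2_+ + \ol\theta\,\ol{H^2_+})^\perp$. That remainder is the space $K_\alpha\cap\theta\ldots$, or more precisely one can take $v\in K_\alpha\oplus \ol{zK_\theta}$-type finite pieces. By the sufficiency part, $A^{\theta,\alpha}_{v}=0$. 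It then remains to show that an ATTO whose symbol lies in $K_\alpha+\ol{K_{\theta}}$ (the standard "reduced" symbol space) vanishes only if the symbol is zero.

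That last step is the main obstacle. To handle it, I apply $A_v$ to $k^\theta_0$ and compute $\langle A_v k^\theta_w, k^\alpha_\lambda\rangle$. This produces an expression in $v_1(\lambda)$, $\ol{v_2(w)}$ and kernel values. Setting $w=0$ and varying $\lambda$ forces $v_1$ to be a multiple of $k^\alpha_0$-type functions, and symmetrically for $v_2$. The leftover one-dimensional ambiguity, the constant shared by $K_\alpha$ and $\ol{K_\theta}$, is absorbed because $k^\alpha_0-\ol{\ldots}$ lies in $\alpha H^2_+\oplus\ol\theta\,\ol{H^2_+}$ up to constants. With that bookkeeping, the remainder is in the required space, and necessity follows.
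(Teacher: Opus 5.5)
Your sufficiency argument is fine. The detour through reproducing kernels is unnecessary, since $|\tilde f|=|f|$ a.e.\ on $\TT$, so $\tilde f$ is bounded whenever $f$ is. The necessity direction, however, is not proved. Your first attempt starts exactly as the paper does, with the test functions $k^\theta_0$ and $S^*\theta$. But the combination you write down, $\theta(0)\cdot(\text{first})+z\cdot(\text{second})$, equals $(1-|\theta(0)|^2)\,G\theta$: it eliminates $G$, not $G\theta$. The combination that works is $(\text{first})+\ol{\theta(0)}\,z\cdot(\text{second})$, which gives
\[
(1-|\theta(0)|^2)\,G=\psi_1^-+\alpha\psi_1^+ +\ol{\theta(0)}\,z\,(\psi_2^-+\alpha\psi_2^+).
\]
The missing step is then to apply $P^+$. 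Since $zH^2_-=\ol{H^2_+}$, we have $P^+(z\psi_2^-)\in\CC$, and $|\theta(0)|<1$; so you get $P^+G\in\alpha H^2_++\CC$ for every value of $\theta(0)$. No Frostman or Crofoot reduction is needed. The ``pair $1$ and $\theta$'' is not available anyway, because $\theta\notin K_\theta$ and $1\in K_\theta$ only if $\theta(0)=0$. The same argument applied to the adjoint $A^{\alpha,\theta}_{\ol G}=0$ gives $P^-G\in\ol\theta\,\ol{H^2_+}+\CC$. Hence $G=\alpha g_{1+}+\ol\theta\,\ol{g_{2+}}+c$ with $c\in\CC$. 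By sufficiency $A^{\theta,\alpha}_c=0$, and $c\neq0$ would force $K_\theta\subseteq\alpha H^2_+$. That is impossible for nonconstant $\alpha$, because $k^\theta_0$ is outer. None of this appears in your proposal.

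Your ``more robust route'' does not fill the gap. Its reduction claim, that an ATTO whose symbol lies in $K_\alpha+\ol{K_\theta}$ vanishes only for the zero symbol, is false. The function $k^\alpha_0-\ol{k^\theta_0}=\theta(0)\ol\theta-\ol{\alpha(0)}\alpha$ lies in $K_\alpha+\ol{K_\theta}$ and also in $\alpha H^2_++\ol\theta\,\ol{H^2_+}$. So it gives the zero operator, yet it is nonzero as soon as $\alpha(0)$ or $\theta(0)$ is nonzero. What you would actually need is that $v\perp(\alpha H^2_++\ol\theta\,\ol{H^2_+})$ and $A^{\theta,\alpha}_v=0$ together imply $v=0$, which is just a restatement of the theorem. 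Nor does the $w=0$ computation do what you claim. For $v=v_1+\ol{v_2}$ with $v_1\in K_\alpha$ and $v_2\in K_\theta$,
\[
A^{\theta,\alpha}_v k^\theta_0=v_1+\ol{v_2(0)}\,k^\alpha_0-\ol{\theta(0)}\,P_\alpha\bigl(\theta v_1+\theta\ol{v_2}\bigr).
\]
This determines $v_1$ as a multiple of $k^\alpha_0$ only when $\theta(0)=0$. Disposing of the last term requires precisely the second test function $S^*\theta$ together with the projection step described above.
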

\beginpf
Clearly, if $\phi \in K_\theta$ and $G = \alpha h_1+  \ol\theta \ol{h_2}$
for $h_1,h_2 \in H^2_+$, then
we have $ G \phi = \alpha( h_1 \phi) + \ol{h_2}( \ol\theta \phi) \in \alpha H^2_+ + H^2_-$,
so that $A^{\theta,\alpha}_G \phi=0$.

Conversely, $A^{\theta,\alpha}_G=0$ if and only if $\ker A^{\theta,\alpha}_G = K_\theta$; that is,
$P_1 \ker T_\G = K_\theta$, 
which is equivalent to
\[
\forall \phi_{1+} \in K_\theta \quad \exists \phi_{1-},\phi_{2-} \in H^2_-, \phi_{2+} \in H^2_+ :
\quad \begin{pmatrix} \ol\theta & 0 \\ G & \alpha \end{pmatrix} \begin{pmatrix}\phi_{1+} \\ \phi_{2_+} \end{pmatrix} =  \begin{pmatrix}\phi_{1-} \\ \phi_{2_-} \end{pmatrix},
\]
i.e.,
\[
\forall \phi_{1+} \in K_\theta \quad \exists \phi_{2 \pm} \in H^2_{\pm}: \quad G \phi_{1+} + \alpha \phi_{2_+} = \phi_{2-}. 
\]
We now use the test functions $\phi_{1+}=S^* \theta = \frac{\theta -\theta(0)}{z}$
and $\phi_{1+} = k^\theta_0 = 1-\ol{\theta(0)}\theta$ (the reproducing kernel at $0$). Thus
there exist $\phi_{2\pm}$ and $\psi_{2\pm} \in H^2_\pm$ such that
\begin{align}
G(\theta-\theta(0))+ z \alpha \phi_{2+}  &= z\phi_{2-},\label{eq:a35}
\\
G(1-\ol{\theta(0)}\theta) + \alpha \psi_{2+} &= \psi_{2-}.\label{eq:a36}
\end{align}
Taking the linear combination
$\ol{\theta(0)}\times$\eqref{eq:a35}$+$\eqref{eq:a36},
we obtain
\begin{align}
G(1-\theta(0)\ol{\theta(0)}) & = -\ol{\theta(0)}z\alpha \phi_{2+}+\ol{\theta(0)}z\phi_{2-} -\alpha \psi_{2+}+\psi_{2-}\label{eq:a37}.
\end{align}
On applying $P^+$ to  \eqref{eq:a37} 
we obtain
\begin{align}
(1-\theta(0)\ol{\theta(0)}) P^+ G &= -\alpha(z  \ol{\theta(0)}\phi_{2+}+\psi_{2+})+ \ol{\theta(0)}P^+(z \phi_{2-}), \label{eq:a39}
\end{align}
That is, $P^+ G \in \alpha H^2_+ + \CC$. Now the adjoint of $A^{\theta,\alpha}_G$ is
$A^{\alpha,\theta}_{\ol G}$ and it is also the zero operator, and we conclude that
$P^+ \ol G \in \theta H^2_+ + \CC$, or equivalently $P^- G \in \ol \theta \ol{H^2_+}+\CC$.

We may therefore write 
\[
G= \alpha g_{1+}+ \ol\theta \ol{g_{2+}} + k
\]
with $g_{1+}, g_{2+} \in H^2_+$ and $k \in \CC$. The first two terms give zero ATTO, as we
have already seen. If $A^{\theta,\alpha}_k=0$ and $k\ne 0$ then $K_\theta \subseteq \alpha H^2_+$: however, this is not possible, since $K_\theta$ is a Toeplitz kernel.
Thus $k=0$ and the result follows.
\endpf

From the proof above we have the surprising consequence.

\begin{cor}
If $\ker A^\theta_G$ contains both the functions $k^\theta_0=1-\ol{\theta(0)}\theta$ and $\wt k^\theta_0= S^*\theta$, then $\ker A^\theta_G= K_\theta$.
\end{cor}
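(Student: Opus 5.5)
The plan is to reread the preceding proof with $\alpha=\theta$, noting that it used only two test functions. That proof never used the hypothesis $A^{\theta,\alpha}_G=0$ on all of $K_\theta$ to reach the decomposition of $P^+G$. It used only that $S^*\theta$ and $k^\theta_0$ lie in the kernel, via \eqref{eq:12ajun23}. So, assuming $k^\theta_0,\wt k^\theta_0\in\ker A^\theta_G$, I will write down \eqref{eq:a35} and \eqref{eq:a36} with $\alpha=\theta$. I then take the same linear combination and apply $P^+$ as in \eqref{eq:a39}. Since $|\theta(0)|<1$ for nonconstant $\theta$, this gives $P^+G\in\theta H^2_++\CC$. (If $\theta$ is a unimodular constant, then $K_\theta=\{0\}$ and the statement is trivial.)

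The main obstacle is the anti-analytic part $P^-G$. In the theorem this came from the adjoint also being zero, which is no longer available. I will get it directly from the same two relations instead. In \eqref{eq:a35} and \eqref{eq:a36}, apply $P^-$ instead of $P^+$. The key observation is that $\theta\phi_{2+}$ and $\theta\psi_{2+}$ lie in $H^2_+$, so they drop out, leaving information about $P^-(G\theta)$ and $P^-G$.

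Concretely, I write $\theta G=(1-\ol{\theta(0)}\theta)^{-1}\cdots$; more simply, I combine the two equations to isolate $G\theta$ rather than $G$. The combination \eqref{eq:a35}$\,+\,\theta(0)\times$\eqref{eq:a36} gives
\[
G\theta(1-|\theta(0)|^2)\in z H^2_-+H^2_-+\theta H^2_+ .
\]
Hence $G\theta\in H^2_-+\theta H^2_++\CC\, P^+(z\phi_{2-})$, i.e.\ $P^+(G\theta)\in\theta H^2_++\CC$. Taking conjugates and multiplying by $\ol\theta$, this should translate into $P^-G\in\ol\theta\,\ol{H^2_+}+\CC$, up to checking the constant term carefully. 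This is the step I expect to need the most care: $P^+(\theta G)$ versus $\ol\theta$ times the conjugate of $P^-G$ must be matched exactly, and the leftover constant $c\cdot P^+(z\phi_{2-})=c\,\phi_{2-}$'s leading coefficient must be tracked.

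Once both decompositions hold, I will write $G=\theta g_{1+}+\ol\theta\,\ol{g_{2+}}+k$ exactly as in the theorem, with $g_{1+},g_{2+}\in H^2_+$ and $k\in\CC$. Then $A^\theta_G=A^\theta_k=kI$. Since $k^\theta_0\neq0$ lies in the kernel, $k=0$, so $A^\theta_G=0$ and $\ker A^\theta_G=K_\theta$. If the $P^-$ computation resists, the fallback is a symmetry argument: the conjugation $C_\theta f=\theta\ol z\ol f$ on $K_\theta$ swaps $k^\theta_0$ and $S^*\theta$ up to scalars, and TTOs are $C_\theta$-symmetric. So the kernel of $(A^\theta_G)^*=A^\theta_{\ol G}$ contains the same two functions, and the adjoint argument of the theorem then applies verbatim.
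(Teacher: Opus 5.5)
Your proposal is correct, and its main line differs from the paper's at the one nontrivial point.

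The paper only says the corollary follows ``from the proof above''. In that proof the two test functions give $P^+G\in\theta H^2_++\CC$, but the anti-analytic half came from running the same argument on the adjoint. So for the corollary one implicitly needs $k^\theta_0,S^*\theta\in\ker (A^\theta_G)^*$, and that is exactly your fallback: $C_\theta k^\theta_0=\ol z(\theta-\theta(0))=S^*\theta$ exactly, and $(A^\theta_G)^*=C_\theta A^\theta_G C_\theta$.

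Your primary route instead extracts $P^-G$ from the same two equations \eqref{eq:a35}, \eqref{eq:a36}. The step you left tentative closes in one line, with no conjugation needed. Multiplying your combination by $\ol\theta$ gives
\[
(1-|\theta(0)|^2)\,G=\ol\theta\,\bigl(z\phi_{2-}+\theta(0)\psi_{2-}\bigr)-\bigl(z\phi_{2+}+\theta(0)\psi_{2+}\bigr).
\]
Since $zH^2_-=\ol{H^2_+}\supset H^2_-$, the first term lies in $\ol\theta\,\ol{H^2_+}$, whose image under $P^-$ is contained in $\ol\theta\,\ol{H^2_+}+\CC$ (remove the constant term). The second term is killed by $P^-$. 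So the constant you were worried about is harmless.

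What each route buys:
\begin{itemize}
\item Your direct route is self-contained, with no appeal to Sarason's $C_\theta$-symmetry. It also shows why two test functions suffice in the symmetric case. For $\alpha\ne\theta$ the same combination leaves the term $\ol\theta\alpha\,(z\phi_{2+}+\theta(0)\psi_{2+})$, whose anti-analytic part is uncontrolled. This is consistent with the paper's remark that the asymmetric version needs two further test functions for the adjoint.
\item The adjoint route reuses the theorem's proof verbatim, and it is the one that carries over to ATTO.
\end{itemize}
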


A similar result holds for $A^{\theta,\alpha}_G$, expressed using two test functions for the operator and two for its adjoint.

\begin{rem}{\rm
From the relation \eqref{eq:12ajun23} one sees that, although $\ker T_\G$ consists of vector functions, it
must be isomorphic to a space of scalar functions, and thus exhibits a {\em scalar-type behaviour}.
This was studied in \cite{CP20}. }
\end{rem}

Finally, the equivalence after extension of Theorem~\ref{thm:1} allows for
a better understanding of the different behaviour of TTO when 
compared with Toeplitz operators highlighted in several works: TTO behave,
in a certain way, as block Toeplitz operators, whose properties are, in general, 
very different from those of Toeplitz operators with scalar symbols.

The techniques used here can be used to give an alternative proof of a theorem of O'Loughlin
regarding near invariance of kernels of TTO.

\begin{thm} \cite[Thm~4.4]{ryan20}, \cite[Prop.~5.7]{CKP24}.
If $\ker A^\theta_g$ contains a function that does not vanish at 0, then it is nearly
$S^*$-invariant.
\end{thm}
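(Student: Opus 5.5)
The plan is to work directly with the Riemann--Hilbert description \eqref{eq:27} of the kernel, in the spirit of the proof of the previous theorem. Let $f_0 \in \ker A^\theta_g$ with $f_0(0)\neq 0$. Let $f \in \ker A^\theta_g$ with $f(0)=0$. We must show $f/z \in \ker A^\theta_g$. Since $K_\theta$ is $S^*$-invariant, $f/z = S^*f \in K_\theta$, so only the kernel condition needs checking.

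\emph{Step 1 (locate the defect).} By \eqref{eq:27} we can write $gf = \phi_- + \theta\phi_+$ with $\phi_\pm \in H^2_\pm$. Dividing by $z$ gives
\[
g\,\frac{f}{z} = \ol z\phi_- + \theta S^*\phi_+ + \phi_+(0)\,\ol z\theta ,
\]
where $\ol z\phi_- \in H^2_-$ and $\theta S^*\phi_+ \in \theta H^2_+$. Since $\ol z\theta = S^*\theta + \theta(0)\ol z$ and $S^*\theta \in K_\theta$, this yields
\[
A^\theta_g(f/z) = \phi_+(0)\, S^*\theta .
\]
Hence it suffices to prove $\phi_+(0)=0$. (If $S^*\theta=0$ then $\theta$ is constant and there is nothing to prove.)

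\emph{Step 2 (use $f_0$ to kill the constant).} Pair with the conjugate vector $C f_0 := \theta\ol z\ol{f_0}$, which lies in $K_\theta$. A direct computation, using $\langle f_0,\theta\rangle = 0$, gives
\[
\langle S^*\theta, \theta\ol z\ol{f_0}\rangle = \int (1-\theta(0)\ol\theta) f_0 = f_0(0) \neq 0 .
\]
On the other hand, since $Cf_0 \in K_\theta$,
\[
\langle A^\theta_g(f/z), Cf_0\rangle = \langle g f/z, \theta\ol z\ol{f_0}\rangle = \int g f f_0\ol\theta\, dm .
\]
This expression is symmetric in $f$ and $f_0$. So it equals $\langle g f_0, \theta\ol z\ol{f/z}\cdot z\rangle$ rewritten as $\langle gf_0, \theta \ol{f}\rangle$ (with $\theta\ol f = \theta\ol z\,\ol{(f/z)} \in K_\theta$). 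That in turn equals $\langle A^\theta_g f_0, \theta\ol f\rangle = 0$, because $f_0 \in \ker A^\theta_g$.

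\emph{Step 3 (conclude).} Combining Steps 1 and 2 gives $\phi_+(0)\,f_0(0)=0$, hence $\phi_+(0)=0$, and so $A^\theta_g(f/z)=0$. This is exactly the complex symmetry $A^{\theta*}_g = CA^\theta_{\ol g}C$ in disguise, but the computation above is self-contained.

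The main obstacle is Step 2. Near invariance fails in general for TTO kernels precisely because of the term $\phi_+(0)S^*\theta$, and one needs a way to force this constant to vanish. The key points to check carefully are that the pairing with $Cf_0$ detects $S^*\theta$ with coefficient exactly $f_0(0)$, and that the integral $\int gff_0\ol\theta$ is genuinely symmetric. The latter requires the membership $\theta\ol f \in K_\theta$ for $f\in K_\theta$ with $f(0)=0$, which holds by the conjugation on $K_\theta$.
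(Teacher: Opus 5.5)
Your proof is correct, but it removes the obstruction by a different mechanism from the paper's.

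\emph{Your route.} You stay inside $K_\theta$ and compute the defect exactly: $A^\theta_g(S^*f)=\phi_+(0)\,S^*\theta$. You then force $\phi_+(0)=0$ by pairing against $Cf_0$. This is the complex symmetry of truncated Toeplitz operators: the bilinear form $[u,v]=\int uv\,\ol\theta z\,dm$ on $K_\theta$ satisfies $[A^\theta_g u,v]=[u,A^\theta_g v]$.

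\emph{The paper's route.} The paper works with the vector Riemann--Hilbert system for $\ker T_{\G_1}$. Cross-multiplying the systems for the two kernel functions and using $H^1_+\cap H^1_-=\{0\}$ gives the Wronskian-type identity $\phi_{2+}\psi_{1+}=\psi_{2+}\phi_{1+}$. Evaluating at $0$ yields $\psi_{2+}(0)=0$, which is your $\phi_+(0)$ up to sign.

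\emph{How they relate.} In the paper's notation, $\int g\phi_{1+}\psi_{1+}\ol\theta\,dm$ equals both $-\phi_{2+}(0)\psi_{1+}(0)$ and $-\psi_{2+}(0)\phi_{1+}(0)$. So your symmetry identity is exactly the paper's identity evaluated at $0$.

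\emph{What each buys.} Your route is shorter and needs no block operators. It also shows plainly that the only obstruction is the single vector $S^*\theta$, which is why one kernel function with $f_0(0)\neq 0$ suffices. It does, however, rely on the conjugation of a symmetric model space. The paper's route produces a full functional identity rather than one coefficient. That is what drives the near $\ol\alpha$-invariance generalisation mentioned after the proof: with $gf=\phi_-+\theta\phi_+$ as in your Step 1, the defect of $\ol\alpha f$ is $P_\theta(\theta\,\ol\alpha P_\alpha\phi_+)$. This is in general not one-dimensional, so a single scalar pairing against $Cf_0$ no longer suffices. The paper's argument also fits its EAE/Riemann--Hilbert theme.

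\emph{Two cosmetic slips,} neither affecting the argument. The expression $\theta\ol z\,\ol{f/z}\cdot z$ has a spurious factor $z$; you mean $\theta\ol z\,\ol{(f/z)}=\theta\ol f$. Your closing symmetry relation should read $CA^\theta_gC=(A^\theta_g)^*=A^\theta_{\ol g}$.
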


\beginpf
We have $\phi_{1+} \in \ker A^\theta_g$ with
\begin{align}
\ol\theta \phi_{1+} &= \phi_{1-} ,\label{eq:may3} \\
g\phi_{1+} + \theta \phi_{2+} & = \phi_{2-}, \label{eq:may1}
\end{align}
where $\phi_{1\pm},\phi_{2\pm} \in H^2_\pm$   and $\phi_{1+}(0) \ne 0$.
Suppose that another function $\psi_{1+}  \in \ker A^\theta_g$,  so that it satisfies
\begin{align}
\ol\theta \psi_{1+} &= \psi_{1-}, \label{eq:may4} \\
g\psi_{1+} + \theta \psi_{2+} & = \psi_{2-}, \label{eq:may2}
\end{align}
where $\psi_{1\pm},\psi_{2\pm} \in H^2_\pm$,
and suppose that now $\psi_{1+}(0)=0$. 

From \eqref{eq:may1} and \eqref{eq:may2} we have
\[
\theta \phi_{2+}\psi_{1+}- \theta\psi_{2+}\phi_{1+} = \phi_{2-}\psi_{1+}-\psi_{2-}\phi_{1+},
\]
and on multiplying by $\ol\theta$ this rearranges to give
\[
-\phi_{2-}\psi_{1+}\ol\theta + \phi_{2+}\psi_{1+} = -\psi_{2-}\phi_{1+}\ol\theta + \psi_{2+}\phi_{1+} .
\]
In view of \eqref{eq:may3} and \eqref{eq:may4} we have
\[
 \phi_{2+}\psi_{1+} - \psi_{2+}\phi_{1+}= \phi_{2-}\psi_{1-}- \psi_{2-}\phi_{1-} =0
\]
since the left-hand side is in $H^1_+$ and the right-hand side in $H^1_-$.
Since $\psi_{1+}(0)=0$ and $\phi_{1+}(0) \ne 0$ we have that $\psi_{2+}(0)=0$
and so $\ol z \psi_{2+} \in 
 H^2_+$. From \eqref{eq:may2} we now have
\[
g(\ol z\psi_{1+}) + \theta (\ol z\psi_{2+})  = \ol z\psi_{2-} \in H^2_-,
\]
showing that $\ol z\psi_{1+} \in \ker A^\theta_g$.
\endpf

Generalizations of this result are possible, asserting the near $\ol\alpha$-invariance
of $\ker A^\theta_g$,
for an inner function $\alpha$, provided that there is a function $\phi_{1+} \in \ker A^\theta_g$ that has no common inner factor with $\alpha$. The proof is very similar.

\begin{rem}\label{rem:3.6}{\rm
O'Loughlin~\cite{ryan22} has proved a vectorial generalisation of
Theorem~\ref{thm:1}. Here we work with $n \times n$ matricial inner functions,
which are defined to be elements of $(H^\infty)^{n \times n}$ such that
$\Theta(z)$ is unitary for almost all $z \in \TT$. From these we may construct the model space
$K_\Theta = (H^2_+)^{n } \cap \Theta (H^2_-)^{n}$.

For $G \in (L^\infty)^{n \times n}$ we   define the TTO $A^\Theta_G$ acting on
$K_\Theta$ by
\[
A^\Theta_G f = P_\Theta (Gf),
\]
where $P_\Theta$ denotes the orthogonal projection from $ (H^2_+)^{n }$ onto $K_\Theta$.
It can then be shown that $A^\Theta_G$ is equivalent after extension to
the block Toeplitz operator $T_\G$ with symbol 
\[
\G = \begin{pmatrix}
\Theta^* & 0 \\ G & \Theta
\end{pmatrix}.
\]
Further generalisations, including to the non-Hilbert case, are given in \cite{ryan22}.
}
\end{rem}

\begin{rem}{\rm 
It is possible to extend the definition of  truncated Toeplitz operators so that
they act on a wider class of spaces than model spaces.

Hartmann and Ross~\cite{HR13} showed that for $G \in L^\infty$ and $M=h_+K_\theta$
a nearly invariant subspace, with $K_\theta$ a model space and $h_+$ an isometric multiplier
from $K_\theta$ onto $M$, 
the
truncated Toeplitz operator $A^M_G$ defined by
\[
A^M_G f = P_M (Gf),
\]
with $P_M$ the orthogonal projection from $L^2$ onto $M$, is unitarily equivalent to
the ``standard'' truncated Toeplitz operator $A^\theta_{|h_+|^2 G}$ acting on $K_\theta$.
It follows from Theorem~\ref{thm:1} that we have
\[
A^M_G \simast T_\G, \quad \hbox{where now} \quad
\G= \begin{pmatrix} \ol\theta & 0 \\ |h_+|^2G & \theta \end{pmatrix}.
\]
}
\end{rem}

\section{Asymmetric dual truncated Toeplitz operators}
\label{sec:4jul30}

For $\theta$ an inner function let $K_\theta^\perp= H^2_- \oplus \theta H^2_+=L^2 \ominus K_\theta$ and let
\[
Q_\theta = I_{L^2}-P_\theta = P^- + \theta P^+ \ol\theta I_{L^2}
\]
be the orthogonal projection from $L^2$ onto $K^\perp_\theta$. We define the 
{\em asymmetric dual truncated Toeplitz operator (ADTTO)\/} with symbol $G$
by
\[
D_G^{\theta,\alpha}: K^\perp_\theta \to K^\perp_\alpha, \qquad D^{\theta,\alpha}_G= Q_\alpha G {Q_\theta}_{|K_\theta^\perp}
\]
and, if $\theta=\alpha$,
\[
D_G^\theta: K_\theta^\perp \to K_\theta^\perp, \qquad D_G^\theta=Q_\theta G{ Q_\theta}_{|K_\theta^\perp}.
\]
These operators were introduced in \cite{ding} and further studied in \cite{CKLP}.
It was clear from the beginning that DTTO behave very differently from TTO. We have, for instance,
\begin{itemize}
\item $D_G^{\theta,\alpha}$ is bounded if and only if $G \in L^\infty$.
\item It is compact if and only if $G=0$.
\item If it is Fredholm, then $G \in \G L^\infty$.
\end{itemize}
None of these properties holds for TTO.
However, we shall show, using the concept of EAE, that
the two types of operator are related and, 
under  certain conditions, one may be studied from the other. 

DTTO are a very natural class of operators to study, and they possess unexpected properties and lead to new questions. For example, the study of the invariant subspaces of the dual truncated shift shows that they have
a much richer structure than those for the truncated shift on model spaces~\cite{CR21};
this raised the question of identifying the invariant subspaces of $S \oplus S^*$, which was solved by Timotin~\cite{timotin}.

The motivation to study a DTTO using its EAE with another operator came also from the study of 
its kernel in terms of a Riemann--Hilbert problem.
In order to do this, we start by noting that
\[
f_{\theta^\perp} \in K_\theta^\perp \iff f_{\theta^\perp}= f_- + \theta \til f_+, \quad \hbox{with}
\quad f_- \in H^2_- \hbox{ and } \til f_+ \in H^2_+,
\]
so
\begin{align}
D^{\theta,\alpha}_G f_{\theta^\perp}=0 &\iff Q_\alpha G  f_{\theta^\perp} = 0
\iff G(f_- + \theta \til f_+) \in K_\alpha \nonumber \\
& \iff 
\left \{ 
\begin{aligned}
G(f_- + \theta \til f_+)&= \psi_+ \\
\ol\alpha G (f_-+ \theta \til f_+) &= \psi_-
\end{aligned}
\right.
\qquad \hbox{with} \quad \psi_\pm \in H^2_\pm. \label{eq:37}
\end{align}
We write the last equations in matrix form as
\beq\label{eq:38}
\left( \begin{pmatrix} G\theta & -1 \\ G\theta\ol\alpha & 0\end{pmatrix} P^+
+ \begin{pmatrix} G & 0 \\ \ol\alpha G & -1 \end{pmatrix} P^- \right)
\begin{pmatrix} f_1 \\ f_2 \end{pmatrix} = \begin{pmatrix}0 \\ 0 \end{pmatrix}
\eeq
with 
\[ 
f_1 = f_- + \til f_+ \qquad \hbox{and} \qquad f_2= \psi_- + \psi_+.
\]
Thus \eqref{eq:38} is equivalent to
\beq \label{eq:40}
(AP^+ + BP^-)f=0,
\eeq
i.e., $f \in \ker (AP^++BP^-)$, where $AP^++BP^-$
is a paired operator with
\beq\label{eq:25A3aug}
A= \begin{pmatrix} G\theta & -1 \\ G\theta\ol\alpha & 0\end{pmatrix} \qquad \hbox{and}
\qquad B= \begin{pmatrix} G & 0 \\ \ol\alpha G & -1 \end{pmatrix}.
\eeq
From \eqref{eq:37}--\eqref{eq:40} we also get a simple isomorphism between
$\ker D^{\theta,\alpha}_G$ and $\ker(AP^++BP^-)$, namely
\beq\label{eq:25ajun23}
\ker D^{\theta,\alpha}_G = (P^-+\theta P^+) P_1 \ker (AP^++BP^-).
\eeq
The operator $AP^++BP^-$ is thus a natural candidate for an operator that is EAE to $D^{\theta,\alpha}_G$.
Indeed we have:

\begin{thm} 
\label{thm:4.1aug3}
\cite{CKLP}
$ D_G^{\theta,\alpha} \simast AP^++BP^-$.
\end{thm}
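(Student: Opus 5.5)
The plan is to construct the invertible operators in \eqref{eq:8} explicitly, as in the proof of Theorem~\ref{thm:2.1aug3}. The guiding idea, suggested by \eqref{eq:37}--\eqref{eq:25ajun23}, is to change coordinates on the domain and codomain of $AP^++BP^-$ so that it becomes a block triangular operator. One diagonal entry will be (a unitary copy of) $D^{\theta,\alpha}_G$ and the other an invertible operator. A triangular operator with an invertible corner is equivalent to the corresponding diagonal one, and this yields the EAE with extension space $H^2_+\oplus H^2_-$ for $D_G^{\theta,\alpha}$ and $\{0\}$ for the paired operator.

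\emph{Step 1 (domain).} For $f=(f_1,f_2)\in (L^2)^2$ write $f_1=f_-+\til f_+$ and $f_2=\psi_-+\psi_+$. Define
\[
F f=(u,\psi_+,\psi_-),\qquad u=f_-+\theta\til f_+ .
\]
This is a unitary map from $(L^2)^2$ onto $K_\theta^\perp\oplus H^2_+\oplus H^2_-$, since $u$ determines $f_-=P^-u$ and $\til f_+=\ol\theta P^+u$. A direct computation from \eqref{eq:25A3aug} gives
\[
(AP^++BP^-)f=\bigl(Gu-\psi_+,\ \ol\alpha(Gu-\alpha\psi_-)\bigr).
\]

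\emph{Step 2 (codomain).} Compose on the left with the invertible operator $(x,y)\mapsto(x,\alpha y)$ on $(L^2)^2$; it is invertible because $\alpha$ is unimodular on $\TT$. The operator becomes $(u,\psi_+,\psi_-)\mapsto (Gu-\psi_+,\,Gu-\alpha\psi_-)$. Now split the codomain as
\[
L^2\oplus L^2=(H^2_+\oplus \alpha H^2_-)\oplus(H^2_-\oplus\alpha H^2_+),
\]
taking the orthogonal decompositions $L^2=H^2_+\oplus H^2_-$ in the first coordinate and $L^2=\alpha H^2_-\oplus\alpha H^2_+$ in the second. In these coordinates the operator has block form
\[
\begin{pmatrix} N & * \\ 0 & \wt D\end{pmatrix},\qquad N(\psi_+,\psi_-)=(-\psi_+,-\alpha\psi_-),\qquad \wt D u=(P^-Gu,\ \alpha P^+\ol\alpha Gu).
\]
The zero entry arises because $\psi_+$ and $\alpha\psi_-$ have no components in $H^2_-$ and $\alpha H^2_+$ respectively. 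Here $N: H^2_+\oplus H^2_-\to H^2_+\oplus\alpha H^2_-$ is clearly invertible. Since $Q_\alpha=P^-+\alpha P^+\ol\alpha$ is the sum of two mutually orthogonal projections, $\wt D=U D^{\theta,\alpha}_G$, where $U:K_\alpha^\perp\to H^2_-\oplus\alpha H^2_+$ is the unitary $w\mapsto(P^-w,\alpha P^+\ol\alpha w)$.

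\emph{Step 3.} Using invertibility of $N$, eliminate the off-diagonal block $*$ by right multiplication with an invertible upper triangular operator $\begin{pmatrix} I & -N^{-1}* \\ 0 & I\end{pmatrix}$, then permute the blocks. This gives
\[
\begin{pmatrix} D^{\theta,\alpha}_G & 0 \\ 0 & I_{H^2_+\oplus H^2_-}\end{pmatrix}= E\,(AP^++BP^-)\,F'
\]
with $E$ and $F'$ invertible, and the theorem follows.

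The main obstacle is Step 2: choosing the codomain splitting so that the lower-left block really vanishes and the remaining corner is exactly $Q_\alpha G|_{K_\theta^\perp}$ up to a unitary. The essential point is the factor $\ol\alpha$ in the second row of $A$ and $B$, which is undone by multiplying that row by $\alpha$. Once that is set up, everything else is routine bookkeeping of projections.
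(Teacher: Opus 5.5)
Your proof is correct. The formula $(AP^++BP^-)f=(Gu-\psi_+,\ \ol\alpha Gu-\psi_-)$ checks out, as does the vanishing of the lower-left block after the codomain splitting. So does the identity $\wt D=UD^{\theta,\alpha}_G$, which uses $P^-Q_\alpha=P^-$ and $\alpha P^+\ol\alpha Q_\alpha=\alpha P^+\ol\alpha$. The final elimination with the invertible corner $N$ then gives the EAE with extension spaces $H^2_+\oplus H^2_-\cong L^2$ and $\{0\}$.

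The paper does not prove this theorem itself; it only cites \cite{CKLP}. Its own model for such results (Theorem~\ref{thm:2.1aug3}, and the two-step argument through an intermediate operator sketched after Theorem~\ref{thm:1}) is to write down invertible factors $E,F$ and verify the matrix identity by direct computation. Your route instead derives the factors: a unitary change of variables $f\mapsto(u,\psi_+,\psi_-)$, a unitary renormalisation of the second row by $\alpha$, and a Schur-complement reduction.

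What this gains is transparency. The triangular form shows at once that $f\mapsto(P^-+\theta P^+)P_1f$ is an isomorphism from $\ker(AP^++BP^-)$ onto $\ker D^{\theta,\alpha}_G$, which is exactly \eqref{eq:25ajun23}. It also explains where the extension space $L^2$ comes from. It needs no invertibility assumption on $G$, matching the generality of the theorem. The cost is that you do not get a single closed-form expression for $E$ and $F$, though one can be read off by composing your steps.
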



As an immediate consequence of Theorem~\ref{thm:4.1aug3} and the properties of paired operators we have the following, with the notation of Theorem~\ref{thm:4.1aug3}.

\begin{cor}\label{cor:4.1A3aug}
The operator $D^{\theta,\alpha}_G$ is semi-Fredholm (resp., Fredholm, invertible)
if and only if $AP^+ + BP^-$ is semi-Fredholm (resp., Fredholm, invertible)
on $(L^2)^2$. If $D_G^{\theta,\alpha}$ is semi-Fredholm then $G \in \G L^\infty$.
\end{cor}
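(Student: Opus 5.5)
The first assertion should follow directly from Theorem~\ref{thm:4.1aug3}, using the invariance properties of equivalence after extension listed in the Introduction. If $D^{\theta,\alpha}_G \simast AP^++BP^-$, then
\[
\begin{pmatrix} D^{\theta,\alpha}_G & 0 \\ 0 & I_{X_0}\end{pmatrix} = E \begin{pmatrix} AP^++BP^- & 0 \\ 0 & I_{Y_0}\end{pmatrix} F
\]
with $E,F$ invertible. The extended operators therefore have kernels and cokernels isomorphic to those of $D^{\theta,\alpha}_G$ and $AP^++BP^-$ respectively, and they have closed range exactly when the original operators do. Since equivalence preserves closedness of range, kernels up to isomorphism, and cokernels up to isomorphism, we get the following. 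Closed range of one operator is equivalent to closed range of the other. Finite-dimensionality of the kernel, or of the cokernel, transfers between them. Invertibility (injective with closed full range) transfers as well. This yields the semi-Fredholm, Fredholm and invertibility equivalences at once.

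For the second assertion I would use the standard necessary condition for semi-Fredholmness of paired operators with $L^\infty$ matrix coefficients on $(L^2)^2$. If $AP^++BP^-$ is semi-Fredholm, then both $A$ and $B$ must be invertible in $(L^\infty)^{2\times 2}$. For paired operators this is a known result (cf.\ \cite{MP}, \cite{CGP24}): localizing, a noninvertible coefficient lets one build singular sequences supported near points where $\det$ is small, on either the $H^2_+$ or the $H^2_-$ side, which destroys semi-Fredholmness.

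Granting this, the rest is a computation:
\[
\det A = \det\begin{pmatrix} G\theta & -1 \\ G\theta\ol\alpha & 0\end{pmatrix} = G\theta\ol\alpha, \qquad \det B = \det \begin{pmatrix} G & 0 \\ \ol\alpha G & -1\end{pmatrix} = -G.
\]
Invertibility of $B$ in $(L^\infty)^{2\times2}$ means $\det B\in\G L^\infty$, hence $G\in \G L^\infty$. Since $|\theta|=|\alpha|=1$ a.e., the condition from $A$ is consistent with this and gives nothing new.

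The main obstacle is justifying the second assertion in the semi-Fredholm case, where only one of the defect numbers is finite. There the necessity of invertibility of both coefficients is the delicate point. I would first try to reduce to the Fredholm-type statement via adjoints, since $(AP^++BP^-)^*=P^+A^*+P^-B^*$ is again essentially paired. Failing that, I would construct explicit approximate kernel sequences $f_n=\chi_{E_n}\,z^{\pm N_n}v$ near a point where $|G|$ is small, using that multiplication by $z^{\pm N}$ pushes functions into $H^2_\pm$ modulo small errors.
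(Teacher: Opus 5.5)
Your proposal is correct and follows essentially the same route as the paper. The equivalences come directly from the EAE of Theorem~\ref{thm:4.1aug3}. The conclusion $G \in \G L^\infty$ comes from the known necessary condition in Mikhlin--Pr\"ossdorf (Chap.~5) that semi-Fredholmness of $AP^++BP^-$ forces $\essinf_{t\in\TT}|\det A(t)|>0$ and $\essinf_{t\in\TT}|\det B(t)|>0$, together with your computation $|\det A|=|\det B|=|G|$. The paper simply cites that necessary condition, so your fallback sketches for proving it from scratch are not needed.
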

\beginpf
The result follows from Theorem~\ref{thm:4.1aug3} and the fact that a 
necessary condition for the operator $AP^+ + BP^-$ to be semi-Fredholm
is that 
\[
\essinf_{t \in \TT} |\det A(t)|>0 \qquad \hbox{and} \qquad \essinf_{t \in \TT} |B(t)|>0,
\]
(\cite[Chap. 5]{MP})
and, in our case, we have $|\det A|=|\det B|=|G|$.
\endpf

In particular, for $\alpha=\theta$, it follows that, denoting by $\sigma(A)$ and $\sigma_e(A)$
respectively  the 
spectrum and essential spectrum of an operator $A$, we have
\[
G(\TT) \subseteq \sigma_e(D^\theta_G) \subseteq D^\theta_G.
\]
In view of Corollary~\ref{cor:4.1A3aug}, we see that the case when $G \in \G L^\infty$
is particularly important. In that case, we see that $A,B$ defined by \eqref{eq:25A3aug},
are invertible in $(L^\infty)^{2 \times 2}$ and
\[
D_G^{\theta,\alpha} \simast AP^+ + BP^- = B(P^+B^{-1}AP^++P^-)(I+P^{-}B^{-1}AP^+),
\]
where $B$ and $I+P^-B^{-1}AP^+$ are invertible in $(L^2)^2$, with
\[
(I+P^-B^{-1}AP^+)^{-1}=I-P^-B^{-1}AP^+.
\]
 So, writing
 \[
 C=B^{-1}A = \begin{pmatrix} \theta & -G^{-1} \\ 0 & -\ol\alpha \end{pmatrix},
 \]
 we have that 
 \begin{align*}
 D_G^{\theta,\alpha} & \simast P^+CP^++P^- = (CP^++P^-)(I-P^-CP^+) \sim  CP^+ +P^- \\
 & \simast T_C \sim T_\G \simast A^{\alpha,\theta}_{G^{-1}}
 \end{align*}
 with
 \[
 \G= \begin{pmatrix} 0 & 1 \\  1 & 0 \end{pmatrix}
 C \begin{pmatrix} 0 & 1 \\ -1 & 0 \end{pmatrix}
 = \begin{pmatrix} \ol\alpha & 0 \\ G^{-1} & \theta \end{pmatrix},
 \]
 where we took into account the invertibility of $I-P^-CP^+$, the EAE between a paired operator and a Toeplitz operator (Theorem~\ref{thm:2.1aug3}), and the EAE between a TTO and a $2 \times 2$
 block Toeplitz operator (Theorem~\ref{thm:1}).
 
 We have the following result, which is rather surprising in view of the different properties
 of the two types of operators involved.
 
 \begin{thm}\cite{CKLP}\label{thm:4.1Baug3}
 If $G \in \G L^\infty$, then $D^{\theta,\alpha}_G \simast A^{\alpha,\theta}_{G^{-1}}$.
 \end{thm}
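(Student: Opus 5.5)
The plan is to chain the equivalences that appear just before the statement, using the facts that $\simast$ is transitive and that equivalence ($\sim$) implies EAE (take trivial extension spaces). The chain runs
\[
D^{\theta,\alpha}_G \;\simast\; AP^++BP^- \;\sim\; P^+CP^++P^- \;\sim\; CP^++P^- \;\simast\; T_C \;\sim\; T_\G \;\simast\; A^{\alpha,\theta}_{G^{-1}}.
\]
Transitivity of $\simast$ is routine: if $\operatorname{diag}(T,I_{X_0})=E_1\operatorname{diag}(S,I_{Y_0})F_1$ and $\operatorname{diag}(S,I_{Y_1})=E_2\operatorname{diag}(R,I_{Z_1})F_2$, then extending the first identity by $I_{Y_1}$ and the second by $I_{Y_0}$ gives a common extension through $\operatorname{diag}(S,I_{Y_0\oplus Y_1})$, after permuting summands.

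The first link is Theorem~\ref{thm:4.1aug3}. For the next links, $G\in\G L^\infty$ gives $A,B$ invertible in $(L^\infty)^{2\times 2}$, since $\det A=G\theta\ol\alpha$ and $\det B=-G$. This yields the factorisation
\[
AP^++BP^-=B(P^+CP^++P^-+P^-CP^+)=B(P^+CP^++P^-)(I+P^-CP^+),
\]
with $C=B^{-1}A$. To check it, use $P^-P^-=P^-$ and $P^+P^-=0$, so that $(P^+CP^++P^-)P^-CP^+=P^-CP^+$. The factor $I+P^-CP^+$ has inverse $I-P^-CP^+$ because $(P^-CP^+)^2=0$. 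Similarly
\[
P^+CP^++P^-=(CP^++P^-)(I-P^-CP^+),
\]
so this operator is equivalent to $CP^++P^-$. A direct computation gives
\[
C=\begin{pmatrix}\theta&-G^{-1}\\0&-\ol\alpha\end{pmatrix}.
\]

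Next, Theorem~\ref{thm:2.1aug3}, applied verbatim with matrix symbols, gives $CP^++P^-\simast T_C$. Its proof is purely algebraic in $P^\pm$ and the multiplication operator, so it transfers to $(L^2)^2$ with $H^2_+$ replaced by $(H^2_+)^2$. Then $T_C\sim T_\G$ with
\[
\G=JCK,\qquad J=\begin{pmatrix}0&1\\1&0\end{pmatrix},\quad K=\begin{pmatrix}0&1\\-1&0\end{pmatrix}.
\]
Here $J$ and $K$ are constant invertible matrices, so $T_{JCK}=T_JT_CT_K$ with $T_J,T_K$ invertible. Computing gives $\G=\begin{pmatrix}\ol\alpha&0\\G^{-1}&\theta\end{pmatrix}$. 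Finally, Theorem~\ref{thm:1}, with the roles of $(\theta,\alpha)$ swapped and symbol $G^{-1}\in L^\infty$, gives $A^{\alpha,\theta}_{G^{-1}}\simast T_\G$. Symmetry of $\simast$ (just invert $E$ and $F$) closes the chain.

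The main obstacle is bookkeeping rather than an idea, namely the justification of the intermediate equivalences. One must check that $P^+CP^++P^-$ and $CP^++P^-$ both act on $(L^2)^2$, so that equivalence applies on the same spaces. One must also confirm the entries of $C$ and of $\G$ by explicit multiplication, and that the matrix version of Theorem~\ref{thm:2.1aug3} holds. For the latter I would rewrite the $3\times 3$ factorisation in that proof blockwise, noting that it uses only $P^+P^-=0$, $(P^\pm)^2=P^\pm$ and $P^++P^-=I$.
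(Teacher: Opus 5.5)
Your proposal is correct and follows the paper's own argument step for step. It uses the same chain $D^{\theta,\alpha}_G \simast AP^++BP^- \sim P^+CP^++P^- \sim CP^++P^- \simast T_C \sim T_\G \simast A^{\alpha,\theta}_{G^{-1}}$, the same factorisation through $B$ and $I+P^-CP^+$, and the same constant matrices to pass from $C$ to $\G$. The extra checks you supply, namely transitivity of $\simast$, the matrix-valued version of Theorem~\ref{thm:2.1aug3}, and $T_{JCK}=T_JT_CT_K$, are details the paper leaves implicit.
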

 
 \begin{cor} \cite{CKLP}
  If $G \in \G L^\infty$, then $D^{\theta}_G \simast A^{\theta}_{G^{-1}}$.
  \end{cor}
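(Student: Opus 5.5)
This is the special case $\alpha=\theta$ of Theorem~\ref{thm:4.1Baug3}. I would specialise each link of the chain displayed just before that theorem and check that none of them uses $\alpha\neq\theta$.

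First, set $\alpha=\theta$ in \eqref{eq:25A3aug}. This gives $A=\begin{pmatrix} G\theta & -1\\ G & 0\end{pmatrix}$ and $B=\begin{pmatrix} G & 0\\ \ol\theta G & -1\end{pmatrix}$. By Theorem~\ref{thm:4.1aug3}, $D^\theta_G \simast AP^++BP^-$ on $(L^2)^2$. Since $G\in\G L^\infty$, we have $\det B=-G$ invertible in $L^\infty$. We can then factor
\[
AP^++BP^- = B\,(CP^++P^-)\,(I+P^-CP^+), \qquad C=B^{-1}A=\begin{pmatrix}\theta & -G^{-1}\\ 0 & -\ol\theta\end{pmatrix}.
\]
The factor $I+P^-CP^+$ has inverse $I-P^-CP^+$, because $P^+P^-=0$ makes the square of $P^-CP^+$ vanish. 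Hence $AP^++BP^-\sim CP^++P^-$.

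Second, the matrix version of Theorem~\ref{thm:2.1aug3} (same block factorisation, with $H^2_\pm$ replaced by $(H^2_\pm)^2$) gives $CP^++P^-\simast T_C$. Next, multiply $C$ on the left by the constant permutation matrix $J_1$ and on the right by the constant invertible matrix $J_2$:
\[
J_1=\begin{pmatrix}0&1\\1&0\end{pmatrix},\qquad J_2=\begin{pmatrix}0&1\\-1&0\end{pmatrix}.
\]
Constant matrices commute with $P^+$, so $T_{J_1CJ_2}=J_1T_CJ_2$, and therefore $T_C\sim T_{\G}$ with
\[
\G=\begin{pmatrix}\ol\theta & 0\\ G^{-1} & \theta\end{pmatrix}.
\]
Theorem~\ref{thm:1} (the case of $\G_1$, with symbol $G^{-1}\in L^\infty$) then gives $T_\G\simast A^\theta_{G^{-1}}$.

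Finally, combine the links. Equivalence implies EAE (take trivial extension spaces), and EAE is transitive: one composes the invertible operators and enlarges the extension spaces by direct sums with identities. Chaining everything gives $D^\theta_G\simast A^\theta_{G^{-1}}$.

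The only step needing genuine care is the bookkeeping of matrix entries. One has to verify $B^{-1}A$ explicitly and confirm that the $(1,1)$ entry of $\G$ is $\ol\theta$, not $\ol\alpha$ with some other conjugation. That identification is what lets the symmetric TTO form of Theorem~\ref{thm:1} apply directly.
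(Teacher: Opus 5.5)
Your route is the same as the paper's: the corollary is the case $\alpha=\theta$ of Theorem~\ref{thm:4.1Baug3}, via the chain $D^\theta_G\simast AP^++BP^-\sim CP^++P^-\simast T_C\sim T_\G\simast A^\theta_{G^{-1}}$. Your computations of $A$, $B$, $C=B^{-1}A$ and $J_1CJ_2$ are correct.

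There is, however, one false step. The displayed identity
\[
AP^++BP^-=B\,(CP^++P^-)\,(I+P^-CP^+)
\]
does not hold. Using $P^+P^-=0$ one gets $(CP^++P^-)(I+P^-CP^+)=CP^++P^-+P^-CP^+$. So your right-hand side equals $AP^++BP^-+BP^-CP^+$.

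This extra term is nonzero. The second component of $P^-CP^+(0,1)^T$ is $-P^-\ol\theta=-(\ol\theta-\ol{\theta(0)})$, which is nonzero for nonconstant $\theta$, and $B$ is injective.

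You have merged the paper's two factorizations, which are
\[
AP^++BP^-=B(P^+CP^++P^-)(I+P^-CP^+)
\]
and $P^+CP^++P^-=(CP^++P^-)(I-P^-CP^+)$. The middle factor in the first one must be $P^+CP^++P^-$, not $CP^++P^-$.

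The repair is immediate and shortens the argument. Since $G^{-1}\in L^\infty$, you have $B^{-1}\in(L^\infty)^{2\times2}$, and so directly
\[
AP^++BP^-=B(B^{-1}AP^++P^-)=B(CP^++P^-).
\]
Hence $AP^++BP^-\sim CP^++P^-$, with no need for the factors $I\pm P^-CP^+$ at all.

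With that line replaced, your remaining steps are fine and give the corollary: the vector version of Theorem~\ref{thm:2.1aug3}, the constant invertible matrices $J_1,J_2$ commuting with $P^+$, Theorem~\ref{thm:1} for $\G_1$ with symbol $G^{-1}$, and transitivity of $\simast$.
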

  
  As a consequence, the invertibility and Fredholmness properties of DTTO and, therefore, their spectrum, can   be studied from those of TTO when the symbol is invertible in $L^\infty$.
For example, from Proposition~\ref{prop:A5aug3} and Theorem~\ref{thm:A6aug3} for TTO we have the following:
  
  \begin{thm}\cite{CKLP}
  If $G \in H^\infty$ and $\ol\theta G \in \ol{H^\infty}$, then
  $\sigma(D^\theta_G)=\clos G(\DD)$.
  \end{thm}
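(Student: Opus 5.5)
The plan is to characterise the complement of the spectrum pointwise. For $\lambda\in\CC$ we have $D^\theta_G-\lambda I_{K_\theta^\perp}=D^\theta_{G-\lambda}$, since $Q_\theta$ acts as the identity on $K_\theta^\perp$. So $\lambda\in\sigma(D^\theta_G)$ if and only if $D^\theta_{G-\lambda}$ is not invertible. We then split into two cases, according to whether $G-\lambda\in\G L^\infty$.

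\emph{Case 1: $G-\lambda\notin\G L^\infty$.} By Corollary~\ref{cor:4.1A3aug}, $D^\theta_{G-\lambda}$ is not even semi-Fredholm, hence $\lambda\in\sigma(D^\theta_G)$. We must check that such $\lambda$ lie in $\clos G(\DD)$. Here $G-\lambda\notin\G L^\infty$ means that $\lambda$ belongs to the essential range of the boundary function of $G$ on $\TT$. Since $G\in H^\infty$, its boundary values are a.e.\ radial limits of values $G(r\zeta)$, $r<1$. Hence the essential range on $\TT$ is contained in $\clos G(\DD)$, and this case is consistent with the claim.

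\emph{Case 2: $G-\lambda\in\G L^\infty$.} By the Corollary following Theorem~\ref{thm:4.1Baug3}, $D^\theta_{G-\lambda}\simast A^\theta_{g}$ with $g=(G-\lambda)^{-1}\in L^\infty$. Since EAE preserves invertibility, it suffices to decide when $A^\theta_g$ is invertible. For this we apply Proposition~\ref{prop:A5aug3} to this $g$ and verify its hypotheses \eqref{eq:A9jul30}:
\begin{itemize}
\item $g^{-1}=G-\lambda\in H^\infty$, since $G\in H^\infty$;
\item $\ol\theta g^{-1}=\ol\theta G-\lambda\ol\theta\in\ol{H^\infty}$, since $\ol\theta G\in\ol{H^\infty}$ by assumption and $\ol\theta\in\ol{H^\infty}$.
\end{itemize}
Proposition~\ref{prop:A5aug3} then gives that $A^\theta_g$ is invertible if and only if $g\in\G H^\infty$. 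That is, $G-\lambda$ must be invertible in $H^\infty$, which holds if and only if $\inf_{z\in\DD}|G(z)-\lambda|>0$, i.e.\ if and only if $\lambda\notin\clos G(\DD)$.

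Combining the two cases, $\sigma(D^\theta_G)$ consists of the points of case 1 (all in $\clos G(\DD)$) together with those points of case 2 that lie in $\clos G(\DD)$. Conversely, every $\lambda\in\clos G(\DD)$ lies in the spectrum, whichever case it falls under. Hence $\sigma(D^\theta_G)=\clos G(\DD)$.

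The main point needing care is the hypothesis check in case 2: Proposition~\ref{prop:A5aug3} is stated for $g\in L^\infty$ with conditions placed on $g^{-1}$, so it is applied to $g=(G-\lambda)^{-1}$, which need not be analytic. A secondary point is the inclusion of the essential range of $G$ on $\TT$ in $\clos G(\DD)$, which is needed to make case 1 consistent with the claim.
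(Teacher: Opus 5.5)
Your proposal is correct and follows the route the paper indicates. You reduce the invertibility of $D^\theta_{G-\lambda}$, via the EAE $D^\theta_{G-\lambda}\simast A^\theta_{(G-\lambda)^{-1}}$, to that of a TTO decided by Proposition~\ref{prop:A5aug3}, and Corollary~\ref{cor:4.1A3aug} handles the points where $G-\lambda\notin\G L^\infty$ (which lie in $\clos G(\DD)$, as you correctly argue).
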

  
  \begin{thm}\cite{CKLP}
  Let $G \in \G L^\infty$. Suppose that $G^{-1} \in H^\infty$, and let $G^{-1}=\beta O_G$ be its inner--outer factorization, with $\beta$ inner and $O_G$ outer. Write $\gamma= \gcd(\theta,\beta)$. Then\\
  (i) $D^\theta_G$ is Fredholm if and only if $\gamma$ is a finite Blaschke product.;
  \\
  (ii) $D^\theta_G$ is invertible if and only if $(\theta,\beta) \in \CP^+$.
  \end{thm}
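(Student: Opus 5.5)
The plan is to transfer the question from the dual truncated Toeplitz operator to a truncated Toeplitz operator with analytic symbol, and then read off the answer from Theorem~\ref{thm:A6aug3}. Since $G \in \G L^\infty$, the Corollary to Theorem~\ref{thm:4.1Baug3} gives $D^\theta_G \simast A^\theta_{G^{-1}}$. Equivalence after extension preserves Fredholmness and invertibility, as listed in the Introduction. Hence $D^\theta_G$ is Fredholm (resp. invertible) if and only if $A^\theta_g$ is, where $g := G^{-1} \in H^\infty$. The inner factor of $g$ is $g_i = \beta$, so the $\gamma$ of Theorem~\ref{thm:A6aug3} is exactly $\gcd(\theta,\beta)$.

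Next I would exploit the extra information on the outer factor. Because $G \in L^\infty$, we have $|O_G| = |G^{-1}| \ge 1/\|G\|_\infty$ a.e.\ on $\TT$. Thus $1/O_G$ is an outer function with bounded boundary values, hence lies in $H^\infty$, so $O_G \in \G H^\infty$. Consequently, on $\DD$,
\[
c\,(|h_1|+|h_2|) \le |h_1| + |h_2 O_G| \le C\,(|h_1|+|h_2|)
\]
for any $h_1,h_2 \in H^\infty$. It follows that a pair $(h_1,h_2 O_G)$ lies in $\CP^+$ if and only if $(h_1,h_2)$ does.

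With this, part (ii) is immediate. By Theorem~\ref{thm:A6aug3}(ii), $A^\theta_g$ is invertible if and only if $(\theta,\beta O_G) \in \CP^+$, which is equivalent to $(\theta,\beta) \in \CP^+$.

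For part (i), Theorem~\ref{thm:A6aug3}(i) says that $A^\theta_g$ is Fredholm if and only if $\gamma$ is a finite Blaschke product and $\ol\gamma(\theta,\beta O_G) = (\theta/\gamma, (\beta/\gamma) O_G) \in \CP^+$. By the remark above, the second condition is equivalent to $(\theta/\gamma,\beta/\gamma) \in \CP^+$.

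The main obstacle is to show that this corona condition on the coprime parts is automatically satisfied, so that only the finiteness of $\gamma$ remains. My first attempt would be to use the structure of $\ker D^\theta_G$ through \eqref{eq:25ajun23}. Failing that, I would use the identity $\ker A^\theta_g = \frac{\theta}{\gamma}K_\gamma$ from Theorem~\ref{thm:A6aug3}(iii) together with the adjoint operator, to show that closedness of the range forces the corona condition. If the condition turns out not to follow from the hypotheses alone, the statement in (i) should be read as including $\ol\gamma(\theta,\beta) \in \CP^+$, with the rest of the argument unchanged.
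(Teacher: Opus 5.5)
Your reduction is the same as the paper's: it combines $D^\theta_G\simast A^\theta_{G^{-1}}$ with Theorem~\ref{thm:A6aug3}. Your observation that $O_G\in\G H^\infty$, so the outer factor can be dropped from corona conditions, is correct. This settles (ii) completely. For (i) it yields: $D^\theta_G$ is Fredholm if and only if $\gamma$ is a finite Blaschke product and $(\theta/\gamma,\beta/\gamma)\in\CP^+$.

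The step you single out as the main obstacle cannot be carried out. The corona condition does not follow from the finiteness of $\gamma$, so the ``if'' direction of (i) as printed is false. Here is a counterexample:
\begin{itemize}
\item Take $\theta(z)=\exp\bigl(-\frac{1+z}{1-z}\bigr)$, let $\beta$ be the Blaschke product with zeros $r_n=1-2^{-n}$, and put $G=\ol\beta$. Then $G\in\G L^\infty$, $G^{-1}=\beta\in H^\infty$ and $O_G=1$.
\item A singular inner function and a Blaschke product have no common nonconstant inner divisor, so $\gamma=1$, which is a finite Blaschke product.
\item However, $|\theta(r_n)|+|\beta(r_n)|=e^{-(1+r_n)/(1-r_n)}\to 0$, so $(\theta,\beta)\notin\CP^+$.
\item By Theorem~\ref{thm:A6aug3}(i), $A^\theta_\beta$ is not Fredholm, and hence neither is $D^\theta_{\ol\beta}$.
\end{itemize}
In this example $\ker A^\theta_\beta=\frac{\theta}{\gamma}K_\gamma=\{0\}$. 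The failure lies in the non-closedness of the range, so no argument through \eqref{eq:25ajun23} or through kernel information from Theorem~\ref{thm:A6aug3}(iii) can produce the missing condition.

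You should drop that attempt and state (i) with the extra hypothesis $\ol\gamma(\theta,\beta)\in\CP^+$, exactly as in your fallback. This corrected form is what the EAE argument actually yields, and your proof of it is complete. It is also the only form consistent with the paper's subsequent corollary that, for singular $\theta$, $D^\theta_G$ is Fredholm if and only if it is invertible. With the printed (i), that corollary would force $\gcd(\theta,\beta)=1\Rightarrow(\theta,\beta)\in\CP^+$, which the example above refutes.
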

  
  \begin{cor}
  If $G \in \G H^\infty$, then $D^\theta_G$ is invertible.
  \end{cor}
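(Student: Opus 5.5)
We reduce to the preceding theorem. If $G \in \G H^\infty$, then $G^{-1} \in H^\infty$. Let $G^{-1}=\beta O_G$ be its inner--outer factorization.

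The key observation is that $\beta$ must be a unimodular constant. Indeed, $G\beta O_G = 1$ with $G, O_G \in H^\infty$, so $\beta$ divides the constant function $1$ in $H^\infty$. Hence $\beta$ is an inner function with bounded analytic inverse $G O_G = \ol\beta$. An inner function whose inverse lies in $H^\infty$ is a unimodular constant: $|\beta|\le 1$ and $|\beta^{-1}|\le 1$ on $\DD$ force $|\beta|\equiv 1$, and the maximum principle then makes $\beta$ constant. Absorbing the constant into the outer factor, we may take $\beta=1$.

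Then $\gamma=\gcd(\theta,\beta)=1$. Part (ii) of the preceding theorem asks whether $(\theta,\beta)\in\CP^+$, which here reads $\inf_{z\in\DD}(|\theta(z)|+1)\ge 1>0$. So $D^\theta_G$ is invertible.

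As a cross-check, Theorem~\ref{thm:4.1Baug3} gives $D^\theta_G \simast A^\theta_{G^{-1}}$. Since $G^{-1}\in\G H^\infty$, the truncated operator is invertible with inverse $A^\theta_{G}$: for analytic symbols $A^\theta_{G}A^\theta_{G^{-1}}=A^\theta_{GG^{-1}}=I$, because $P_\theta$ annihilates $\theta H^2_+$ and multiplication by $H^\infty$ preserves $\theta H^2_+$.

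No step is a genuine obstacle. The only point needing care is the constancy of $\beta$, which is settled by the maximum principle as above.
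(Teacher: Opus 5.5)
Your proof is correct and is exactly the intended derivation: since $G^{-1}$ is invertible in $H^\infty$ it is outer, so $\beta$ is a unimodular constant and $(\theta,\beta)\in\CP^+$ trivially, and part (ii) of the preceding theorem then gives invertibility of $D^\theta_G$. Your cross-check is also valid and gives a more elementary route that bypasses the corona-type characterization: combine $D^\theta_G \simast A^\theta_{G^{-1}}$ with $A^\theta_{G}A^\theta_{G^{-1}}=A^\theta_{G^{-1}}A^\theta_{G}=I$, which holds because $A^\theta$ is multiplicative on analytic symbols.
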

  
  \begin{cor}
  If $\theta$ is a singular inner function, then $D^\theta_G$ is Fredholm if and only if it is invertible.
  \end{cor}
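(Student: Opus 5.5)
The plan is to read the corollary in the setting of the theorem just before it: $G \in \G L^\infty$, $G^{-1} \in H^\infty$, and $G^{-1}=\beta O_G$ its inner--outer factorization. Under this reading the corollary should follow from Theorem~\ref{thm:4.1Baug3} combined with Theorem~\ref{thm:A6aug3}. The only new ingredient is an observation about the greatest common divisor when $\theta$ is singular.

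The implication ``invertible $\Rightarrow$ Fredholm'' is trivial, so only the converse needs work. Assume $D^\theta_G$ is Fredholm.
\begin{enumerate}
\item By Theorem~\ref{thm:4.1Baug3} (in its symmetric form, the corollary following it), $D^\theta_G \simast A^\theta_{g}$ with $g=G^{-1}\in H^\infty$. Equivalence after extension preserves both Fredholmness and invertibility, so $A^\theta_g$ is Fredholm, and it suffices to show that $A^\theta_g$ is invertible.
\item Note that the inner factor $g_i$ of $g$ is exactly $\beta$. Apply Theorem~\ref{thm:A6aug3}(i) to the Fredholm operator $A^\theta_g$. It gives that $\gamma=\GCD(\theta,\beta)$ is a finite Blaschke product and that $\ol\gamma(\theta,g)\in\CP^+$.
\item Show that $\gamma$ is a unimodular constant. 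Since $\gamma$ divides $\theta$, write $\theta=\gamma\theta_1$ with $\theta_1$ inner. If $\gamma$ had a zero $a\in\DD$, then $\theta(a)=0$. But a singular inner function has no zeros in $\DD$, so this is impossible. Hence $\gamma$ is a Blaschke product with no zeros, i.e.\ a unimodular constant.
\item Since $\ol\gamma$ is a unimodular constant, $\ol\gamma(\theta,g)\in\CP^+$ is equivalent to $(\theta,g)\in\CP^+$. Theorem~\ref{thm:A6aug3}(ii) then gives that $A^\theta_g$ is invertible, and by the equivalence after extension, $D^\theta_G$ is invertible.
\end{enumerate}

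An alternative route uses the preceding theorem directly: part (i) gives that $\gamma$ is a finite Blaschke product, hence constant as in step 3. Part (ii) would then require checking $(\theta,\beta)\in\CP^+$. I prefer the route through Theorem~\ref{thm:A6aug3}, because there the corona condition appears explicitly in the Fredholm criterion, so no further analysis of $(\theta,\beta)$ is needed.

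The main obstacle I expect is the scope of the hypotheses. If the corollary is meant for arbitrary $G$ rather than $G^{-1}\in H^\infty$, the argument above does not apply as it stands. Corollary~\ref{cor:4.1A3aug} would still give $G\in\G L^\infty$, and hence $D^\theta_G\simast A^\theta_{G^{-1}}$, but Theorem~\ref{thm:A6aug3} needs an analytic symbol. In that case I would first try to reduce to the analytic case, for instance by a factorization argument on $G^{-1}$. I would regard that extension as beyond what the cited results provide, and state the corollary under the standing assumptions of the preceding theorem.
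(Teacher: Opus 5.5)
Your argument is correct and is essentially the paper's: the corollary is presented as an immediate consequence of the preceding theorem. That theorem is itself Theorem~\ref{thm:A6aug3} transported through $D^\theta_G \simast A^\theta_{G^{-1}}$, and the one new observation is exactly your step~3, that a finite Blaschke divisor of a singular inner function is a unimodular constant. Going through Theorem~\ref{thm:A6aug3} directly, under the standing hypotheses $G\in\G L^\infty$, $G^{-1}\in H^\infty$ (which is indeed the intended reading), is a good choice: part~(i) of the preceding theorem as printed omits the corona condition $\ol\gamma(\theta,\beta)\in\CP^+$, and without it ``$\gamma$ constant'' alone would not give $(\theta,\beta)\in\CP^+$.
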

  
  These results establish a connection of DTTO with corona problems, by using the EAE of Theorem~\ref{thm:4.1Baug3} when $G \in \G L^\infty$ and the relations of TTO with block Toeplitz operators and corona problems. The connections of $2\times 2$ block
  Toeplitz operators with corona problems can be extended to paired operators with $2 \times 2$ coefficients $A$ and $B$ as follows.
  
  \begin{thm}\label{thm:4.1Haug3}
  \cite[Thm.~8.1]{CKLP}. Let $A,B \in (L^\infty)^{2 \times 2}$ be such that
  $\det A=\phi f_+$ and $\det B=\phi f_-$, where $\phi \in L^\infty$, $\phi(t) \ne 0$ a.e., and
  $f_+, \ol{f_-} \in H^\infty \setminus \{0\}$. If there exist $H_\pm \in \CP^\pm$ satisfying
  \[
  AH_+ + B H_- =0
  \]
  with $AH_+(t) \ne 0$ a.e.\ on $\TT$, then the operator $AP^++BP^-$
  is injective in $(L^2)^2$.
  \end{thm}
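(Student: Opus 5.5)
We argue by the same Wronskian/determinant device used above for O'Loughlin's theorem. Take $f \in \ker(AP^++BP^-)$ and write $f_\pm = P^\pm f \in (H^2_\pm)^2$, so that $Af_+ = -Bf_-$. The aim is to show $f_+=0$ and $f_-=0$.

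\emph{Step 1: form determinants.} Pair $f$ with the given relation $AH_+ = -BH_-$. Consider the $2\times 2$ matrices $[H_+\ f_+]$ and $[H_-\ f_-]$, whose columns are the indicated vectors. Then
\[
A\,[H_+\ f_+] = -B\,[H_-\ f_-],
\]
and taking determinants gives
\[
\phi f_+ \det[H_+\ f_+] = \phi f_- \det[H_-\ f_-].
\]
Since $\phi\neq 0$ a.e., we may cancel $\phi$:
\[
f_+\, d_+ = f_-\, d_-, \qquad d_\pm := \det[H_\pm\ f_\pm].
\]
Here $d_+ \in H^2_+$, because $H_+$ is bounded analytic and $f_+\in(H^2_+)^2$. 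Similarly $d_- \in H^2_-$, and in fact $d_-\in \overline{z}\,\overline{H^2_+}$ since each entry of $f_-$ lies in $H^2_-$. (Careful: the letters $f_\pm$ are used both for the scalar functions in the hypothesis and for the projections of the kernel element. I will rename the kernel element $g=g_++g_-$ to avoid the clash.) With this notation, the left side $f_+ d_+$ lies in $H^1_+$, while the right side $f_- d_-$ lies in $\overline{z}\,\overline{H^1_+}$. Hence both sides vanish. As $f_+$ and $f_-$ are nonzero a.e., we get $d_+=0$ and $d_-=0$ a.e.

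\emph{Step 2: use the corona property.} From $d_+ = 0$, the vectors $H_+(t)$ and $g_+(t)$ are linearly dependent a.e. The corona condition gives $\widetilde H_+ \in (H^\infty)^2$ with $\widetilde H_+^T H_+=1$. Since $H_+(t)\neq 0$, dependence means $g_+ = \lambda_+ H_+$ with
\[
\lambda_+ = \widetilde H_+^T g_+ \in H^2_+.
\]
In the same way, $g_- = \lambda_- H_-$ with $\lambda_- = \widetilde H_-^T g_- \in H^2_-$, using $\widetilde H_-\in(\overline{H^\infty})^2$.

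\emph{Step 3: conclude.} Substituting into $Ag_+ + Bg_- = 0$ gives
\[
\lambda_+ AH_+ + \lambda_- BH_- = 0 .
\]
Using $BH_- = -AH_+$, this becomes $(\lambda_+-\lambda_-)AH_+=0$. Since $AH_+\neq 0$ a.e., it follows that $\lambda_+=\lambda_-$ a.e. But $\lambda_+\in H^2_+$ and $\lambda_-\in H^2_-$, and these spaces intersect only in $\{0\}$. Therefore $\lambda_\pm=0$, so $g=0$, and the operator is injective.

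The main obstacle is Step 1. One must check the integrability classes, so that the products land in $H^1_\pm$ and the $H^1_+\cap \overline{z}\,\overline{H^1_+}=\{0\}$ argument applies. The hypothesis that $f_+$ and $\overline{f_-}$ lie in $H^\infty\setminus\{0\}$ is exactly what is needed here. Its role is to keep the products in the right half-spaces, and to ensure that $f_\pm$ are nonzero a.e. by the F.~and M.~Riesz/uniqueness theorem, which is what allows cancelling them.
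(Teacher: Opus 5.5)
Your proof is correct and complete. The identity $f_+\det[H_+\ g_+]=f_-\det[H_-\ g_-]$ puts both sides in $H^2_+\cap H^2_-=\{0\}$. The corona left inverses then give $g_\pm=\lambda_\pm H_\pm$ with $\lambda_\pm\in H^2_\pm$, and $(\lambda_+-\lambda_-)AH_+=0$ finishes the argument.

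The paper does not reproduce a proof of this statement; it only cites \cite[Thm.~8.1]{CKLP}. Your argument uses the same determinant device the paper applies in its proof of O'Loughlin's near-invariance theorem, so there is no different route to compare. Two small points. First, fix the clash between the scalar functions $f_\pm$ and the kernel element from the start rather than midway. Second, the hypothesis $AH_+(t)\neq 0$ a.e.\ is automatic: $\det A=\phi f_+\neq 0$ a.e., and $\widetilde H_+^TH_+=1$ forces $H_+(t)\neq 0$ a.e.
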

  Taking Theorem~\ref{thm:4.1aug3} into account, we get the following:
  
  \begin{thm}\cite[Thm.~8.3]{CKLP} \label{thm:4.1Iaug3}
  Let $G \in L^\infty$ with $G(t)\ne 0$ a.e., and
  let $\theta,\alpha$ be inner functions with $\alpha \le \theta$ (i.e., $\alpha$ divides $\theta$).
  If there exist $h_{1+},h_{2+},\ol{h_{2-}} \in H^\infty$ such that
  $\ol\alpha h_{1+}=: h_{1-} \in \ol{H^\infty}$, and
  $(h_{1+},h_{2+}) \in \CP^+$, $(h_{1-},h_{2-}) \in \CP^-$, with
  \beq\label{eq:2.6Haug3}
  G(h_{2-}+\theta h_{2+})=h_{1+},
  \eeq
  then $\ker D_G^{\theta,\alpha}=\{0\}$.
  \end{thm}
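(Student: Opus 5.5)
The plan is to reduce the statement to Theorem~\ref{thm:4.1Haug3}, applied to the paired operator $AP^++BP^-$ with $A,B$ given by \eqref{eq:25A3aug}. By \eqref{eq:25ajun23} (or by Theorem~\ref{thm:4.1aug3}, since EAE operators have isomorphic kernels), $\ker D^{\theta,\alpha}_G$ is isomorphic to $\ker(AP^++BP^-)$. It therefore suffices to show that $AP^++BP^-$ is injective on $(L^2)^2$.

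\emph{Determinant hypothesis.} A direct computation gives $\det A = G\theta\ol\alpha$ and $\det B=-G$. Take $\phi=G$, which is nonzero a.e. by assumption, together with $f_+=\theta\ol\alpha$ and $f_-=-1$. Because $\alpha$ divides $\theta$, we have $f_+\in H^\infty\setminus\{0\}$ (it is inner). Clearly $\ol{f_-}\in H^\infty\setminus\{0\}$. So the determinant hypothesis of Theorem~\ref{thm:4.1Haug3} holds.

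\emph{Corona pairs.} Next we need $H_\pm\in\CP^\pm$ with $AH_++BH_-=0$. Write $H_+=(x_+,y_+)$ and $H_-=(x_-,y_-)$. The two rows of $AH_++BH_-=0$ read
\[
G(x_-+\theta x_+)=y_+, \qquad \ol\alpha G(x_-+\theta x_+)=y_- .
\]
This is exactly the Riemann--Hilbert system \eqref{eq:37}, now with bounded data. So we choose
\[
H_+=(h_{2+},h_{1+}), \qquad H_-=(h_{2-},h_{1-}).
\]
With this choice the first row is \eqref{eq:2.6Haug3}. The second row then follows from $\ol\alpha h_{1+}=h_{1-}$. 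Swapping the order of the entries of a corona pair does not affect membership in $\CP^\pm$, so $H_+\in\CP^+$ and $H_-\in\CP^-$ by hypothesis.

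\emph{Nonvanishing of $AH_+$.} This is the step needing a little care. We have
\[
AH_+=(G\theta h_{2+}-h_{1+},\ G\theta\ol\alpha h_{2+}).
\]
Consider two cases.
\begin{itemize}
\item If $h_{2+}\not\equiv 0$, then, being a nonzero $H^\infty$ function, $h_{2+}$ is nonzero a.e. on $\TT$. Since also $G\ne0$ a.e. and $|\theta\ol\alpha|=1$, the second component is nonzero a.e.
\item If $h_{2+}\equiv0$, the corona condition forces $h_{1+}$ to be bounded below on $\DD$, hence invertible in $H^\infty$ and nonzero a.e. on $\TT$. Then the first component $-h_{1+}$ is nonzero a.e.
\end{itemize}
In either case $AH_+(t)\ne0$ a.e.

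Theorem~\ref{thm:4.1Haug3} now gives injectivity of $AP^++BP^-$, and hence $\ker D^{\theta,\alpha}_G=\{0\}$. The only real work is the bookkeeping in matching the hypotheses: the divisibility $\alpha\le\theta$ is exactly what makes $f_+$ analytic, and the a.e. nonvanishing check is the one point that is not purely formal.
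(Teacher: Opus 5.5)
Your proposal is correct and follows the paper's own proof: apply Theorem~\ref{thm:4.1Haug3} to $AP^++BP^-$ with $H_+=(h_{2+},h_{1+})$ and $H_-=(h_{2-},h_{1-})$, then transfer injectivity to $D^{\theta,\alpha}_G$ through the kernel isomorphism. You also spell out hypotheses the paper leaves implicit. These are the factorizations $\det A=G\theta\ol\alpha$ (the paper writes ``$\det A=\theta\ol\alpha$'', dropping the factor $G$) and $\det B=-G$, with $\phi=G$, $f_+=\theta\ol\alpha$, $f_-=-1$, and the a.e.\ nonvanishing of $AH_+$, which your case split on $h_{2+}$ handles correctly.
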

  \beginpf
  We have $\det A=\theta\ol\alpha$, $\det B=-G$ for $A$, $B$, defined by 
  \eqref{eq:25A3aug}.  From \eqref{eq:2.6Haug3} we get
  \begin{align*}
  \begin{cases}
    G (\theta h_{2+} + h_{2-}) - h_{1+}&=0,\\
    \ol\alpha G \theta h_{2+} + \ol\alpha Gh_{2-} -h_{1-}&=0,
  \end{cases}
\end{align*}
that is, 
\[
A \begin{pmatrix} h_{2+} \\ h_{1+} \end{pmatrix} + B \begin{pmatrix} h_{2-} \\ h_{1-}
\end{pmatrix} =0,
\]
so, by Theorem~\ref{thm:4.1Haug3}, $AP^+ + BP^-$ is injective in $(L^2)^2$ and therefore
$D^{\theta,\alpha}_G$ is injective in $K^\perp_\theta$.  
  \endpf
  
  As an example, take $D^{\theta,\alpha}_G$ with $\alpha \le \theta$ and $G=\dfrac{h_{1+}}{c_1+\theta c_2}$, where
  $h_{1+} \in K_\alpha^\infty \setminus \{0\}$ and $c_1,c_2 \in \CC\setminus\{0\}$ with
  $|c_1| \ne |c_2|$. Then,
  for $h_{1-}=\ol\alpha h_{1+}$, $h_{2_+}=c_2$ and $h_{2-}=c_1$, the assumptions of Theorem~\ref{thm:4.1Iaug3} are satisfied and we deduce that
  $\ker D_G^{\theta,\alpha}=\{0\}$.\\


As we have seen for Toeplitz and truncated Toeplitz operators one can show from 
\eqref{eq:38} and \eqref{eq:25ajun23} that
 the fact that a given
function lies in the kernel of a DTTO gives further information on the kernel. 
This is related with the question of characterizing the symbols of DTTOs that
correspond to the zero operator.
We give some examples,
considering only the case $\alpha=\theta$.

\begin{thm}
(i) $\theta\in\ker D^\theta_G \iff G \in \ol \theta K_\theta \cap L^\infty \subset\ol z \ol{H^\infty}$.\\
(ii) If (i) holds and $G \ne 0$ then $\ker D^\theta_G = \theta K_{zI}$, where $\ol G= zIO$ is the inner-outer
factorization, with $I$ inner and $O$ outer.
\end{thm}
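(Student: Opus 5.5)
The plan is to work directly with the description \eqref{eq:37} of $\ker D^\theta_G$ (with $\alpha=\theta$). An element of $K_\theta^\perp$ is written $f=f_-+\theta\til f_+$ with $f_-\in H^2_-$ and $\til f_+\in H^2_+$. Then $f\in\ker D^\theta_G$ if and only if $Gf\in K_\theta$, that is, $Gf\in H^2_+$ and $\ol\theta Gf\in H^2_-$. Beyond this, the argument uses only Smirnov-class arguments of the type in Proposition~\ref{prop:2.2jul30}.

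For (i), take $f_-=0$ and $\til f_+=1$. Then $\theta\in\ker D^\theta_G$ if and only if $G\theta\in K_\theta$. Since $G\in L^\infty$, this says exactly that $G\in\ol\theta K_\theta\cap L^\infty$. For the inclusion, I use the conjugation identity $K_\theta=\theta\ol z\,\ol{K_\theta}$. It gives $\ol\theta K_\theta=\ol z\,\ol{K_\theta}\subset\ol z\,\ol{H^2_+}$, and intersecting with $L^\infty$ yields $\ol z\,\ol{H^\infty}$. I will also record that under (i) we have $G\theta\in K_\theta\cap L^\infty\subset H^\infty$. Moreover, when $G\ne0$ we have $\ol G\in zH^\infty$, so the factorisation $\ol G=zIO$ makes sense and $G\ne0$ a.e.

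For (ii), I first show that $f_-=0$ for every $f$ in the kernel. From $G(f_-+\theta\til f_+)=\psi_+\in H^2_+$ we get
\[
Gf_-=\psi_+-(G\theta)\til f_+ .
\]
The right-hand side lies in $H^1_+$, because $G\theta\in H^\infty$. The left-hand side equals $\ol z\,\ol I\,\ol O f_-$, which lies in $\ol z\,\ol{H^1_+}$ because $f_-\in H^2_-=\ol z\,\ol{H^2_+}$. Hence $Gf_-\in H^1_+\cap\ol z\,\ol{H^1_+}=\{0\}$. Since $G\neq0$ a.e., it follows that $f_-=0$. This cancellation is the key step, and it relies on the analyticity $G\theta\in H^\infty$ obtained from (i).

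It remains to handle $f=\theta\til f_+$. The condition $\ol\theta Gf\in H^2_-$ becomes $G\til f_+\in H^2_-$, i.e.\ $\til f_+\in\ker T_G$. The other condition, $\theta G\til f_+\in H^2_+$, is then automatic. Indeed, $\theta G\til f_+=(G\theta)\til f_+\in H^1_+\cap L^2=H^2_+$, since $G\til f_+\in L^2$.

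Finally, I identify $\ker T_G$ with $G=\ol z\,\ol I\,\ol O$, as in the proof of Proposition~\ref{prop:2.2jul30} with $\theta=1$. Suppose $\ol z\,\ol I\,\ol O\til f_+\in H^2_-$. Dividing by the outer factor $\ol O$ gives $\ol z\,\ol I\til f_+\in\ol z\,\ol{\GN^+}\cap L^2=H^2_-$, so $\til f_+\in K_{zI}$. Conversely, if $\til f_+\in K_{zI}$, then multiplying $\ol z\,\ol I\til f_+\in H^2_-$ by $\ol O\in\ol{H^\infty}$ keeps it in $H^2_-$. Hence $\ker T_G=K_{zI}$, and therefore $\ker D^\theta_G=\theta K_{zI}$.

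The only delicate points are checking integrability when applying the Smirnov-class arguments, and the $H^1_+\cap\ol z\,\ol{H^1_+}$ cancellation above.
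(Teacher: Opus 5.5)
Your proposal is correct and follows the paper's proof step by step. In (i) both arguments read off $G\theta\in K_\theta$ from \eqref{eq:37}. In (ii) both use the analyticity of $G\theta$ to force $Gf_-=0$, hence $f_-=0$, and then reduce to $\til f_+\in\ker T_G=K_{zI}$ via the inner--outer factorization $\ol G=zIO$. The only difference is that you supply the justifications the paper leaves implicit, namely the $H^1_+\cap\ol z\,\ol{H^1_+}=\{0\}$ cancellation and the Smirnov-class division by $\ol O$.
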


\beginpf
(i) From \eqref{eq:37} we have:
$\theta\in\ker D^\theta_g \iff G\theta \in K_\theta$; that is,
\begin{align*}
\left\{
\begin{aligned}
G\theta-\psi_+ &=0, \\
G- \psi_- & =0,
\end{aligned}
\right.
\end{align*}
with $\psi_\pm \in H^2_\pm$. That is, $G=\psi_-$ with
$\ol\theta \psi_+=\psi_-$. Equivalently, $G \in \ol\theta K_\theta \cap L^\infty \subset \ol z \ol{H^\infty}$.

(ii) 
We have  that $f=f_-+\theta f_+$ (with the usual notation) lies in $\ker D^\theta_G$  if and only if $Gf \in K_\theta$, i.e.,
\begin{align*} 
& \left\{
\begin{aligned}
G\theta f_+ -\psi_+  + Gf_- &=0,  \\
Gf_+ +  \ol\theta Gf_- - \psi_- &=0,
\end{aligned}
\right.\\
\iff
& \left\{ 
\begin{aligned}
G\theta f_+ -\psi_+   &=Gf_- \implies f_-=0, \\
Gf_+ &= \psi_- -   \ol\theta Gf_-  \in H^2_-   ,
\end{aligned}
\right.
\iff
&\left\{
\begin{aligned}
 f_- &=0, \\
G\theta f_+ &= \psi_+ \\
Gf_+ &= \psi_-
\end{aligned}
\right.
\end{align*}

That is, $G\theta f_+ \in H^2_+$ and $Gf_+ \in H^2_-$, so 
$\ol z \ol I f_+ \in H^2_-$, where $\ol G=zIO$
is the inner-outer factorization. Hence  
$f \in \theta K_{zI}$.

Conversely, if $f \in \theta  K_{zI}$, say $f=\theta f_+$ with $f_+ \in K_{zI}$, then for $G \in \ol\theta K_\theta$, we have $G f \in H^2_+$ and $\ol \theta G f \in H^2_-$, so $f \in \ker D^\theta_G$.
\endpf

We can derive another similar result by the same method.

\begin{thm}
(i) $\ol z\in\ker D^\theta_G \iff 
G \in z K_\theta \cap L^\infty \subset z H^\infty.$\\
(ii)  If (i) holds and $G \ne 0$ then 
$ \ker D^\theta_G = \ol z \ol I K_{zI} = \ol z \ol{K_{zI}}$, where $G=zIO$ is 
the inner-outer factorization with $I$ inner and $O$ outer.
\end{thm}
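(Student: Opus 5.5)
The plan is to follow the same route as in the previous theorem. By \eqref{eq:37} with $\alpha=\theta$, a function $f=f_-+\theta f_+\in K_\theta^\perp$ (with $f_\pm\in H^2_\pm$) lies in $\ker D^\theta_G$ if and only if $Gf\in K_\theta$. That is, $Gf\in H^2_+$ and $\ol\theta Gf\in H^2_-$.

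For (i), note that $\ol z\in H^2_-\subset K_\theta^\perp$. So $\ol z\in\ker D^\theta_G$ if and only if $G\ol z\in K_\theta$, i.e.\ $G\in zK_\theta$. Since $G$ is bounded, this means $G\in zK_\theta\cap L^\infty$. The inclusion $zK_\theta\cap L^\infty\subset zH^2_+\cap L^\infty=zH^\infty$ is immediate.

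For (ii), I first record a consequence of (i). Write $G=zk$ with $k\in K_\theta$. Then $\ol\theta k\in H^2_-=\ol z\,\ol{H^2_+}$, so $\ol\theta G=z\ol\theta k\in\ol{H^2_+}\cap L^\infty=\ol{H^\infty}$. Now take $f=f_-+\theta f_+$ in the kernel. The second condition reads
\[
Gf_+ + (\ol\theta G)f_-\in H^2_- .
\]
Here $(\ol\theta G)f_-\in\ol{H^\infty}H^2_-\subset H^2_-$, so $Gf_+\in H^2_-$. But $Gf_+\in zH^\infty H^2_+\subset H^2_+$, hence $Gf_+=0$, and since $G\neq0$ a.e.\ (it is a nonzero $H^\infty$ function) we get $f_+=0$.

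The first condition then reduces to $Gf_-\in H^2_+$. Write $f_-=\ol z\,\ol h$ with $h\in H^2_+$. Then $Gf_-=IO\ol h$, and $Gf_-\in H^2_+$ is equivalent to $I\ol h\in H^2_+$. For the forward direction, $I\ol h=Gf_-/O$ lies in $\GN^+\cap L^2=H^2_+$, because $O$ is outer. For the converse, if $I\ol h\in H^2_+$ then $O\cdot I\ol h\in H^\infty H^2_+\subset H^2_+$.

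Next I identify this condition with a model space. The condition $I\ol h\in H^2_+$ says $h\in H^2_+\cap I\,\ol{H^2_+}$. Since $K_u=H^2_+\cap u\ol z\,\ol{H^2_+}$ for an inner function $u$, this set is exactly $K_{zI}$. Hence
\[
\ker D^\theta_G=\ol z\,\ol{K_{zI}} .
\]
To get the other form, I apply the conjugation $h\mapsto zI\ol z\ol h=I\ol h$ on $K_{zI}$, which maps $K_{zI}$ onto itself. This gives $\ol z\,\ol I K_{zI}=\ol z\,\ol{K_{zI}}$. The converse inclusion holds because every step above is an equivalence, the second condition being automatic once $f_+=0$ by the remark on $\ol\theta G$.

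The main obstacle is the step forcing $f_+=0$. It hinges on seeing that (i) makes $\ol\theta G$ anti-analytic, so that the mixed term $\ol\theta G f_-$ drops out of the second condition. The outer-factor division in the first condition is a routine Smirnov class argument.
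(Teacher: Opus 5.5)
Your proof is correct and follows the paper's argument. You use (i) to get $\ol\theta G\in\ol{H^\infty}$, force $f_+=0$ from $Gf_+\in H^2_+\cap H^2_-$, then divide out the outer factor to identify $f_-$ with an element of $\ol z\,\ol{K_{zI}}$. The only differences are cosmetic: you parametrize $f_-=\ol z\,\ol h$ and use $K_{zI}=H^2_+\cap I\,\ol{H^2_+}$ rather than recognizing $\ker T_{\ol z\ol I}$, and you explicitly justify $\ol z\,\ol I K_{zI}=\ol z\,\ol{K_{zI}}$ via the conjugation $h\mapsto I\ol h$, which the paper leaves implicit.
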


\beginpf
(i)   $\ol z \in D^\theta_G \iff G\ol z  \in K_\theta$; that is,
\begin{align*}
\left\{
\begin{aligned}
G\ol z &=\psi_+, \\
\ol\theta G \ol z &= \psi_- ,
\end{aligned}
\right. \iff
\left\{
\begin{aligned}
G &=z\psi_+, \\
\ol\theta \psi_+ &= \psi_- ,
\end{aligned}
\right.
\end{align*}
with $\psi_\pm \in H^2_\pm$. Thus,
$G \in z K_\theta \cap L^\infty$.
The converse is clear.

(ii) Next, $f=f_- + \theta f_+$ (with the usual notation) lies in 
$\ker D^\theta_G$ if and only if
we have $Gf \in K_\theta$, i.e.,
\begin{align*}
\left\{
\begin{aligned}
Gf_- &=\phi_+, \\
\ol\theta G f_-+Gf_+ &= \phi_- ,
\end{aligned}
\right. \iff
\left\{
\begin{aligned}
Gf_- &=\phi_+, \\
\ol z\ol\theta G f_-+\ol z Gf_+ &= \ol z \phi_- ,
\end{aligned}
\right.
\end{align*}
with $\phi_\pm \in H^2_\pm$.
then, since $\ol z\ol\theta G \in \ol {H^\infty}$, we have $\ol z\ol \theta G f_- \in H^2_-$ and so $\ol z G  f_+ =0$ and $f=f_-$.  
Equivalently,
\begin{align*}
\left\{
\begin{aligned}
Gf_- &=\phi_+, \\
f_+ &= 0, \\
\theta \ol G \ol z \ol f_- &= \ol z \phi_- ,
\end{aligned}
\right.
\end{align*}

Thus $G f_- \in H^2_+$ and $\ol \theta G f_- \in H^2_-$. Let $G=zIO$ be an inner-outer factorization.
We have  
$z I  f_-\in H^2_+$ implying that $\ol z \ol I (\ol z \ol{f_-}) \in H^2_-$;
since   $\ol z \ol{f_-} \in H^2_+$ we have $\ol z \ol{f_-} \in K_{zI}$.

Once again, we can reverse these steps to see that every element of $\ol z \ol{K_{zI}}$ is in 
$\ker D^\theta_G$,
\endpf

An immediate consequence of the two previous theorems is the following, which shows
that there exist sets of functions that cannot be contained in the kernel of any DTTO that
is not the zero operator.

\begin{cor}
If  $\theta, \ol z \in \ker D^\theta_g$, then $D_g=0$.
\end{cor}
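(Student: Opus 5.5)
We combine part (i) of the two preceding theorems. Assume $\theta \in \ker D^\theta_g$ and $\ol z \in \ker D^\theta_g$, where $g\in L^\infty$ is the symbol. We will show $g=0$, and then $D^\theta_g = Q_\theta\, 0\, Q_\theta = 0$.

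By part (i) of the first theorem, $\theta\in\ker D^\theta_g$ gives $g \in \ol\theta K_\theta\cap L^\infty \subset \ol z\,\ol{H^\infty}$. Since $\ol z\,\ol{H^\infty}\subset H^2_-$, we have $g\in H^2_-$, so all its Fourier coefficients of nonnegative index vanish.

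By part (i) of the second theorem, $\ol z \in \ker D^\theta_g$ gives $g\in zK_\theta\cap L^\infty\subset zH^\infty$. Since $zH^\infty \subset zH^2_+$, we have $g\in H^2_+$, so all its Fourier coefficients of negative index vanish.

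Hence $g\in H^2_+\cap H^2_- = \{0\}$, i.e.\ every Fourier coefficient of $g$ is zero, so $g=0$ and $D^\theta_g=0$.

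There is no real obstacle here. The only point to check is the inclusions $\ol\theta K_\theta \subset \ol z\,\ol{H^2_+}$ and $zK_\theta\subset zH^2_+$, which the two theorems already state. The first follows because $\ol\theta K_\theta = \ol{z}\,\ol{K_\theta}$ via the standard conjugation $f\mapsto \theta\ol{z f}$ on $K_\theta$. One could alternatively argue with parts (ii): they give $\ker D^\theta_g\subset \theta H^2_+$ and $\ker D^\theta_g\subset H^2_-$, whose intersection is $\{0\}$, contradicting that it contains $\theta$ if $g\neq 0$. The coefficient argument above is more direct.
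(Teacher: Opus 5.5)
Your proof is correct and follows the paper's intended route: the paper states this corollary as an immediate consequence of the two preceding theorems. Combining their parts (i) to get $g\in \ol z\,\ol{H^\infty}\cap zH^\infty\subset H^2_-\cap H^2_+=\{0\}$ is exactly that consequence, and your alternative via parts (ii) is equally valid.
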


From this we get a known result, which is presented here in the light of
the concept of EAE.

\begin{cor}
$D^\theta_G=0 \iff G=0$.
\end{cor}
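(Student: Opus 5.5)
The direction $G=0 \Rightarrow D^\theta_G=0$ is immediate, since then $Q_\theta G Q_\theta$ is the zero operator on $K_\theta^\perp$. For the converse, the plan is to go through kernels, which is where the EAE viewpoint applies: $D^\theta_G=0$ means $\ker D^\theta_G = K_\theta^\perp$. This kernel characterisation is the only information we use, because, unlike the kernel, the property of being zero is not preserved under EAE.

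Assume $D^\theta_G=0$. Then every element of $K_\theta^\perp = H^2_-\oplus\theta H^2_+$ lies in $\ker D^\theta_G$. In particular both test functions $\theta=\theta\cdot 1\in\theta H^2_+$ and $\ol z\in H^2_-$ lie in the kernel. The preceding corollary (if $\theta,\ol z\in\ker D^\theta_G$ then $D^\theta_G=0$) is not the direction we need, so we unpack its proof through the two preceding theorems.

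From part (i) of the first theorem, $\theta\in\ker D^\theta_G$ gives $G\in\ol\theta K_\theta\cap L^\infty\subset \ol z\,\ol{H^\infty}$. From part (i) of the second theorem, $\ol z\in\ker D^\theta_G$ gives $G\in zK_\theta\cap L^\infty\subset zH^\infty$. Hence
\[
G\in zH^\infty\cap \ol z\,\ol{H^\infty}\subset H^2_+\cap H^2_-=\{0\},
\]
so $G=0$.

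There is an alternative route that avoids the intersection argument. Suppose $G\neq 0$. Then part (ii) of the first theorem gives $\ker D^\theta_G=\theta K_{zI}\subset\theta H^2_+$. This kernel cannot contain $\ol z\in H^2_-$, contradicting $\ker D^\theta_G=K_\theta^\perp$.

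The only point that needs care is checking the inclusions in the two part (i) statements, namely $\ol\theta K_\theta\subset \ol z\,\ol{H^2_+}$ and $zK_\theta\subset zH^2_+$. Both follow directly from $K_\theta=H^2_+\cap\theta\ol z\,\ol{H^2_+}$. So no step here is a real obstacle; the argument is a direct combination of the two previous theorems.
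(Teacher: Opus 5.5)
Your proof is correct and follows the paper's argument. The paper derives this corollary from the preceding one, whose proof is exactly your observation: $\theta,\ol z\in K_\theta^\perp$ lie in the kernel, so parts (i) of the two preceding theorems force $G\in \ol z\,\ol{H^\infty}\cap zH^\infty=\{0\}$. You are right to read off $G=0$ from that argument rather than from the literal conclusion ``$D_g=0$'', and your alternative route through part (ii) of the first theorem is also valid.
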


These results can be best understood with an example. Let $\theta(z)=z^2$ so that
$K_\theta= \spam\{1,z\}$.
With $G(z)=\ol z$ we have $\ker D^\theta_g = \CC z^2$. Whereas, with $G(z)=z^2$
we have $\ker D^\theta_g = \spam \{\ol z,\ol z^2 \}$.

\section{Multiband truncated Toeplitz operators}

As explained in detail in \cite{COP22}, multiband signals occur in speech processing, as an alternative to
the Paley--Wiener spaces of functions with Fourier transforms lying in $L^2(-b,b)$, for $b>0$, when
low as well as high frequencies are to be excluded. More recently, dual band filters have become key 
components in communication devices such as cell phones.

Restricting  to dual band operators, we let $\theta$ be an inner function in the Hardy space $H^\infty(\CC^+)$ of the upper half-plane, and let
$\phi$ and $\psi$ be unimodular functions in $L^\infty(\RR)$ such that
the subspaces $\phi K_\theta$ and $\psi K_\theta$ are orthogonal in $L^2(\RR)$.
Here $K_\theta= H^2(\CC^+) \ominus \theta H^2(\CC^+)$ as we are working in the 
upper halfplane rather than the disc.

The motivating example is $\theta(s)=e^{i(b-a)s}$ with $b>a>0$ and with
$\phi(s)=e^{-ibs}$ and $\psi(s)=e^{ias}$. This can be seen to correspond to the inverse Fourier transform
of $L^2((-b,-a) \cup L^2(a,b))$.

\begin{prop}\cite[Prop.~2.1]{COP22}
With the notation above, $\phi K_\theta$ and $\psi K_\theta$ are orthogonal spaces if and only if $A^\theta_{\ol\phi\psi}=0$.
\end{prop}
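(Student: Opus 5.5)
The plan is to unwind both conditions into statements about inner products and compare them. Since $\phi$ and $\psi$ are unimodular, multiplication by either is unitary on $L^2(\RR)$. Hence $\phi K_\theta \perp \psi K_\theta$ if and only if
\[
\langle \phi f, \psi g\rangle = \langle \ol\psi\phi f, g\rangle = 0 \qquad \hbox{for all } f,g \in K_\theta .
\]
Taking complex conjugates, this is equivalent to $\langle \ol\phi\psi g, f\rangle =0$ for all $f,g\in K_\theta$.

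Next I use the definition of the truncated Toeplitz operator. Its symbol $\ol\phi\psi$ is unimodular, hence in $L^\infty(\RR)$, so $A^\theta_{\ol\phi\psi}$ is bounded and defined on all of $K_\theta$. For $g,f \in K_\theta$, self-adjointness of $P_\theta$ and $P_\theta f = f$ give
\[
\langle A^\theta_{\ol\phi\psi} g, f\rangle = \langle P_\theta(\ol\phi\psi g), f\rangle = \langle \ol\phi\psi g, f\rangle .
\]
So the orthogonality condition says exactly that $\langle A^\theta_{\ol\phi\psi} g, f\rangle = 0$ for all $f,g\in K_\theta$.

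Since $A^\theta_{\ol\phi\psi}g$ lies in $K_\theta$, choosing $f = A^\theta_{\ol\phi\psi}g$ shows this holds precisely when $A^\theta_{\ol\phi\psi}=0$. The converse direction follows by reading the same chain of equivalences backwards.

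I do not expect a real obstacle here. The only points to check are the order of the arguments in the inner products, so that the symbol is $\ol\phi\psi$ and not $\phi\ol\psi$ (either works, since $A^\theta_{\ol\psi\phi}$ is the adjoint of $A^\theta_{\ol\phi\psi}$), and that the half-plane setting changes nothing. One could also phrase the argument through the kernel: $\ker A^\theta_{\ol\phi\psi}=K_\theta$, which is the kernel criterion used for the vanishing theorem in Section~\ref{sec:3}. The direct inner-product argument is shorter, so I would use it.
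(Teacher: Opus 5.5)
Your proof is correct: for $f,g\in K_\theta$ you have $\langle \phi f,\psi g\rangle=\overline{\langle A^\theta_{\ol\phi\psi}g,f\rangle}$, so the orthogonality of $\phi K_\theta$ and $\psi K_\theta$ is exactly the vanishing of all inner products $\langle A^\theta_{\ol\phi\psi}g,f\rangle$ on $K_\theta$, and the unimodularity of $\ol\phi\psi$ ensures the operator is bounded and defined on all of $K_\theta$. The paper gives no proof here; it only cites \cite{COP22} and then invokes Sarason's criterion $\ol\phi\psi\in\theta H^2+\ol\theta\,\ol{H^2}$ to describe when the operator vanishes, so your direct inner-product computation is a self-contained justification that does not need that criterion.
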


Recall that $A^\theta_{\ol\phi\psi}=0 \iff \ol\phi\psi \in \theta H^2 + \ol\theta \ol{H^2}$
by \cite[Thm.~3.1]{sarason07}. Under this condition, we see that 
\[
f^M \in \phi K_\theta \oplus \psi K_\theta \iff f^M = \phi f_{\theta,1} + \psi f_{\theta,2},
\]
where 
\[
 f_{\theta,1} = P_\theta \ol\phi f_M \qquad \hbox{and} \qquad 
 f_{\theta,2} = P_\theta \ol \psi f_M
 \]
 and the orthogonal projection $P_M$ from $L^2$ onto $M=\phi K_\theta \oplus \psi K_\theta$ is given by
 \[
 P_M f = \phi P_\theta \ol \phi f + \psi P_\theta \ol \psi f  \qquad (f \in L^2).
 \]

 If $g \in L^2$, the truncated Toeplitz operator $A^M_g$ is densely defined in $M$ by
 \[
 A_g^M f = P_M gf \qquad (f \in L^\infty \cap M),
 \]
 the density of $L^\infty \cap M$ following from the density
 of $L^\infty \cap K_\theta$ in $K_\theta$. 
 If this operator is bounded, we
 also denote by $A^M_g$ its unique bounded extension to $M$.

It can be shown that $A^M_g$ is unitarily equivalent to the block truncated Toeplitz operator
\[
W = \begin{pmatrix}
A^\theta_g & A^\theta_{\ol\phi \psi g} \\
A^\theta_{\ol\psi \phi g} & A^\theta_g
\end{pmatrix},
\]
and hence it is the zero operator if and only if each of the four truncated Toeplitz operators
composing $W$ are zero.

 Let us study the kernel of $A^M_g$ as we have done in the previous
 sections for Toeplitz operators, ATTOs and DTTOs. We have that, for $f \in M$,
 \[
 A^M_g f =0 \iff \phi P_\theta \ol\phi f + \psi P_\theta \ol\psi g f=0.
 \]
 Since $\phi K_\theta \perp \psi K_\theta$, this is equivalent to
 \[
 P_\theta \ol\phi gf=0 \qquad \hbox{and} \qquad P_\theta \ol\psi gf=0.
 \]
 Equivalently,
\[
 P_\theta \ol\phi g (\phi f_{\theta,1}+ \psi f_{\theta,2}) =0  \qquad\hbox{and} \qquad
 P_\theta \ol\psi g(\phi f_{\theta,1}+ \psi f_{\theta,2}) =0
\]
 with $f_{\theta,1}=P_\theta\ol\phi f_M$ and $f_{\theta,2}=P_\theta\ol\psi f_M$.
 This in turn is equivalent to
 \beq\label{eq:6aug3}
 P_\theta g f_{\theta,1}+ P_\theta \ol\phi \psi g f_{\theta,2}=0
 \qquad \hbox{and} \qquad
 P_\theta \ol\psi \phi g f_{\theta,1} + P_\theta g  f_{\theta,2}=0.
 \eeq
 Since $f_{\theta,1},f_{\theta,2} \in K_\theta$, we have that
 $f_{\theta,1} = f_{1+}$ and $f_{\theta,2}=f_{2+}$ with $f_{1+},f_{2+} \in H^2_+$ and 
 $\ol\theta f_{1+},\ol\theta f_{2+} \in H^2_-$.
 Thus \eqref{eq:6aug3} is equivalent to 
 
 \begin{equation*}
  \begin{cases}
    \ol\theta f_{1+}= f_{1-} \\
    \ol\theta f_{2+}=f_{2-} \\
    gf_{1+}+g\ol\phi \psi   f_{2+} + \theta f_{3+} = f_{3-} \\
    g\phi\ol\psi f_{1+} + gf_{2+} + \theta f_{4+}= f_{4-},
  \end{cases}
\end{equation*}
with $f_{i\pm} \in H^2_\pm$ for $i=1,2,3,4$,
which, in matrix form, can be written as
\beq\label{eq:8aug3}
\G \Phi_+=\Phi_- \qquad \hbox{with} \qquad \Phi_\pm \in (H^2_\pm)^4,
\eeq
where 
\[
\G = \begin{pmatrix}
\ol\theta & 0 & 0 & 0 \\
0 & \ol\theta & 0 & 0 \\
g & g \ol\phi \psi & \theta & 0 \\
g\phi \ol\psi & g & 0 & \theta
\end{pmatrix}= \begin{pmatrix} \ol\theta I_2 & 0 \\ G & \theta I_2 \end{pmatrix}, 
\]
with 
\[
G= g \begin{pmatrix}1 & \ol\phi\psi \\ \phi\ol\psi & 1 \end{pmatrix}.
\]
The equation \eqref{eq:8aug3} describes the kernel of the block Toeplitz operator $T_\G$.

One can show, as in Section~\ref{sec:3} for TTO, that this Toeplitz operator is 
EAE to the block TTO $A^\theta_G$. Indeed we have:

\begin{thm}
\cite{COP22}
\[
A^M_g \simast A^\theta_G \simast T_\G.
\]
\end{thm}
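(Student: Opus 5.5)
The claim splits into two parts: $A^M_g \simast A^\theta_G$, and $A^\theta_G \simast T_\G$.

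\emph{First relation.} We show that $A^M_g$ is unitarily equivalent to the block truncated Toeplitz operator $A^\theta_G$ on $K_\theta^2 = K_\theta\oplus K_\theta$. Define
\[
U: K_\theta\oplus K_\theta \to M,\qquad U(f_1,f_2)=\phi f_1+\psi f_2.
\]
Since $\phi,\psi$ are unimodular and $\phi K_\theta\perp\psi K_\theta$, the map $U$ is unitary. Its inverse is $U^{-1}f=(P_\theta\ol\phi f, P_\theta\ol\psi f)$.

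Using the formula $P_M f=\phi P_\theta\ol\phi f+\psi P_\theta\ol\psi f$, we compute
\[
U^{-1}A^M_g U(f_1,f_2)=\bigl(P_\theta g f_1+P_\theta \ol\phi\psi g f_2,\; P_\theta\phi\ol\psi g f_1+P_\theta g f_2\bigr).
\]
This is exactly $W=A^\theta_G$ with $G=g\begin{pmatrix}1&\ol\phi\psi\\ \phi\ol\psi&1\end{pmatrix}$. For unbounded $g\in L^2$ the computation is first done on the dense set $L^\infty\cap M$, which $U$ maps onto the corresponding dense set in $K_\theta^2$. Unitary equivalence is a special case of equivalence, and hence of EAE, with trivial extension spaces.

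\emph{Second relation.} We use the vectorial generalisation of Theorem~\ref{thm:1} recorded in Remark~\ref{rem:3.6}, namely O'Loughlin's result, with the $2\times 2$ matricial inner function $\Theta=\theta I_2$. Then $K_\Theta=K_\theta\oplus K_\theta$ and $\Theta^*=\ol\theta I_2$. That result gives
\[
A^\Theta_G\simast T_\G, \qquad \G=\begin{pmatrix}\ol\theta I_2&0\\ G&\theta I_2\end{pmatrix},
\]
which is precisely the $4\times4$ symbol obtained from \eqref{eq:8aug3}.

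If one prefers not to rely on the remark, the scalar proof of Theorem~\ref{thm:1} carries over verbatim, replacing each scalar projection by its diagonal $2\times 2$ version. The first step is $A^\theta_G\simast P_\theta GP_\theta+Q_\theta$ on $(H^2_+)^2$, with extension space $\theta (H^2_+)^2$. The second step is $P_\theta G P_\theta+Q_\theta\simast T_\G$. The factorisations in both steps use only $P_\theta Q_\theta=0$, $Q_\theta=\theta P^+\ol\theta$ and $T_{\ol\theta}T_\theta=I$, and all of these hold entrywise for diagonal $\theta I_2$.

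Finally, transitivity of EAE (compose the block-diagonal equivalences, enlarging the extension spaces as needed) gives $A^M_g\simast T_\G$.

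\emph{Main obstacle.} The bounded case is straightforward. The only delicate point is when $g\in L^2$ makes $A^M_g$ merely densely defined. Here I would assume boundedness, as the theorem implicitly does since $T_\G$ is considered. This forces each entry of $W$ to be bounded, and hence, by Sarason's theory, to have a bounded symbol modulo $\theta H^2+\ol\theta\ol{H^2}$. One may then replace $G$ by an $L^\infty$ symbol with the same truncated Toeplitz operator before invoking the EAE.
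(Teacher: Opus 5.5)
Your argument is correct for $g\in L^\infty$ and follows the route the paper indicates. The unitary $U(f_1,f_2)=\phi f_1+\psi f_2$ identifies $A^M_g$ with $W=A^\theta_G$, and the block version of Theorem~\ref{thm:1} (O'Loughlin's result in Remark~\ref{rem:3.6} with $\Theta=\theta I_2$) then gives $A^\theta_G\simast T_\G$.

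You should, however, drop your closing paragraph. It is false in general that a bounded truncated Toeplitz operator has a bounded symbol: Sarason's question was answered negatively by Baranov, Chalendar, Fricain, Mashreghi and Timotin. Replacing the four entries' symbols separately would also destroy the structure of $\G$. The statement only makes sense for $g\in L^\infty$ anyway, since $T_\G$ is bounded only then.
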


\section{Conclusion}

We have seen that EAE gives more information than its initial structure would suggest.
Besides its use in the analysis of properties such as invertibility and the Fredholm condition, it has
yielded results showing how the kernel of a (generalized) Toeplitz operator 
can be described starting from the knowledge that certain functions are   contained in
it.

In each case that we have studied,
EAE has arisen starting from a natural isomorphism between kernels. It would be interesting to know for which other classes of operators similar EAE constructions can be found by these methods.

\end{document}